
\documentclass{article}

\usepackage{microtype}
\usepackage{graphicx}
\usepackage{subfigure}
\usepackage{booktabs} 

\usepackage{hyperref}



\usepackage[accepted]{icml2024}

\usepackage{amsmath}
\usepackage{amssymb}
\usepackage{mathtools}
\usepackage{amsthm}

\usepackage[capitalize,noabbrev]{cleveref}

\theoremstyle{plain}
\usepackage{microtype}
\usepackage{graphicx}
\usepackage{subfigure}
\usepackage{booktabs} 
\usepackage{amsmath}
\usepackage{amssymb}
\usepackage{mathtools}
\usepackage{amsthm,fancybox}
\usepackage{paralist}
\usepackage{multirow}
\RequirePackage{algorithm}
\RequirePackage{algorithmic}

\newtheorem{theorem}{Theorem}
\newtheorem{cor}{Corollary}

\newtheorem{lemma}{Lemma}
\newtheorem{ass}{Assumption}
\newtheorem{definition}[cor]{Definition}
\newcommand{\Norm}[1]{\left\|#1\right\|}

\def \E {\mathbb{E}}
\def \R {\mathbb{R}}

\def \r {\mathbf{r}}

\def \u {\mathbf{u}}
\def \v {\mathbf{v}}
\def \s {\mathbf{s}}
\def \w {\mathbf{w}}

\def \F {\mathcal{F}}
\def \G {\mathcal{G}}
\def \z {\mathbf{z}}
\def \x {\mathbf{x}}
\def \y {\mathbf{y}}

\def \LL {\mathcal{L}}

\def \X {\mathcal{X}}
\usepackage[textsize=tiny]{todonotes}

\icmltitlerunning{Projection-Free Variance Reduction Methods for Stochastic Constrained Multi-Level Compositional Optimization}

\begin{document}

\twocolumn[
\icmltitle{Projection-Free Variance Reduction Methods for Stochastic Constrained Multi-Level Compositional Optimization}




\begin{icmlauthorlist}
\icmlauthor{Wei Jiang}{nju}
\icmlauthor{Sifan Yang}{nju,njuai}
\icmlauthor{Wenhao Yang}{nju,njuai}
\icmlauthor{Yibo Wang}{nju,njuai}
\icmlauthor{Yuanyu Wan}{zju,nju}
\icmlauthor{Lijun Zhang}{nju,njuai}
\end{icmlauthorlist}

\icmlaffiliation{nju}{National Key Laboratory for Novel Software Technology, Nanjing University, Nanjing, China}
\icmlaffiliation{njuai}{School of Artificial Intelligence, Nanjing University, Nanjing, China}
\icmlaffiliation{zju}{School of Software Technology, Zhejiang University, Ningbo, China}

\icmlcorrespondingauthor{Lijun Zhang}{zhanglj@lamda.nju.edu.cn}

\icmlkeywords{Machine Learning, ICML}

\vskip 0.3in
]



\printAffiliationsAndNotice{}  

\begin{abstract}
This paper investigates projection-free algorithms for stochastic constrained multi-level  optimization. In this context, the objective function is a nested composition of several smooth functions, and the decision set is closed and convex. Existing projection-free algorithms for solving this problem suffer from two limitations: 1) they solely focus on the gradient mapping criterion and fail to match the optimal sample complexities in unconstrained settings; 2) their analysis is exclusively applicable to non-convex functions, without considering convex and strongly convex objectives. To address these issues, we introduce novel projection-free variance reduction algorithms and analyze their complexities under different criteria. For gradient mapping, our complexities improve existing results and match the optimal rates for unconstrained problems. For the widely-used Frank-Wolfe gap criterion, we provide theoretical guarantees that align with those for single-level problems. Additionally, by using a stage-wise adaptation, we further obtain complexities for convex and strongly convex functions. Finally, numerical experiments on different tasks demonstrate the effectiveness of our methods.
\end{abstract}

\section{Introduction}
In this paper, we consider projection-free algorithms for  stochastic constrained multi-level compositional optimization in the form of
\begin{equation} \label{prob:1}
\min_{\x\in \X} F(\x) = f_K \circ f_{K-1} \circ \cdots \circ f_1(\x),
\end{equation}
where $\X$ is a closed convex set. We assume that each function $f_i$ and its gradient can only be accessed through noisy estimations, symbolized as $f_i(\cdot;\xi)$ and $\nabla f_i(\cdot;\xi)$ such that
\begin{align*}
\E_\xi \left[ f_i(\cdot;\xi) \right] = f_i(\cdot) \quad \text{and} \quad \E_\xi \left[\nabla f_i(\cdot;\xi) \right] = \nabla f_i(\cdot),
\end{align*}
where $\xi$ denotes samples drawn from the oracle. Problem~(\ref{prob:1}) finds wide applications in machine learning tasks, such as reinforcement learning~\citep{Dann2014PolicyEW}, government planning~\citep{Bruno2016RiskNA}, risk management~\citep{ Dentcheva2015StatisticalEO}, model-agnostic meta-learning~\citep{Ji2020MultiStepMM}, robust learning~\citep{li2021tilted}, risk-averse portfolio optimization~\citep{Shapiro2009LecturesOS}, and graph neural network training~\citep{balasubramanian2020stochastic}.

Although stochastic multi-level optimization has been investigated extensively in recent years~\citep{Yang2019MultilevelSG,balasubramanian2020stochastic,Zhang2021MultiLevelCS,chen2021solving,jiang2022optimal}, current work mainly focuses on unconstrained problems, i.e., $\X = \R^d$. For many practical problems, such as risk-averse portfolio optimization, the decision set is constrained~(e.g., the decision variable $\x$ should be in a simplex for portfolio optimization). Traditional constrained optimization typically employs a projection operation to ensure that the solutions are within the decision set. However, projection is usually complicated and time-consuming, and existing literature~\citep{NEURIPS2022_7e16384b} begin to show interest in developing projection-free algorithms for constrained multi-level problems by replacing projection~(a convex optimization problem) with multiple steps of more efficient linear minimization operation.

\begin{table*}[t]
\caption{Summary of results for projection-free algorithms under three different criteria: the Frank-Wolfe gap~(FG), gradient mapping~(GM), and optimal gap~(OG). Here CVX represents convex functions and SC stands for $\lambda$-strongly convex functions. We compare our methods with 1-SFW~\citep{Zhang2019OneSS},SPIDER-FW~\citep{pmlr-v97-yurtsever19b}, NCGS~\citep{pmlr-v80-qu18a}, SGD+ICG~\citep{Balasubramanian2018ZerothOrderNS}, LiNASA+ICG~\citep{NEURIPS2022_7e16384b}, and SCGS~\citep{doi:10.1137/140992382}.}
\label{t1}
\vskip 0.15in
\begin{center}
\begin{small}
\begin{tabular}{lcccccr}
\toprule
Method & Criterion & Assumptions & Level & Batch size & SFO & LMO \\
\midrule
1-SFW    & FG &Smooth & $1$ &  $1$ &$\mathcal{O}\left(\epsilon^{-3}\right)$ & $\mathcal{O}\left(\epsilon^{-3}\right)$\\
SPIDER-FW   & FG& Smooth & $1$ &  $\mathcal{O}\left(\epsilon^{-1}\right)$ &$\mathcal{O}\left(\epsilon^{-3}\right)$ & $\mathcal{O}\left(\epsilon^{-2}\right)$ \\
\textbf{Theorem~\ref{thm:main}} & FG &Smooth&   $K$ & $1$ &$\mathcal{O}\left(\epsilon^{-3}\right)$ & $\mathcal{O}\left(\epsilon^{-3}\right)$\\
\textbf{Theorem~\ref{thm:main_0}} & FG &Smooth&   $K$ & $\mathcal{O}\left(\epsilon^{-1}\right)$ &$\mathcal{O}\left(\epsilon^{-3}\right)$ & $\mathcal{O}\left(\epsilon^{-2}\right)$\\
\midrule
NCGS & GM &Smooth &  $1$ &$\mathcal{O}\left(\epsilon^{-1}\right)$ &$\mathcal{O}\left(\epsilon^{-2}\right)$ & $\mathcal{O}\left(\epsilon^{-2}\right)$\\
SGD+ICG  & GM & Smooth &  $1$ & $\mathcal{O}\left(\epsilon^{-1}\right)$ &$\mathcal{O}\left(\epsilon^{-2}\right)$ & $\mathcal{O}\left(\epsilon^{-2}\right)$\\
LiNASA+ICG   & GM &Smooth &  $K$ & 1 &$\mathcal{O}\left(\epsilon^{-2}\right)$ & $\mathcal{O}\left(\epsilon^{-3}\right)$ \\
\textbf{Theorem~\ref{thm:2}}  & GM & Smooth & $K$ & $1$ &$\mathcal{O}\left(\epsilon^{-1.5}\right)$ & $\mathcal{O}\left(\epsilon^{-2.5}\right)$\\
\textbf{Theorem~\ref{thm:2_0}}  & GM & Smooth & $K$ & $\mathcal{O}\left(\epsilon^{-0.5}\right)$ &$\mathcal{O}\left(\epsilon^{-1.5}\right)$ & $\mathcal{O}\left(\epsilon^{-2}\right)$\\
\midrule
1-SFW & OG &Smooth+CVX &   $1$ & $1$ &$\mathcal{O}\left(\epsilon^{-2}\right)$ & $\mathcal{O}\left(\epsilon^{-2}\right)$ \\
SPIDER-FW & OG &Smooth+CVX &   $1$ & $\mathcal{O}\left(\epsilon^{-1}\right)$ &$\mathcal{O}\left(\epsilon^{-2}\right)$ & $\mathcal{O}\left(\epsilon^{-1}\right)$ \\
\textbf{Theorem~\ref{thm:3}} & OG &Smooth+CVX &   $K$ & $1$ &$\mathcal{O}\left(\epsilon^{-2}\right)$ & $\mathcal{O}\left(\epsilon^{-2}\right)$ \\
\textbf{Theorem~\ref{thm:3_0}} & OG &Smooth+CVX &   $K$ & $\mathcal{O}\left(\epsilon^{-1}\right)$ &$\mathcal{O}\left(\epsilon^{-2}\right)$ & $\mathcal{O}\left(\epsilon^{-1}\right)$ \\
\midrule
SCGS
& OG &Smooth+SC &   $1$ & $\mathcal{O}\left(\epsilon^{-1}\right)$ &$\mathcal{O}\left(\lambda^{-1}\epsilon^{-1}\right)$ & $\mathcal{O}\left(\epsilon^{-1}\right)$ \\
\textbf{Theorem~\ref{thm:4}} & OG &Smooth+SC &   $K$ & $1$ &$\mathcal{O}\left(\lambda^{-1}\epsilon^{-1}\right)$ & $\mathcal{O}\left(\epsilon^{-2}\right)$ \\
\textbf{Theorem~\ref{thm:4_0}}
& OG &Smooth+SC &   $K$ & $\mathcal{O}\left(\epsilon^{-1}\right)$ &$\mathcal{O}\left(\lambda^{-1}\epsilon^{-1}\right)$ & $\mathcal{O}\left(\epsilon^{-1}\right)$ \\
\bottomrule
\\
\end{tabular}
\end{small}
\end{center}
\vskip -0.1in
\end{table*}
Projection-free methods typically require two oracles: 1) the Stochastic First-order Oracle~(SFO), which takes a point $\x$ and returns the pair $(f(\x;\xi),\nabla f(\x;\xi))$, where $\xi$ is a sample drawn from the oracle; 2) the Linear Minimization Oracle~(LMO), which takes a direction $\mathbf{d}$ and outputs $\arg \min_{\x \in \X} \left\langle \x , \mathbf{d} \right\rangle $. To evaluate different projection-free algorithms, the most widely used measures are the number of calls to SFO and LMO required to attain an acceptable solution. For non-convex functions, such a solution $\x$ is usually defined by the Frank-Wolfe gap~\cite{lacostejulien2016convergence}, formalized as 
\begin{align}\label{FW}
    \F(\x) \coloneqq \max_{\hat{\x} \in \X}\langle\hat{\x}-\x,-\nabla F(\x)\rangle \leq \epsilon,
\end{align}
where $\epsilon$ is a small value. More recently, the gradient mapping criterion~\citep{pmlr-v80-qu18a} has been introduced, which is denoted as \begin{align}\label{GM}
    \G(\x) \coloneqq \Norm{\beta\left(\x-\Pi_{\mathcal{X}}\left(\x-\frac{1}{\beta}\nabla F(\x)\right)\right)}^2\leq \epsilon,
\end{align}
where $\Pi_{\X}$ denote the projection onto the domain $\X$ and parameter $\beta$ is a positive constant. Notably, if $\X=\R^d$, this gradient mapping criterion simplifies to the stationary point, i.e., $\E\left[\Norm{\nabla F(\x)}^2\right]\leq \epsilon$, which is the standard metric for stochastic unconstrained problems. When the objective function is convex or strongly convex, the optimal gap criterion is employed instead, expressed as 
\begin{align}\label{OP}
    F(\x)-\min_{\hat{\x} \in \X} F(\hat{\x})\leq \epsilon,
\end{align}
which measures the difference between the objective and the optimal value.

The current method for stochastic projection-free multi-level compositional optimization, named as LiNASA+ICG~\citep{NEURIPS2022_7e16384b}, integrates the linearized NASA~\citep{Ghadimi2020AST} algorithm with inexact conditional gradient~\citep{Balasubramanian2018ZerothOrderNS}, and provides theoretical guarantees under the gradient mapping criterion. This method can identify an acceptable point with $\mathcal{O}(\epsilon^{-2})$ calls to SFO and $\mathcal{O}(\epsilon^{-3})$ calls to LMO. However, it still suffers from several drawbacks. Firstly, its SFO complexity does not match the existing optimal rate of $\mathcal{O}(\epsilon^{-1.5})$ for stochastic unconstrained problems. Secondly, its analysis solely concentrates on gradient mapping, and results for the more widely used Frank-Wolfe gap criterion are not provided. Finally, this method is restricted to non-convex objective functions, and it is unclear how to improve the rate for convex and strongly convex objectives.

To address these issues, we propose new algorithms that utilize the variance reduction estimator STORM~\citep{cutkosky2019momentum} to obtain more accurate evaluations of both the inner function values and the overall gradient. By integrating these estimators with a specifically designed Frank-Wolfe algorithm~\citep{pmlr-v28-jaggi13}, we improve the rate for gradient mapping and are able to analyze the Frank-Wolfe gap. Besides, by employing a large batch size, we can reduce the iteration numbers and thus improve the LMO complexities while maintaining the same SFO rates. Moreover, we develop a stage-wise algorithm with a warm-start technique and provide theoretical guarantees for both convex and strongly convex functions. Compared with previous methods, this paper makes the following contributions:
\begin{compactenum}
\item We establish the first theoretical guarantees for the Frank-Wolfe gap~(Theorem~\ref{thm:main},~\ref{thm:main_0}) under the multi-level setting. The rates we obtained match the results for single-level projection-free methods~\citep{Zhang2019OneSS,pmlr-v97-yurtsever19b}.

\vspace*{0.05in}\item For gradient mapping~(Theorem~\ref{thm:2},~\ref{thm:2_0}), our approach achieves an improved SFO complexity of $\mathcal{O}(\epsilon^{-1.5})$ and LMO complexity of $\mathcal{O}(\epsilon^{-2.5})$. The SFO complexity matches the low bound~\citep{Arjevani2019LowerBF}, and the LMO complexity can be further reduced to $\mathcal{O}(\epsilon^{-2})$ by using large batch sizes.

\vspace*{0.05in}\item We explore the complexities for convex~(Theorem~\ref{thm:3},~\ref{thm:3_0}) and strongly convex functions~(Theorem~\ref{thm:4},~\ref{thm:4_0}), and derive the optimal SFO rates for these problems, which have not been studied in previous projection-free multi-level literature.
\end{compactenum}

We compare our theoretical results with existing methods in Table~\ref{t1}, and validate the effectiveness of our method through numerical experiments in Section~\ref{sec:4}.

\section{Related Work}
This section briefly reviews related work on stochastic multi-level compositional optimization and stochastic projection-free algorithms.

\subsection{Stochastic Multi-Level Compositional Optimization}
Stochastic Compositional Optimization has been explored extensively in the literature, and most research focuses on the two-level settings~\cite{wang2017stochastic,DBLP:journals/jmlr/WangLF17,Ghadimi2020AST,Zhang2019ASC,chen2021solving,qi2021online,jiang2022multiblocksingleprobe,ICML:2023:Jiang,yu2024efficient}. The problem of multi-level compositional optimization was first investigated by \citet{Yang2019MultilevelSG}. Inspired by multi-timescale stochastic approximation~\cite{wang2017stochastic}, they introduced the multi-level stochastic gradient method, which achieves a sample complexity of $\mathcal{O}\left(1 / \epsilon^{(7+K)/2}\right)$ for $K$-level problems. When the function is strongly convex, this complexity can be further improved to $\mathcal{O}\left(1 / \epsilon^{(3+K)/4}\right)$. Subsequently, motivated by the NASA algorithm~\citep{Ghadimi2020AST}, \citet{balasubramanian2020stochastic} proposed using a linearized averaging stochastic estimator to track the function value, attaining a sample complexity of $\mathcal{O}\left(1 / \epsilon^{4}\right)$ for non-convex objectives. This rate was also obtained in a concurrent work~\citep{chen2021solving} by employing variance reduction techniques to evaluate the function value.

Later, \citet{Zhang2021MultiLevelCS} employed nested variance reduction to approximate gradients, improving the sample complexity to the optimal $\mathcal{O}\left(1 / \epsilon^{3}\right)$. However, this approach requires a large and increasing batch size on the order of $\mathcal{O}\left(1 / \epsilon\right)$. To address this issue, \citet{jiang2022optimal} developed a method called SMVR, which achieves the same optimal rate but only requires a constant batch size. SMVR also attains an improved rate of $\mathcal{O}\left(1 / \epsilon^{2}\right)$ for convex functions and $\mathcal{O}\left(1 / \left(\lambda \epsilon\right)\right)$ for $\lambda$-strongly convex objectives. More recently, \citet{gao2023stochastic} further introduced the decentralized stochastic multi-level optimization algorithm, which achieves the level-independent convergence rate under the decentralized setting. Despite these advancements, these algorithms are only applicable to unconstrained problems.

\subsection{Stochastic Projection-Free Algorithms}
The most well-known projection-free method,  Frank-Wolfe algorithm~\cite{Frank1956AnAF}, was originally designed for smooth convex optimization with polyhedral domains and has been extended to any convex compact set by~\citet{pmlr-v28-jaggi13}. 
In the stochastic setting, \citet{hazan2012} first developed a projection-free method for online smooth convex optimization. Later, \citet{pmlr-v48-hazana16} applied variance reduction techniques to the stochastic Frank-Wolfe algorithm. Inspired by the accelerated gradient method~\citep{Nesterov1983AMF}, \citet{doi:10.1137/140992382} proposed the stochastic conditional gradient sliding~(SCGS) method, which offered an SFO complexity of $\mathcal{O}\left(\lambda^{-1}\epsilon^{-1}\right)$ and an LMO complexity of $\mathcal{O}\left(\epsilon^{-1}\right)$ for smooth $\lambda$-strongly convex optimization. Besides that, projection-free methods are also widely investigated in online convex optimization in recent years~\cite{Hazan20,ICML:2020:Wan,JMLR:2022:Wan,Zak_SC22,Garber23,AAAI:2024:Wang}.

For non-convex objectives, \citet{Reddi2016StochasticFM} introduced the SVFW method, achieving an SFO complexity of $\mathcal{O}(\epsilon^{-10/3})$ and an LMO complexity of $\mathcal{O}(\epsilon^{-2})$ under the Frank-Wolfe gap. Motivated by the variance reduction technique SPIDER~\citep{Fang2018SPIDERNN}, \citet{pmlr-v97-yurtsever19b} developed the SPIDER-FW algorithm, improving the SFO complexity to $\mathcal{O}(\epsilon^{-3})$ by using a large batch size of $\mathcal{O}(\epsilon^{-1})$. To avoid relying on large batches, \citet{Zhang2019OneSS} proposed the one-sample stochastic Frank-Wolfe algorithm~(1-SFW), which attains the same SFO complexity and obtains an LMO complexity of $\mathcal{O}(\epsilon^{-3})$. Rather than focusing on the Frank-Wolfe gap criterion, \citet{pmlr-v80-qu18a} and \citet{Balasubramanian2018ZerothOrderNS} explored the gradient mapping criterion, and attained $\mathcal{O}(\epsilon^{-2})$ complexities for both the SFO and LMO at the cost of using a large batch of $\mathcal{O}(\epsilon^{-1})$ in each iteration. 

In the context of stochastic multi-level optimization, \citet{NEURIPS2022_7e16384b} recently proposed a projection-free conditional gradient-type algorithm, which combines the linearized NASA algorithm with the inexact conditional gradient technique~\citep{Balasubramanian2018ZerothOrderNS}. This method achieves an SFO complexity of $\mathcal{O}\left(1 / \epsilon^{2}\right)$ and an LMO complexity of $\mathcal{O}\left(1 / \epsilon^{3}\right)$ in terms of the gradient mapping criterion. However, its SFO complexity does not match the complexity of $\mathcal{O}\left(1/\epsilon^{1.5}\right)$ achieved by variance reduction methods for unconstrained multi-level problems\footnote{Gradient mapping reduces to $\E\left[\Norm{\nabla F(\x)}^2 \right] \leq \epsilon$ for unconstrained problems, and existing methods~\cite{Zhang2021MultiLevelCS,jiang2022optimal} ensure $\E\left[\Norm{\nabla F(\x)} \right] \leq \epsilon$ with a complexity of $\mathcal{O}(1/\epsilon^{3})$, implying a rate of $\mathcal{O}(1/\epsilon^{1.5})$ for gradient mapping.}. Furthermore, they only consider non-convex functions, and solely focus on the gradient mapping criterion, which are the main drawbacks we aim to address in this paper.

\section{The Proposed Methods}
In this section, we first introduce the assumptions used in this paper. Then, we present the proposed algorithms, along with their corresponding theoretical guarantees for three different criteria: Frank-Wolfe gap, gradient mapping and optimal gap.

\subsection{Assumptions}
We adopt the following assumptions throughout the paper, which are commonly used in studies of stochastic compositional optimization~\citep{Yuan2019EfficientSN,Zhang2019ASC,Zhang2021MultiLevelCS,jiang2022optimal} and stochastic projection-free analysis~\citep{pmlr-v80-qu18a, pmlr-v97-yurtsever19b, Zhang2019OneSS}.

\begin{ass} \label{asm:0} (Constrained set)
The decision set $\X$ is closed and convex with a bounded domain such that $\max _{\x, \y \in \mathcal{X}}\Norm{\x-\y} \leq D$.
\end{ass}

\begin{ass} \label{asm:1} (Smoothness and Lipschitz continuity)
All functions $f_1, \dotsc, f_K$ are $L_f$-Lipschitz continuous, and their Jacobians $\nabla f_1, \dotsc, \nabla f_K$ are $L_J$-Lipschitz continuous.
\end{ass}

\begin{ass}\label{asm:stochastic2} (Bounded variance)\ \ For $1\leq i\leq K$, we assume that:
\begin{gather*}
       \E_{\xi_t^i}\left[f_i(\x;\xi_t^i)\right] = f_i(\x), 
	 \\ \E_{\xi_t^i}\left[\nabla f_i(\x;\xi_t^i)\right] = \nabla f_i(\x),\\
 \E_{\xi_t^i}\left[\Norm{f_i(\x;\xi_t^i) - f_i(\x)}^2 \right]\leq \sigma^2,
	 \\
	\E_{\xi_t^i}\left[\Norm{\nabla f_i(\x;\xi_t^i) - \nabla f_i(\x) }^2 \right] \leq  \sigma_J^2,
\end{gather*}
where $\{\xi_t^i\}_{i=1}^K$ are mutually independent.
\end{ass}

\begin{ass}\label{asm:stoc_smooth3}
	(Average smoothness) \ \ For $1\leq i\leq K$, we assume that:
	\begin{align*}
	\E_{\xi_t^i}\left[\Norm{f_i(\x;\xi_t^i)-  f_i(\y;\xi_t^i)}^2\right] &\leq \LL_f^2 \Norm{\x - \y}^2,\\
		\E_{\xi_t^i}\left[\Norm{\nabla f_i(\x;\xi_t^i)- \nabla f_i(\y;\xi_t^i)}^2\right] &\leq  \LL_J^2 \Norm{\x - \y}^2.
	\end{align*}
\end{ass}

\begin{ass}\label{asm:stochastic4} We suppose that $F\left(\x_{1}\right)-F_{\star} \leq \Delta_{F}$ for the initial solution $\x_{1}$, where $F_{\star}=\min_{\x \in \X} F(\x)$.
\end{ass}

\subsection{Results for Frank-Wolfe Gap}
First, we examine the sample complexity under the criterion of Frank-Wolfe gap. The primary procedure of our algorithm involves estimating the gradient of the objective function and then employing the Frank-Wolfe method to replace the projection operation. Note that the gradient of the multi-level function exhibits a nested structure, and the estimation error would accumulate as the level becomes deeper. To address this issue, we resort to the variance reduction technique STORM~\citep{cutkosky2019momentum} to estimate both the inner function value and the overall gradient. Specifically, we draw a batch of samples $\{\xi_t^{i,1},\cdots ,\xi_t^{i,B_1} \}$ with the batch size $B_1$ for each level $i$ at time step $t$, and then employ a variance reduction estimator $\u_t^i$ to track each inner function value $f_i(\cdot)$ as:
\begin{equation}
    \begin{aligned}\label{ss1}
    \u_t^i = (1-\alpha)\u_{t-1}^i +   \frac{1}{B_1} \sum_{j=1}^{B_1} f_i(\u_t^{i-1};\xi_t^{i,j})\\
    -  (1-\alpha)\frac{1}{B_1} \sum_{j=1}^{B_1}f_i(\u_{t-1}^{i-1};\xi_t^{i,j}) .
\end{aligned}
\end{equation}
This evaluation ensures that the estimation error is reduced over time. Then, we employ a similar variance-reduced estimator $\v_t$ to evaluate the overall gradient of the objective function $\nabla F(\x_t)$ as:
\begin{equation}
\begin{aligned}\label{ss2}
\v_t = (1-\alpha)\v_{t-1} +\frac{1}{B_1} \sum_{j=1}^{B_1} \left[ \prod_{i=1}^K \nabla f_i(\u_t^{i-1};\xi_t^{i,j}) \right] \\
- (1-\alpha)\frac{1}{B_1} \sum_{j=1}^{B_1} \left[\prod_{i=1}^K \nabla f_i(\u_{t-1}^{i-1};\xi_t^{i,j})\right] .
\end{aligned}
\end{equation}
After obtaining the gradient estimation, we follow the framework of the Frank-Wolfe algorithm~\citep{pmlr-v28-jaggi13}, but use the estimator $\v_t$ to replace the gradient required in the original algorithm as follows:
\begin{gather*}
    \z_{t} = \arg \min_{\x \in \X} \langle \x,\v_{t} \rangle, \\  \x_{t+1} = \x_t + \eta (\z_t - \x_t).
\end{gather*}
In this way, we develop our Projection-free Multi-level Variance Reduction~(PMVR) method for stochastic multi-level problems. The complete algorithm is presented in Algorithm~\ref{alg:1}. Note that in the first iteration~(when $t=1$), we can simply set estimator $\u_{1}^i = \frac{1}{B_0} \sum_{j=1}^{B_0}  f_i(\u_{1}^{i-1};\xi^{i,j}_1)$ and $\v_{1} = \frac{1}{B_0} \sum_{j=1}^{B_0} \left[ \prod_{i=1}^K  \nabla f_i(\u_{1}^{i-1};\xi_1^{i,j}) \right]$, where $B_0$ is the batch size for the first iteration.

\begin{algorithm}[!tb]
	\caption{PMVR Algorithm}
	\label{alg:1}
	\begin{algorithmic}[1]
	\STATE {\bfseries Input:} parameters $T$, $\eta$, $\alpha$, initial points $\left(\x_1,\u_1,\v_1\right)$ 
		\FOR{time step $t = 1$ {\bfseries to} $T$}
		\STATE Set $\u_t^0 = \x_t$
		\FOR{level $i = 1$ {\bfseries to} $K$}
		\STATE Draw a batch of samples $\left\{ \xi_t^{i,1}, \dots, \xi_t^{i,B_1} \right\}$
		\STATE Compute the estimator $\u_t^i$ according to~(\ref{ss1})
        \ENDFOR
        \STATE Compute the gradient estimator $\v_{t}$ according to~(\ref{ss2})
        \STATE Compute $\z_{t} = \arg \min_{\x \in \X} \langle \x,\v_{t} \rangle$
		\STATE Update the weight: $\x_{t+1} = \x_t + \eta (\z_t - \x_t) $
		\ENDFOR
	\STATE Choose $\tau$ uniformly at random from the set $\{1, \ldots, T\}$
	\STATE Return $(\x_{\tau},\u_{\tau},\v_{\tau})$
	\end{algorithmic}
\end{algorithm}

\textbf{Comparison with the SMVR method:} Although the SMVR algorithm~\citep{jiang2022optimal} for unconstrained problems also utilizes the STORM estimator to assess inner function values and gradients at each level, our PMVR method differs from  SMVR in the following aspects: 
i) SMVR first estimates the gradient at each level using STORM and then computes the overall gradient through multiplication, requiring an additional gradient clipping for each level to ensure the overall gradient does not explode. This operation demands knowledge of the gradient upper bound of each level, which is impractical in real-world applications. In contrast, our PMVR method directly applies variance reduction to the overall gradient, eliminating the need for such gradient clipping. This technique is also mentioned in a concurrent work~\cite{gao2023stochastic}; 
ii) Our method employs a constant learning rate $\eta$ and momentum parameter $\alpha$, which are easy to implement and fine-tune, whereas SMVR requires these parameters to decrease gradually, which is more complicated; 
iii) Our PMVR is a projection-free algorithm specifically designed for constrained problems, with theoretical guarantees focused on the Frank-Wolfe gap and gradient mapping. In contrast, SMVR aims to find stationary points for unconstrained objectives.

Next, we present the sample complexities of Algorithm~\ref{alg:1} with respect to the Frank-Wolfe gap $\F(\cdot)$ defined in equation~(\ref{FW}). Note that by using a large batch size~$B_1$, we are able to decrease the iteration numbers and thus reduce the LMO complexity. As a result, we provide a constant batch version and a large batch version for our guarantees.

\begin{theorem}\label{thm:main}
By setting $B_1 = \mathcal{O}(1)$, $\eta = \mathcal{O}(\epsilon^2)$, and $\alpha = \mathcal{O}(\epsilon^2)$, our PMVR algorithm guarantees that $\E\left[\F(\x_\tau)\right] \leq \epsilon$ within $T = \mathcal{O}(\epsilon^{-3})$ iterations.
\end{theorem}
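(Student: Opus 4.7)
The plan is to combine a Frank--Wolfe descent inequality with a layer-wise STORM-type variance reduction argument. First, I would verify that $F$ is $L_F$-smooth with $L_F$ polynomial in $K,L_f,L_J$ via the chain rule from Assumption~\ref{asm:1}, then apply smoothness to the update $\x_{t+1}=\x_t+\eta(\z_t-\x_t)$:
\[F(\x_{t+1})\le F(\x_t)+\eta\langle\nabla F(\x_t),\z_t-\x_t\rangle+\tfrac{L_F\eta^2 D^2}{2}.\]
Using $\z_t=\arg\min_{\x\in\X}\langle\x,\v_t\rangle$ against the maximizer $\hat{\x}_t$ of $\F(\x_t)$ together with the diameter bound in Assumption~\ref{asm:0},
\[\langle\nabla F(\x_t),\z_t-\x_t\rangle\le -\F(\x_t)+2D\,\|\v_t-\nabla F(\x_t)\|.\]
Telescoping and averaging over the uniform random $\tau\in\{1,\ldots,T\}$ yields
\[\E[\F(\x_\tau)]\le\frac{\Delta_F}{\eta T}+\frac{L_F D^2\eta}{2}+\frac{2D}{T}\sum_{t=1}^T\E\|\v_t-\nabla F(\x_t)\|.\]

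The bulk of the proof is controlling the last sum. Writing $y_t^i:=f_i\circ\cdots\circ f_1(\x_t)$ and using the definition~(\ref{ss1}), I would first establish, for each level $i$, a STORM-style recursion of the form
\[\E\|\u_t^i-y_t^i\|^2\le(1-\alpha)^2\E\|\u_{t-1}^i-y_{t-1}^i\|^2+\tfrac{C_1\alpha^2\sigma^2}{B_1}+\tfrac{C_2}{B_1}\bigl(\E\|\u_t^{i-1}-y_t^{i-1}\|^2+\E\|\u_{t-1}^{i-1}-y_{t-1}^{i-1}\|^2+\eta^2 D^2\bigr),\]
separating the conditionally zero-mean noise (controlled by Assumption~\ref{asm:stochastic2}) from the Lipschitz drift (controlled by Assumption~\ref{asm:stoc_smooth3} together with $\|\x_t-\x_{t-1}\|\le\eta D$). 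Expanding $\nabla F(\x_t)=\prod_{i=1}^K\nabla f_i(y_t^{i-1})$ and peeling the product one factor at a time, an analogous recursion holds for $\v_t-\nabla F(\x_t)$, now driven by all the inner errors $\{\E\|\u_t^{j-1}-y_t^{j-1}\|^2\}_{j=1}^K$. Induction on $i$ from $1$ up to $K$, followed by unrolling in $t$ and summing with $\sum_s(1-\alpha)^{2s}\le1/(2\alpha)$, delivers a uniform bound
\[\E\|\v_t-\nabla F(\x_t)\|^2\le(1-\alpha)^{2(t-1)}E_1+C_3\Bigl(\alpha+\tfrac{\eta^2}{\alpha}\Bigr),\]
where $C_3$ is polynomial in $K$ because each peeling step only introduces a factor depending on $L_f,L_J$, not on $T$. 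Choosing the one-time initial batch $B_0=\mathcal{O}(\epsilon^{-2})$ so that $E_1=\mathcal{O}(\epsilon^2)$ makes this bound $\mathcal{O}(\epsilon^2)$ for all $t$, and Jensen's inequality gives $\E\|\v_t-\nabla F(\x_t)\|=\mathcal{O}(\epsilon)$ uniformly.

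Substituting back, the Frank--Wolfe bound becomes $\E[\F(\x_\tau)]\le\Delta_F/(\eta T)+\mathcal{O}(\eta)+\mathcal{O}(\epsilon)$, and plugging in $\alpha=\Theta(\epsilon^2)$, $\eta=\Theta(\epsilon^2)$, $T=\Theta(\epsilon^{-3})$ makes every term $\mathcal{O}(\epsilon)$, delivering $\E[\F(\x_\tau)]\le\epsilon$. The main obstacle I anticipate is the multi-level bookkeeping in the second step: keeping the inductive constants polynomial in $K$ by expanding $\prod_i\nabla f_i$ term-by-term using uniform $L_f$-boundedness (inherited from Assumption~\ref{asm:1}), and carefully handling the stochastic cross-terms in the STORM recursion so that the martingale contributions telescope rather than accumulating an extra factor of $T$.
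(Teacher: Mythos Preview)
Your Frank-Wolfe descent step and telescoping are correct and match the paper. The gap is in the multi-level variance-reduction analysis.

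The STORM recursion you write for the \emph{cumulative} error $\|\u_t^i-y_t^i\|^2$ does not hold as stated. Writing $\u_t^i-y_t^i=[\u_t^i-f_i(\u_t^{i-1})]+[f_i(\u_t^{i-1})-f_i(y_t^{i-1})]$, the second bracket is a bias of size $L_f\|\u_t^{i-1}-y_t^{i-1}\|$ that is \emph{not} reduced by $B_1$, and when you re-express $\u_{t-1}^i-f_i(\u_{t-1}^{i-1})$ back in terms of $\u_{t-1}^i-y_{t-1}^i$ you pick up an extra constant factor that destroys the $(1-\alpha)^2$ contraction. Even if one grants your recursion as written, the induction on $i$ does not give $C_3$ polynomial in $K$: each unrolling in $t$ puts a factor $1/(2\alpha)$ in front of the previous level's cumulative error, so after $K-1$ levels $C_3$ scales like $(B_1\alpha)^{-(K-1)}$, which with $B_1=\mathcal{O}(1)$ and $\alpha=\Theta(\epsilon^2)$ blows up.

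The paper's route is to track the \emph{one-step} errors $\|\u_t^i-f_i(\u_t^{i-1})\|^2$ and $\bigl\|\v_t-\prod_i\nabla f_i(\u_t^{i-1})\bigr\|^2$ instead; these do admit clean STORM recursions with drift $\|\u_t^{i-1}-\u_{t-1}^{i-1}\|^2/B_1$. The idea you are missing is that this \emph{movement} (not the error) is controlled directly from the update rule: $\u_t^{i-1}-\u_{t-1}^{i-1}$ equals $\alpha$ times the level-$(i-1)$ one-step error, plus $\alpha$ times a zero-mean noise, plus a Lipschitz term in $\u_t^{i-2}-\u_{t-1}^{i-2}$. Recursing to level $0$ gives $\sum_i\|\u_t^{i-1}-\u_{t-1}^{i-1}\|^2=\mathcal{O}\bigl(\eta^2D^2+\alpha^2\sigma^2+\alpha^2\sum_i\|\u_t^i-f_i(\u_t^{i-1})\|^2\bigr)$. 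The $\alpha^2$ prefactor is precisely what decouples the levels: after the $1/\alpha$ from unrolling, the cross-level coupling is $\mathcal{O}(\alpha)$ times the summed one-step errors and can be absorbed by self-bounding, producing $\frac{1}{T}\sum_t\E\|\nabla F(\x_t)-\v_t\|^2=\mathcal{O}\bigl(\tfrac{1}{\alpha B_0 T}+\tfrac{\alpha}{B_1}+\tfrac{\eta^2}{\alpha B_1}\bigr)$ with constants polynomial in $K$. Your triangle-inequality detour through $y_t^{i-1}$ discards exactly this $\alpha$ factor.
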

\textbf{Remark:} The above theorem indicates that both the SFO and LMO complexities are of the order $\mathcal{O}(\epsilon^{-3})$, consistent with results for projection-free single-level problems using a constant batch size~\citep{Zhang2019OneSS}.
\begin{theorem}\label{thm:main_0} (Large Batch) By setting $B_1 = \mathcal{O}(\epsilon^{-1})$, $\eta = \mathcal{O}(\epsilon)$, and $\alpha = \mathcal{O}(\epsilon)$, our PMVR algorithm guarantees that $\E\left[\F(\x_\tau)\right] \leq \epsilon$ within $T = \mathcal{O}(\epsilon^{-2})$ iterations.
\end{theorem}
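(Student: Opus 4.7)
The plan is to mirror the proof of Theorem~\ref{thm:main} but to exploit the larger batch size $B_1=\mathcal{O}(\epsilon^{-1})$ to sharpen the STORM variance bounds, which will permit the larger parameters $\eta=\alpha=\mathcal{O}(\epsilon)$ and reduce the iteration budget from $\mathcal{O}(\epsilon^{-3})$ to $\mathcal{O}(\epsilon^{-2})$. The first step is the usual Frank--Wolfe descent inequality: smoothness of $F$ (whose constant $L_F$ follows from Assumption~\ref{asm:1} by a standard chain-rule estimate) together with $\Norm{\x_{t+1}-\x_t}\leq\eta D$, optimality of $\z_t$ against the $\F(\x_t)$-maximizer, and Cauchy--Schwarz on the gradient error will yield
\begin{equation*}
F(\x_{t+1}) \leq F(\x_t) - \eta\,\F(\x_t) + 2\eta D\,\Norm{\v_t - \nabla F(\x_t)} + \tfrac{L_F\eta^2 D^2}{2}.
\end{equation*}

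The second and central step is to derive the STORM tracking recursions adapted to the mini-batch of size $B_1$. Writing $\y_t^i=f_i\circ\cdots\circ f_1(\x_t)$ and invoking Assumptions~\ref{asm:stochastic2} and~\ref{asm:stoc_smooth3}, I expect a level-by-level recursion of the form
\begin{equation*}
\E\bigl[\Norm{\u_t^i-\y_t^i}^2\bigr] \leq (1-\alpha)\E\bigl[\Norm{\u_{t-1}^i-\y_{t-1}^i}^2\bigr] + \frac{C_1\alpha^2\sigma^2}{B_1} + \frac{C_2\LL_f^2\eta^2 D^2}{B_1},
\end{equation*}
where the crucial factor $1/B_1$ appears on both noise terms because each level averages $B_1$ independent samples in~(\ref{ss1}). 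An analogous chain-rule argument for~(\ref{ss2}) will produce
\begin{equation*}
\E\bigl[\Norm{\v_t-\nabla F(\x_t)}^2\bigr] \leq (1-\alpha)\E\bigl[\Norm{\v_{t-1}-\nabla F(\x_{t-1})}^2\bigr] + \frac{C_3\alpha^2\sigma_J^2}{B_1} + \frac{C_4\eta^2 D^2}{B_1},
\end{equation*}
up to lower-order cross terms from the inner trackers that the preceding recursion absorbs.

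Unrolling these geometric recursions and averaging gives $\tfrac{1}{T}\sum_{t=1}^T\E\bigl[\Norm{\v_t-\nabla F(\x_t)}^2\bigr] \leq \mathcal{O}(\alpha/B_1 + \eta^2/(\alpha B_1)) + \mathcal{O}(1/(\alpha T))$, where the initialization term is controlled by setting $B_0$ of order $1/\epsilon^2$ as the algorithm permits. Averaging the descent inequality, applying Jensen's inequality to the square-root term, and substituting $B_1=\mathcal{O}(\epsilon^{-1})$ and $\eta=\alpha=\mathcal{O}(\epsilon)$, each of $\Delta_F/(\eta T)$, $L_F\eta D^2$, and $D\sqrt{\alpha/B_1+\eta^2/(\alpha B_1)}$ collapses to $\mathcal{O}(\epsilon)$ once $T=\mathcal{O}(\epsilon^{-2})$, yielding $\E[\F(\x_\tau)]\leq\epsilon$ as required.

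The main obstacle is the second step: carrying the $1/B_1$ factor through \emph{every} martingale-increment term of the nested chain-rule expansion of $\v_t$, and verifying that the errors from the inner trackers $\u_t^i$, once propagated through the product of Jacobians, do not generate any summand that is independent of $B_1$. The remainder of the argument is bookkeeping directly parallel to Theorem~\ref{thm:main}, with the improved scaling simply traded between $B_1$ and $(\eta,\alpha)$.
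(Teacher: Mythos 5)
Your plan follows essentially the same route as the paper's proof: a Frank--Wolfe descent inequality, STORM recursions for the inner trackers and the overall gradient with the $1/B_1$ variance reduction from mini-batching, and a final averaging with Jensen under the stated parameter choices. The one point you flag as the main obstacle is handled in the paper by first splitting $\Norm{\v_t-\nabla F(\x_t)}^2$ into the tracking error of $\v_t$ against $\prod_{i}\nabla f_i(\u_t^{i-1})$ (for which the batch estimate is actually unbiased) plus the propagated inner-tracker errors, and by bounding the iterate drift $\Norm{\u_t^{i-1}-\u_{t-1}^{i-1}}^2$ level by level (it contains $\alpha^2$-scale terms, not only $\eta^2 D^2$); note also that $B_0=\mathcal{O}(\epsilon^{-1})$ already suffices for the initialization term, so your $\mathcal{O}(\epsilon^{-2})$ choice is more than needed.
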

\textbf{Remark:} By employing a large batch size of $\mathcal{O}(\epsilon^{-1})$, our method obtains an SFO complexity of $\mathcal{O}(\epsilon^{-3})$ and an LMO complexity of $\mathcal{O}(\epsilon^{-2})$. These results align with those achieved by existing projection-free methods for single-level objectives~\citep{pmlr-v97-yurtsever19b}.

\subsection{Results for Gradient Mapping}
Then, we investigate the complexities under the criterion of gradient mapping $\G(\cdot)$ defined in equation~(\ref{GM}). To deal with the gradient mapping, our PMVR algorithm only requires minimal modifications to fit this criterion. Based on Proposition 2 of~\citet{NEURIPS2022_7e16384b}, gradient mapping can be decomposed into two components:
\begin{align*}
  \mathcal{G}(\x_t) \leq - 4 \beta \min _{\w \in \X} g(\w, \x_t, \v_t)+2\left\| \nabla F(\x_t)- \v_t \right\|^{2},  
\end{align*}  
where  
\begin{align*}
  g(\w, \x_t, \v_t)= \langle \v_t, \w-\x_t \rangle+\frac{\beta}{2}\|\w-\x_t\|^{2}. 
\end{align*}
Since our PMVR method has already employed variance-reduced techniques to ensure that the gradient estimation error $\E\left[\Norm{\nabla F(\x_t)- \v_t }^{2}\right]$ decreases over time, we can reuse this part and focus on bounding the $-\min _{\w \in \X} g(\w, \x_t, \v_t)$ term. To address this constrained quadratic minimization problem, we design a sub-algorithm, modified from the Frank-Wolfe method~\citep{pmlr-v28-jaggi13}, which runs for several loops. In each loop $n$, we update as follows:
\begin{gather*}
    \s
      = \arg \min _{\hat{\s} \in \X}\left\langle \v_t+\beta\left(\w_n-\x_t\right), \hat{\s}\right\rangle, \\
      \w_{n+1}=\left(1-\gamma_{t}\right) \w_{n}+\gamma_{t} \s.
\end{gather*}
Note that $\v_t+\beta\left(\w_n-\x_t\right)$ is the gradient of $g(\w_n,\x_t,\v_t)$ with respect to parameter $\w_n$. Overall, we only need to replace Step 9 in Algorithm~\ref{alg:1} with a sub-algorithm presented in Algorithm~\ref{alg:2}, and we can obtain the guarantees for gradient mapping below.
\begin{algorithm}[!tb]
	\caption{PMVR-v2}
	\label{alg:2}
         Replace Step 9 of Algorithm~\ref{alg:1} with the following
 
	\begin{algorithmic}[1]
         \STATE Initialize $\w_1 = \x_t$
        \FOR{time step $n = 1$ {\bfseries to} $N$}
	\STATE Compute $ \s
      = \arg \min _{\hat{\s} \in \X}\left\langle \v_t+\beta\left(\w_n-\x_t\right), \hat{\s}\right\rangle$
	\STATE Set $\w_{n+1}=\left(1-\gamma_{t}\right) \w_{n}+\gamma_{t} \s$
	\ENDFOR
     \STATE Set $\z_t = \w_{N+1}$
\end{algorithmic}
\end{algorithm}

\begin{theorem}\label{thm:2}
By setting $B_1 = \mathcal{O}(1)$, $N = \mathcal{O}(\epsilon^{-1})$, $\eta = \mathcal{O}(\sqrt{\epsilon})$, and $\alpha = \mathcal{O}(\epsilon)$, our PMVR-v2 algorithm guarantees $ \E\left[\G (\x_\tau) \right]\leq \epsilon$ in $T=\mathcal{O}(\epsilon^{-1.5})$ iterations.
\end{theorem}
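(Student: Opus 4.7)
The plan is to bound $\E[\G(\x_\tau)] = \tfrac{1}{T}\sum_{t=1}^{T}\E[\G(\x_t)]$ by combining four ingredients: the decomposition of $\G(\x_t)$ recalled just before Algorithm~\ref{alg:2}, the classical Frank--Wolfe guarantee applied to the inner loop on the $\beta$-smooth convex quadratic $g(\cdot,\x_t,\v_t)$, an $L_F$-smoothness descent inequality for the outer iterate, and a multi-level STORM analysis of the estimators $\{\u_t^i\}$ and $\v_t$. The key structural observation to exploit is that the outer descent produces a strictly negative multiple of $\sum_t\|\z_t-\x_t\|^2$ on the left, while the STORM variance bound generates exactly that same sum on the right, so the two can be coupled and absorbed.

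For the inner loop, since $g(\cdot,\x_t,\v_t)$ is $\beta$-smooth convex in $\w$ and $\w_1 = \x_t\in\X$, a standard Frank--Wolfe analysis of Algorithm~\ref{alg:2} gives
\[
g(\z_t,\x_t,\v_t) - \min_{\w\in\X}g(\w,\x_t,\v_t) \leq \frac{2\beta D^2}{N}.
\]
Substituting into $\G(\x_t)\leq -4\beta\min_\w g(\w,\x_t,\v_t) + 2\|\nabla F(\x_t)-\v_t\|^2$ and expanding $g(\z_t,\x_t,\v_t) = \langle\v_t,\z_t-\x_t\rangle + \tfrac{\beta}{2}\|\z_t-\x_t\|^2$ yields an upper bound on $\G(\x_t)$ in terms of $-4\beta\langle\v_t,\z_t-\x_t\rangle$, $-2\beta^2\|\z_t-\x_t\|^2$, and an $O(\beta^2 D^2/N)$ residual. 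Next, $L_F$-smoothness of $F$ (which follows from Assumption~\ref{asm:1}) combined with the update $\x_{t+1}=\x_t+\eta(\z_t-\x_t)$ bounds $-\langle\v_t,\z_t-\x_t\rangle$ by $(F(\x_t)-F(\x_{t+1}))/\eta + \tfrac{L_F\eta}{2}\|\z_t-\x_t\|^2 + \|\nabla F(\x_t)-\v_t\|\,\|\z_t-\x_t\|$; a Young split of the cross term together with $\eta = O(\beta/L_F)$ leaves the per-iteration inequality
\[
\G(\x_t) + c_1\beta^2\|\z_t-\x_t\|^2 \leq \tfrac{c_2\beta}{\eta}[F(\x_t)-F(\x_{t+1})] + c_3\|\nabla F(\x_t)-\v_t\|^2 + \tfrac{c_4\beta^2 D^2}{N}.
\]

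The main obstacle is controlling $\sum_t \E[\|\nabla F(\x_t)-\v_t\|^2]$ under the multi-level structure, since the inner estimators $\u_t^i$ are nested. I would split the error into the STORM fluctuation of $\v_t$ around the surrogate $\prod_{i=1}^K\nabla f_i(\u_t^{i-1})$ plus the bias $\prod_{i=1}^K\nabla f_i(\u_t^{i-1}) - \nabla F(\x_t)$; the Lipschitz and smoothness conditions of Assumptions~\ref{asm:1} and \ref{asm:stoc_smooth3} reduce this bias to a weighted sum of level-wise errors $E_t^i := \E[\|\u_t^i - f_i\circ\cdots\circ f_1(\x_t)\|^2]$. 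Each $E_t^i$ and the STORM error of $\v_t$ itself obey coupled recursions whose driving noise involves the previous-level error $E_t^{i-1}$ together with the outer step size through $\|\x_{t+1}-\x_t\|^2 = \eta^2\|\z_t-\x_t\|^2$. Unrolling from level $0$ upward (with $E_t^0 = 0$) and summing over $t$ gives the key estimate
\[
\sum_{t=1}^{T}\E[\|\nabla F(\x_t)-\v_t\|^2] \leq O\!\Bigl(\tfrac{\Delta_0}{\alpha} + T\alpha\sigma^2 + \tfrac{\LL^2\eta^2}{\alpha}\sum_{t=1}^{T}\|\z_t-\x_t\|^2\Bigr),
\]
where $\Delta_0$ is the initial error controlled by the warm-start batch $B_0$.

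Plugging this variance bound into the summed per-iteration inequality, the coefficient of $\sum_t \|\z_t-\x_t\|^2$ on the right becomes $c_3\LL^2\eta^2/\alpha$, which with $\eta = \Theta(\sqrt{\alpha})$ is a constant dominated by the negative $c_1\beta^2$ on the left once $\beta$ is taken large enough, so the $\sum_t\|\z_t-\x_t\|^2$ term is absorbed. Dividing by $T$ gives
\[
\E[\G(\x_\tau)] \leq O\!\Bigl(\tfrac{\beta\Delta_F}{\eta T} + \tfrac{\Delta_0}{\alpha T} + \alpha\sigma^2 + \tfrac{\beta^2 D^2}{N}\Bigr).
\]
Finally, setting $N = \Theta(\epsilon^{-1})$, $\alpha = \Theta(\epsilon)$, $\eta = \Theta(\sqrt{\epsilon})$, $T = \Theta(\epsilon^{-1.5})$, and warm-starting with $B_0 = \Theta(\sigma^2/\sqrt{\epsilon})$ so that $\Delta_0 = O(\sqrt{\epsilon})$, each term is $O(\epsilon)$, yielding $\E[\G(\x_\tau)] \leq \epsilon$ as claimed.
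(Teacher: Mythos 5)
Your proposal is correct and follows essentially the same route as the paper's proof: the same decomposition of $\G(\x_t)$ via the surrogate $g(\cdot,\x_t,\v_t)$, the same $2\beta D^2/N$ inner Frank--Wolfe bound, the same outer descent inequality producing a negative $\beta^2\|\z_t-\x_t\|^2$ term, and the same STORM variance bound $O(\Delta_0/\alpha+T\alpha+\eta^2\alpha^{-1}\sum_t\|\z_t-\x_t\|^2)$ that is absorbed by it, with matching parameter choices (the paper likewise uses $B_0=\mathcal{O}(\epsilon^{-0.5})$). The only cosmetic difference is in the absorption step: since $\beta$ is fixed by the definition of $\G$, one should shrink the constant in $\eta=\Theta(\sqrt{\alpha})$ (the paper requires $\eta\leq\sqrt{\beta^2\alpha B_1/(20L_3)}$) rather than ``take $\beta$ large enough,'' but these are equivalent up to constants and do not affect the result.
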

\textbf{Remark:} When using a constant batch size, our algorithm results in $\mathcal{O}(\epsilon^{-1.5})$ SFO complexity and $\mathcal{O}(\epsilon^{-2.5})$ LMO complexity, which are both better than the previous algorithm LiNASA+ICG~\citep{NEURIPS2022_7e16384b}, that achieves $\mathcal{O}(\epsilon^{-2})$  SFO complexity and $\mathcal{O}(\epsilon^{-3})$  LMO complexity. 

Next, we can improve the LMO complexity by using a large batch size.
\begin{theorem}\label{thm:2_0} (Large Batch) By setting $B_1 = \mathcal{O}(\epsilon^{-0.5})$, $N = \mathcal{O}(\epsilon^{-1})$, $\eta = \mathcal{O}(1)$, and $\alpha = \mathcal{O}(\sqrt{\epsilon})$, our PMVR-v2 ensures $\E\left[\G (\x_\tau) \right] \leq \epsilon$ in $T=\mathcal{O}(\epsilon^{-1})$ iterations.
\end{theorem}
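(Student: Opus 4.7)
The plan is to parallel the proof of Theorem~\ref{thm:2} (constant-batch gradient mapping), reusing its ingredients with parameters rescaled for the large-batch regime. Applying Proposition~2 of~\citet{NEURIPS2022_7e16384b} and the uniform sampling of $\tau$ gives
\begin{equation*}
\E\left[\G(\x_\tau)\right]\le 4\beta\,\bar B+2\,\bar E,
\end{equation*}
where $\bar B\coloneqq \frac{1}{T}\sum_{t=1}^{T}\E\bigl[-\min_{\w\in\X}g(\w,\x_t,\v_t)\bigr]$ and $\bar E\coloneqq \frac{1}{T}\sum_{t=1}^{T}\E\bigl[\|\v_t-\nabla F(\x_t)\|^{2}\bigr]$, so it suffices to show $\bar B=\mathcal{O}(\epsilon)$ and $\bar E=\mathcal{O}(\epsilon)$.

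To bound $\bar B$, I would use the $L_F$-smoothness of $F$ together with the update $\x_{t+1}=\x_t+\eta(\z_t-\x_t)$ and the Frank--Wolfe error bound $g(\z_t,\x_t,\v_t)-\min_\w g\le \delta_N=\mathcal{O}(\beta D^{2}/N)$ that Algorithm~\ref{alg:2} inherits from standard Frank--Wolfe on a $\beta$-strongly-convex quadratic. Choosing $\beta\ge L_F\eta$ cancels the $\tfrac{L_F\eta^2}{2}\|\z_t-\x_t\|^{2}$ term in the smoothness inequality, and telescoping produces $\bar B \le C(\Delta_F/(\eta T)+\delta_N)+\tfrac{1}{T}\sum_t\E\bigl[\langle\nabla F(\x_t)-\v_t,\z_t-\x_t\rangle\bigr]$. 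The cross term is handled by Young's inequality together with the key identity
\begin{equation*}
\|\z_t-\x_t\|^{2}\le \tfrac{4\delta_N}{\beta}+\tfrac{4}{\beta}\bigl(-\min_\w g(\w,\x_t,\v_t)\bigr),
\end{equation*}
which follows from $\beta$-strong convexity of $g$ together with first-order optimality of the constrained minimizer. Absorbing the resulting $(-\min_\w g)$ piece back into $\bar B$ yields the closed-form bound $\bar B \le C'\bigl(\Delta_F/(\eta T)+\delta_N+\bar E/\beta\bigr)$.

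For $\bar E$, the multi-level STORM recursion (the same one underlying Theorem~\ref{thm:main}) gives
\begin{equation*}
E_t\le (1-\alpha)E_{t-1}+\frac{C_1\alpha^{2}}{B_1}+\frac{C_2\eta^{2}}{B_1}\|\z_{t-1}-\x_{t-1}\|^{2}.
\end{equation*}
Telescoping and reusing the same strong-convexity bound on $\|\z_{t-1}-\x_{t-1}\|^{2}$ produces $\bar E \le C''\bigl(E_1/(\alpha T)+\alpha/B_1+\eta^{2}(\delta_N+\bar B)/(\alpha B_1)\bigr)$. Plugging in $\alpha=\Theta(\sqrt{\epsilon})$, $\eta=\Theta(1)$, $B_1=\Theta(\epsilon^{-0.5})$, $N=T=\Theta(\epsilon^{-1})$, together with a warm-start batch $B_0=\Theta(\epsilon^{-0.5})$ to control $E_1/(\alpha T)$, one checks that $\alpha/B_1=\epsilon$, $\delta_N=\mathcal{O}(\epsilon)$, $\Delta_F/(\eta T)=\mathcal{O}(\epsilon)$, and $\eta^{2}/(\alpha B_1)=\Theta(1)$. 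Solving the coupled linear $(\bar B,\bar E)$ system---contractive as long as $\beta$ exceeds an absolute constant, which we may assume---then delivers $\bar B,\bar E=\mathcal{O}(\epsilon)$ and hence $\E[\G(\x_\tau)]=\mathcal{O}(\epsilon)$.

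The main obstacle is precisely this coupling. With $\eta=\Theta(1)$ the naive bound $\|\z_{t-1}-\x_{t-1}\|^{2}\le D^{2}$ makes the cross contribution $\eta^{2}D^{2}/(\alpha B_1)$ equal to $\Theta(1)$, far larger than the target $\mathcal{O}(\epsilon)$, so the STORM inequality cannot be closed in isolation as it can in Theorem~\ref{thm:2}. The escape is that the sub-algorithm output $\z_t$ used by the descent analysis admits the $\beta$-strong-convexity bound relating $\|\z_t-\x_t\|^{2}$ to $\delta_N+(-\min_\w g)$, which introduces a back-reaction of $\bar B$ into the STORM bound; the two inequalities must therefore be solved jointly rather than sequentially.
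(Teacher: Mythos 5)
Your proposal is correct and, at the structural level, follows the same path as the paper: the same decomposition $\E[\G(\x_\tau)]\le 4\beta\bar B+2\bar E$ from Proposition~2 of~\citet{NEURIPS2022_7e16384b}, the same STORM variance bound in which the $\eta^{2}\|\z_t-\x_t\|^{2}$ contribution is retained rather than crudely bounded by $\eta^{2}D^{2}$, the same descent-lemma treatment of the sub-algorithm error $\delta_N=\mathcal{O}(\beta D^{2}/N)$, and the same parameter scalings (including $B_0=\Theta(\epsilon^{-0.5})$). Where you genuinely differ is in how the dangerous $\|\z_t-\x_t\|^{2}$ terms are neutralized. The paper keeps the explicit negative quadratic $-\tfrac{\eta\beta}{8}\|\z_t-\x_t\|^{2}$ produced by the Frank--Wolfe gap inequality, carries it all the way to the final sum, and cancels the $\tfrac{10L_3\eta^{2}}{\alpha B_1}\|\z_t-\x_t\|^{2}$ term from the variance bound in one shot under the condition $\eta\le\sqrt{\beta^{2}\alpha B_1/(20L_3)}$. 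You instead convert $\|\z_t-\x_t\|^{2}$ into $\tfrac{4\delta_N}{\beta}+\tfrac{4}{\beta}(-\min_{\w}g)$ via $\beta$-strong convexity of $g$ (this identity is correct: $\|\z_t-\w^{\star}\|^{2}\le 2\delta_N/\beta$ and $\|\x_t-\w^{\star}\|^{2}\le \tfrac{2}{\beta}(-\min_\w g)$ since $g(\x_t)=0$) and then close a coupled linear system in $(\bar B,\bar E)$. The two devices are equivalent; yours makes the back-reaction explicit at the cost of a fixed-point argument. One small correction: the contraction condition is $\tfrac{c\,\eta^{2}}{\beta^{2}\alpha B_1}<1$, which you should satisfy by shrinking the absolute constant in $\eta=\mathcal{O}(1)$ (exactly the paper's $\eta\le\sqrt{\beta^{2}\alpha B_1/(20L_3)}$, a constant here since $\alpha B_1=\Theta(1)$), not by assuming $\beta$ is large --- $\beta$ is part of the definition of the gradient-mapping criterion and is not yours to enlarge.
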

\textbf{Remark:} The above theorem indicates that PMVR-v2, with a batch size of $\mathcal{O}(\epsilon^{-0.5})$, achieves an SFO complexity of $\mathcal{O}(\epsilon^{-1.5})$ and an LMO complexity of $\mathcal{O}(\epsilon^{-2})$. These rates are superior to methods for single-level objectives, i.e., NCGS~\citep{pmlr-v80-qu18a} and SGD+ICG~\citep{Balasubramanian2018ZerothOrderNS}, which require a larger batch size of $\mathcal{O}(\epsilon^{-1})$ and demand a worse SFO complexity of $\mathcal{O}\left(\epsilon^{-2}\right)$. Notably, our SFO complexity of $\mathcal{O}(\epsilon^{-1.5})$ also matches the lower bound for stochastic unconstrained optimization~\citep{Arjevani2019LowerBF}.

\subsection{Results for Optimal Gap}
In addition to the analyses for general non-convex functions in previous subsections, we further investigate the case for convex and strongly convex functions, defined as follows.
\begin{definition}
A function $F:\X \mapsto \mathbb{R} $ is convex if
\begin{align*}
  F(\mathbf{y}) \geq F(\mathbf{x})+\langle\nabla F(\mathbf{x}), \mathbf{y}-\mathbf{x}\rangle, \forall \mathbf{x}, \mathbf{y} \in \X.
\end{align*}
\end{definition}

\begin{definition}\label{def:sc}
A function $F:\X \mapsto \mathbb{R} $ is $\lambda$-strongly convex if 
 $\ \forall \mathbf{x}, \mathbf{y} \in \X$
\begin{align*}
  F(\mathbf{y}) \geq F(\mathbf{x})+\langle\nabla F(\mathbf{x}), \mathbf{y}-\mathbf{x}\rangle+\frac{\lambda}{2}\|\mathbf{y}-\mathbf{x}\|^{2}.
\end{align*}
\end{definition}
When the objective function is convex or strongly convex, a local optimal point becomes a global optimal point. Consequently, a natural criterion to consider is the optimal gap defined in equation~(\ref{OP}).

\paragraph{Convex functions:}First, we investigate the case for convex functions. To obtain theoretical guarantees, we design a stage-wise algorithm with the warm-start technique building on Algorithm~\ref{alg:1}. Specifically, we divide the entire process into $S$ stages, and for each stage $s$, we run Algorithm~\ref{alg:1} with a new set of parameters $T_s, \eta_s, \alpha_s$, and use the output of the previous stage $\left\{\x_{s-1}, \u_{s-1}, \v_{s-1}\right\}$ as the initial points. The complete method is shown in Algorithm~\ref{alg:3}, referred to as Stage-wise PMVR.
\begin{algorithm}[!tb]
	\caption{Stage-wise PMVR}
	\label{alg:3}
	\begin{algorithmic}[1]
	\STATE {\bfseries Input:} initial points $\left(\x_0,\u_0,\v_0\right)$
		\FOR{stage $s = 1$ {\bfseries to} $S$}
		\STATE $(\x_{s},\u_{s},\v_{s})$ = Algorithm~1 with parameters $T_{s}$, $\eta_s$, $\alpha_s$ and initial points $\left(\x_{s-1},\u_{s-1},\v_{s-1}\right)$
		\ENDFOR
	\STATE Return $\x_{S}$
	\end{algorithmic}
\end{algorithm}

By decreasing $\alpha_s$ and $\eta_s$, and increasing $T_s$ at each stage, we can ensure that the optimal gap is halved after each stage, such that $\E\left[F(\x_s) - F_{\star}\right] \leq \frac{1}{2} \E\left[F(\x_{s-1}) - F_{\star}\right]$. Denoting $S = \mathcal{O}(\log (\frac{\epsilon_1}{\epsilon}))$ and $\epsilon_s = \epsilon_1 /2^{s}$, where $\epsilon_1$ is a positive constant, we can obtain the guarantees for optimal gap in the following theorem.

\begin{theorem}\label{thm:3}
Setting $B_1 = \mathcal{O}(1)$,  $\eta_s = \mathcal{O}(\epsilon_s^2)$, $\alpha_s = \mathcal{O}(\epsilon_s^2)$, and $T_s = \mathcal{O}(\epsilon_s^{-2})$, we ensure $\E\left[F(\x_{S})\right]-\min_{\hat{\x}} F(\hat{\x}) \leq \epsilon$ in $\mathcal{O}(\epsilon^{-2})$ iterations.
\end{theorem}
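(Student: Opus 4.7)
The plan is to prove Theorem~\ref{thm:3} by an inductive, stage-wise argument that combines the STORM variance-reduction bounds already established for Theorem~\ref{thm:main} with convexity, upgrading the Frank--Wolfe gap control into a contraction on the optimality gap. The core object is a one-stage lemma: if at the start of stage $s$ we have $\E[F(\x_{s-1}) - F_\star] \leq \epsilon_{s-1}$ and $\E[\Norm{\v_{s-1} - \nabla F(\x_{s-1})}^2] = \mathcal{O}(\epsilon_{s-1}^2)$, then invoking Algorithm~\ref{alg:1} with $T_s = \mathcal{O}(\epsilon_s^{-2})$, $\eta_s = \mathcal{O}(\epsilon_s^2)$, $\alpha_s = \mathcal{O}(\epsilon_s^2)$ yields $\E[F(\x_s) - F_\star] \leq \epsilon_s = \epsilon_{s-1}/2$ together with the corresponding variance bound $\E[\Norm{\v_s - \nabla F(\x_s)}^2] = \mathcal{O}(\epsilon_s^2)$.

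For the one-stage analysis I would start from the smoothness inequality
$F(\x_{t+1}) \leq F(\x_t) + \eta_s \langle \nabla F(\x_t), \z_t - \x_t\rangle + \frac{L \eta_s^2 D^2}{2}$, and use the Frank--Wolfe minimality $\langle \v_t, \z_t\rangle \leq \langle \v_t, \x_\star\rangle$ together with convexity of $F$ to derive
$F(\x_{t+1}) - F_\star \leq (1 - \eta_s)(F(\x_t) - F_\star) + D \eta_s \Norm{\nabla F(\x_t) - \v_t} + \frac{L D^2 \eta_s^2}{2}.$
Taking expectation, applying Jensen's inequality to upper-bound $\E\Norm{\cdot}$ by $\sqrt{\E\Norm{\cdot}^2}$, rearranging, and averaging over $t = 1,\ldots,T_s$ (which handles the uniformly random index $\tau$ returned by Algorithm~\ref{alg:1}) gives
$\E[F(\x_\tau) - F_\star] \leq \frac{\epsilon_{s-1}}{\eta_s T_s} + \frac{D}{T_s}\sum_{t=1}^{T_s}\sqrt{\E[\Norm{\nabla F(\x_t) - \v_t}^2]} + \frac{L D^2 \eta_s}{2}.$

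The second term is where the warm-start is essential. Reusing the STORM recursion from the proof of Theorem~\ref{thm:main} with the new parameters, one has $\E[\Norm{\v_t - \nabla F(\x_t)}^2] \leq (1-\alpha_s)\E[\Norm{\v_{t-1} - \nabla F(\x_{t-1})}^2] + \mathcal{O}(\alpha_s^2 + \eta_s^2 D^2)$, and unrolling yields the bound $e^{-\alpha_s(t-1)} \mathcal{O}(\epsilon_{s-1}^2) + \mathcal{O}(\epsilon_s^2)$. Because $T_s \alpha_s = \Theta(1)$, the averaged square-root of the transient part still collapses to $\mathcal{O}(\epsilon_s)$, so each of the three terms in the recursion can be driven below $\epsilon_s/3$ by the stated orders of $\eta_s,\alpha_s,T_s$. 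The same tail bound implies $\E[\Norm{\v_{T_s+1} - \nabla F(\x_{T_s+1})}^2] = \mathcal{O}(\epsilon_s^2)$, closing the induction on both quantities simultaneously.

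Induction on $s$, starting from any $\epsilon_1 \geq \Delta_F$ guaranteed by Assumption~\ref{asm:stochastic4} (with the very first variance absorbed by the initial batch $B_0$), then delivers $\E[F(\x_S) - F_\star] \leq \epsilon_S \leq \epsilon$ after $S = \mathcal{O}(\log(\epsilon_1/\epsilon))$ stages, and the total iteration count is $\sum_{s=1}^S T_s = \mathcal{O}\bigl(\sum_s \epsilon_s^{-2}\bigr) = \mathcal{O}(\epsilon_S^{-2}) = \mathcal{O}(\epsilon^{-2})$, since the geometric sum is dominated by its last term. The main obstacle will be propagating the variance bound cleanly across stages: the inductive hypothesis has to carry \emph{both} the optimality-gap bound and the STORM variance bound in parallel, and the parameter schedule must be tuned so that the STORM bias/variance trade-off ($\alpha_s^2$ against $\eta_s^2/\alpha_s$) stays aligned with the $\eta_s$-contraction factor at every stage — if the initial transient variance $e^{-\alpha_s(t-1)}\mathcal{O}(\epsilon_{s-1}^2)$ is not flushed out within $T_s$ iterations, the averaged square root in the recursion would inflate by a factor of two between consecutive stages and the contraction would break.
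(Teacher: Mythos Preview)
Your overall architecture---stage-wise induction carrying both the optimality gap and a variance quantity, using the warm-started output $(\x_\tau,\u_\tau,\v_\tau)$ from the previous stage---is exactly what the paper does. The one-step recursion you derive from smoothness and convexity,
\[
F(\x_{t+1})-F_\star\leq (1-\eta_s)\bigl(F(\x_t)-F_\star\bigr)+D\eta_s\Norm{\nabla F(\x_t)-\v_t}+\tfrac{L_FD^2}{2}\eta_s^2,
\]
matches the paper's starting point, and your geometric summation of $T_s$ is the same.

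There is, however, a genuine technical gap in what you propagate inductively. You carry $\E\Norm{\v_{s-1}-\nabla F(\x_{s-1})}^2=\mathcal{O}(\epsilon_{s-1}^2)$ and claim the single-variable recursion
\[
\E\Norm{\v_t-\nabla F(\x_t)}^2\leq(1-\alpha_s)\,\E\Norm{\v_{t-1}-\nabla F(\x_{t-1})}^2+\mathcal{O}(\alpha_s^2+\eta_s^2 D^2).
\]
In the multi-level setting this recursion is \emph{not} what the proof of Theorem~\ref{thm:main} establishes: Lemmas~\ref{lem:3}--\ref{lem:4} give separate contractions on $\Norm{\v_t-\prod_i\nabla f_i(\u_t^{i-1})}^2$ and on $\sum_i\Norm{\u_t^i-f_i(\u_t^{i-1})}^2$, coupled through the $\Norm{\u_t^{i-1}-\u_{t-1}^{i-1}}^2$ terms. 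Lemma~\ref{lem:2} only gives the one-sided inequality $\Norm{\v-\nabla F}^2\leq 2\Norm{\v-\prod_i\nabla f_i(\u^{i-1})}^2+2KL_F^2\sum_i\Norm{\u^i-f_i(\u^{i-1})}^2$, so a bound on $\Norm{\v_{s-1}-\nabla F(\x_{s-1})}^2$ alone does \emph{not} control the two initial-condition quantities that Lemmas~\ref{lem:3}--\ref{lem:4} need at the start of stage $s$. The paper fixes this by defining
\[
\Gamma_s=\frac{1}{T_s}\sum_{t=1}^{T_s}\Bigl(\Norm{\v_t-\textstyle\prod_i\nabla f_i(\u_t^{i-1})}^2+2\sum_i\Norm{\u_t^i-f_i(\u_t^{i-1})}^2\Bigr)
\]
and inducts on $\E[\Gamma_s]\leq\epsilon_s^2$ alongside the gap bound; because $\tau$ is uniform, $\Gamma_{s-1}$ is exactly the expected initial condition for stage $s$, and the summed bounds of Lemmas~\ref{lem:3}--\ref{lem:4} give $\Gamma_s\leq \tfrac{2\Gamma_{s-1}}{\alpha_sT_s}+\mathcal{O}(\alpha_s/B_1+\eta_s^2/(\alpha_sB_1))$. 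Replace your inductive variance hypothesis with this $\Gamma_s$ and your argument goes through verbatim.
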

\textbf{Remark:} When using a constant batch size, we obtain $\mathcal{O}(\epsilon^{-2})$ complexity for both SFO and LMO, matching the results for single-level problems~\citep{Zhang2019OneSS}. 
Also note that the SFO complexity of $\mathcal{O}(\epsilon^{-2})$ is already optimal for convex objective functions~\citep{Agarwal2012InformationTheoreticLB}. 

\begin{theorem}\label{thm:3_0} (Large Batch) By setting $B_1=\mathcal{O}(\epsilon_s^{-1})$, $\eta_s = \mathcal{O}(\epsilon_s)$, $\alpha_s = \mathcal{O}(\epsilon_s)$, and $T_s = \mathcal{O}(\epsilon_s^{-1})$, we can ensure that $\E\left[F(\x_{S})\right]-\min_{\hat{\x}} F(\hat{\x}) \leq \epsilon$ in $\mathcal{O}(\epsilon^{-1})$ iterations.
\end{theorem}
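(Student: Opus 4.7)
The proof plan for Theorem~\ref{thm:3_0} mirrors the stage-wise induction used for Theorem~\ref{thm:3} but substitutes the large-batch parameter schedule of Theorem~\ref{thm:main_0} at each stage. The key reduction is the convex inequality that converts a Frank--Wolfe gap bound into an optimal gap bound: for $F$ convex and any $\x\in\X$, taking $\x^\star \in \arg\min_{\hat{\x}\in\X} F(\hat{\x})$ in the definition of convexity yields
\begin{equation*}
F(\x) - F_\star \leq \langle \nabla F(\x), \x - \x^\star\rangle \leq \max_{\hat{\x}\in\X}\langle -\nabla F(\x), \hat{\x} - \x\rangle = \F(\x).
\end{equation*}
Therefore, to show $\E[F(\x_S)] - F_\star \leq \epsilon$ it suffices to show that each outer stage of Algorithm~\ref{alg:3} halves the expected optimal gap, so that after $S = \mathcal{O}(\log(\epsilon_1/\epsilon))$ stages the target $\epsilon$ is reached.

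I would extract from the proof of Theorem~\ref{thm:main_0} the underlying per-run inequality for PMVR. Schematically, for any choice of parameters $(\eta, \alpha, B_1, T)$ and any initial point $\x_1$ with gap $\Delta = F(\x_1) - F_\star$, the algorithm satisfies
\begin{equation*}
\E[\F(\x_\tau)] \leq \frac{C_0 \Delta}{\eta T} + C_1 \eta + C_2 \alpha + \frac{C_3}{\alpha B_1 T} + \frac{C_4}{B_1},
\end{equation*}
for constants $C_0,\ldots,C_4$ depending only on $L_f, L_J, \LL_f, \LL_J, D, \sigma, \sigma_J, K$. With $\eta = c_\eta \epsilon$, $\alpha = c_\alpha \epsilon$, $B_1 = c_B \epsilon^{-1}$, $T = c_T \epsilon^{-2}$, and $\Delta = \mathcal{O}(1)$, this recovers Theorem~\ref{thm:main_0}. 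For the stage-wise argument, however, a warm-started $\Delta \leq 2\epsilon_s$ allows $T_s$ to shrink to $\mathcal{O}(\epsilon_s^{-1})$ while each of the five terms on the right-hand side remains $\Theta(\epsilon_s)$.

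The induction then closes as follows. Assume inductively that $\E[F(\x_{s-1}) - F_\star] \leq \epsilon_{s-1} = 2\epsilon_s$. Applying the per-run bound at stage $s$ with $\eta_s = c_\eta \epsilon_s$, $\alpha_s = c_\alpha \epsilon_s$, $B_1 = c_B \epsilon_s^{-1}$, $T_s = c_T \epsilon_s^{-1}$, and the warm-started initial iterate, each of the five terms can be made $\leq \epsilon_s/5$ provided the constants are chosen large enough. Thus $\E[\F(\x_s)] \leq \epsilon_s$, and the convex reduction gives $\E[F(\x_s) - F_\star] \leq \epsilon_s$, closing the induction. Summing the iterations, $\sum_{s=1}^S T_s = c_T \sum_{s=1}^S \epsilon_s^{-1} = c_T \sum_{s=1}^S 2^s / \epsilon_1 = \mathcal{O}(2^S/\epsilon_1) = \mathcal{O}(\epsilon^{-1})$, as claimed.

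The main technical obstacle is verifying the per-run bound in the warm-started setting. The STORM estimators $\u_t^i$ and $\v_t$ are not re-initialized from a fresh batch at the beginning of each stage but are inherited from the previous stage, so the ``$C_4/B_1$'' term that usually arises from the initial batch variance must instead be controlled by the end-of-stage expected error of the estimators from stage $s-1$. Handling this cleanly requires strengthening the induction hypothesis to track not only $\E[F(\x_s) - F_\star]$ but also $\E[\|\u_s^i - f_i(\u_s^{i-1})\|^2]$ for each $i$ and $\E[\|\v_s - \nabla F(\x_s)\|^2]$, and to show that these error quantities are also $\mathcal{O}(\epsilon_s)$ at the end of stage $s$. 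Once this strengthened induction is in place, the remainder of the argument parallels the proof of Theorem~\ref{thm:3} line by line, the only change being that the larger batch size and smaller per-stage iteration count yield the improved LMO complexity of $\mathcal{O}(\epsilon^{-1})$ while preserving the optimal SFO complexity of $\mathcal{O}(\epsilon^{-2})$.
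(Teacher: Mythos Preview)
Your approach is correct in spirit and largely parallels the paper's, but takes a slightly different route. The paper does \emph{not} pass through the Frank--Wolfe gap; instead it applies the convex descent recursion
\[
\E[F(\x_{t+1})]-F_\star \;\leq\; (1-\eta)\bigl(\E[F(\x_t)]-F_\star\bigr) + \eta D\,\E\|\nabla F(\x_t)-\v_t\| + \tfrac{L_F D^2}{2}\eta^2
\]
(taken from \citet{pmlr-v97-yurtsever19b}) directly, averages over $t$, and then invokes Lemma~\ref{lem:66}. Your detour via $F(\x)-F_\star\le\F(\x)$ is perfectly valid and leads to an identical stage-wise induction; the paper's direct route is marginally cleaner because it avoids re-deriving the $\Delta/(\eta T)$ term through the FW-gap machinery.

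Two technical corrections to your schematic. First, the per-run bound inevitably carries a \emph{square root} of the gradient-error term: after averaging you get $\tfrac{D}{T}\sum_t\E\|\nabla F(\x_t)-\v_t\|$, and Jensen gives $D\sqrt{\tfrac{1}{T}\sum_t\E\|\nabla F(\x_t)-\v_t\|^2}$, so the right-hand side looks like
\[
\frac{\Delta}{\eta T}+D\sqrt{\frac{L_2\Gamma_{s-1}}{\alpha T}+\frac{\alpha L_3}{B_1}+\frac{\eta^2 L_3}{\alpha B_1}}+\frac{L_F D^2}{2}\eta,
\]
not the purely additive form you wrote. Second, because of this square root, the strengthened induction hypothesis must track the inherited estimator error $\Gamma_s$ at order $\epsilon_s^{\,2}$, not $\epsilon_s$: with $\alpha_s T_s=\Theta(1)$, you need $\Gamma_{s-1}=\mathcal{O}(\epsilon_s^{2})$ to make $\sqrt{\Gamma_{s-1}/(\alpha_s T_s)}=\mathcal{O}(\epsilon_s)$. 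The paper carries exactly this pair $(\E[F(\x^s)-F_\star]\le\epsilon_s,\ \E[\Gamma_s]\le\epsilon_s^2)$ through the induction. With these adjustments your argument goes through and yields the stated $\mathcal{O}(\epsilon^{-1})$ total iterations.
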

\textbf{Remark:} The above theorem indicates that our algorithm achieves an SFO complexity of $\mathcal{O}(\epsilon^{-2})$ and an LMO complexity of $\mathcal{O}(\epsilon^{-1})$ with a large batch size of $\mathcal{O}(\epsilon^{-1})$, aligning with the rates of methods for single-level settings, such as Spider-FW~\citep{pmlr-v97-yurtsever19b}.

\paragraph{Strongly convex functions:} Compared with convex functions, we have an additional term $\frac{\lambda}{2} \Norm{\y-\x}^2$ for strongly convex objectives according to the Definition~\ref{def:sc}. So, instead of applying $\arg \min _{\x \in \X}\langle \x,\v_t\rangle$ in Step 9 of Algorithm~\ref{alg:1}, we aim to
\begin{align*}
    \min _{\w \in \X} g(\w,\x_t,\v_t) \coloneqq \left\{\langle \w,\v_t \rangle+\frac{\lambda}{4}\|\w-\x_t\|^{2}\right\}
\end{align*} 
via LMO in this case. To this end, we design a sub-algorithm that runs the Frank-Wolfe method~\citep{pmlr-v28-jaggi13} for several loops. In each loop $n$, we update as:
\begin{equation}
   \begin{split}
       \s  = &\arg \min _{\hat{\s} \in \X}\left\langle \v_t+\frac{\lambda}{2}\left(\w_n-\x_t\right), \hat{\s}\right\rangle, \\
&\w_{n+1}=\left(1-\gamma_{t}\right) \w_{n}+\gamma_{t} \s,
\end{split}
\end{equation}
where $\v_t+\frac{\lambda}{2}\left(\w_n-\x_t\right)$ is the gradient of $g(\w_n,\x_t,\v_t) $ with respect to parameter $\w_n$. Overall, we retain the stage-wise design of the original Algorithm 3, but replace step 9 in its basic Algorithm~\ref{alg:1} with the newly developed Algorithm~\ref{alg:multi-storm4}. This modification allows us to establish the optimal gap guarantees as detailed below. Here, we denote $\epsilon_s = \epsilon_1 /2^{s}$, where $\epsilon_1$ is a positive constant.
\begin{figure*}[ht]
\vskip 0.2in
	\begin{center}
		
			\includegraphics[width=0.37\textwidth]{./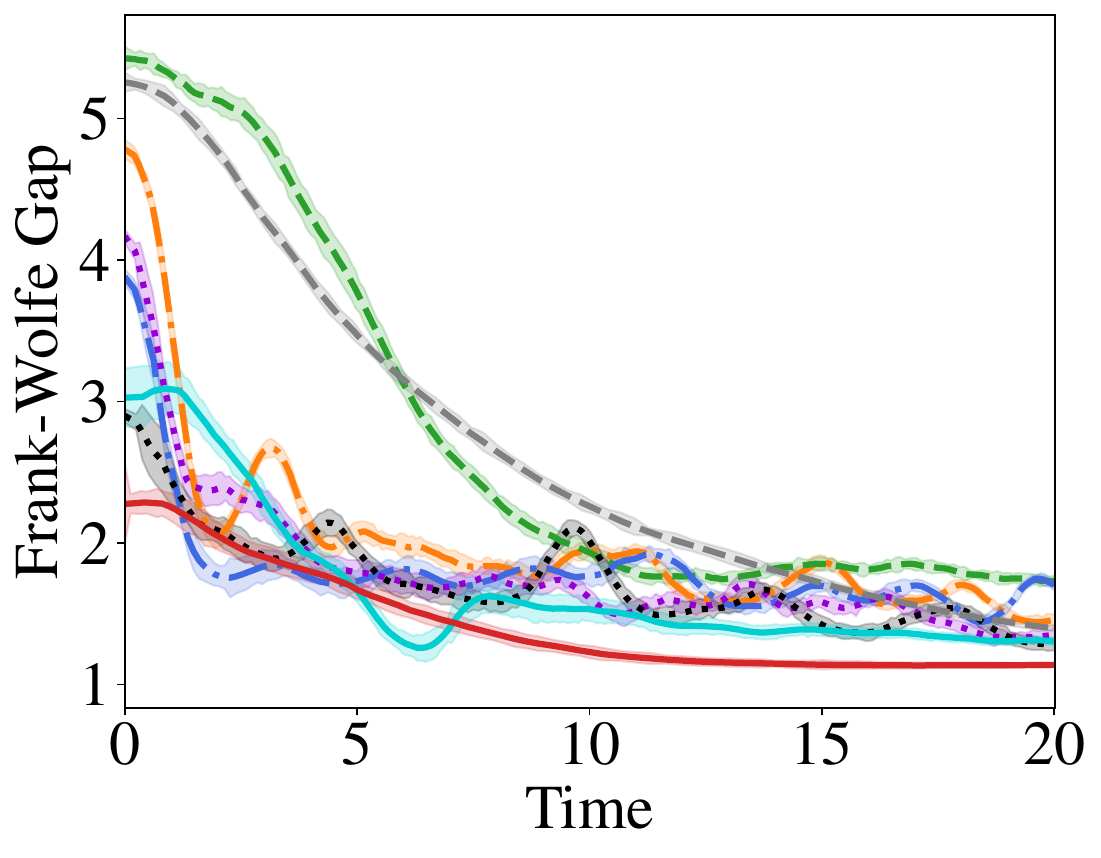}
			\hspace{0.5in}
   \includegraphics[width=0.37\textwidth]{./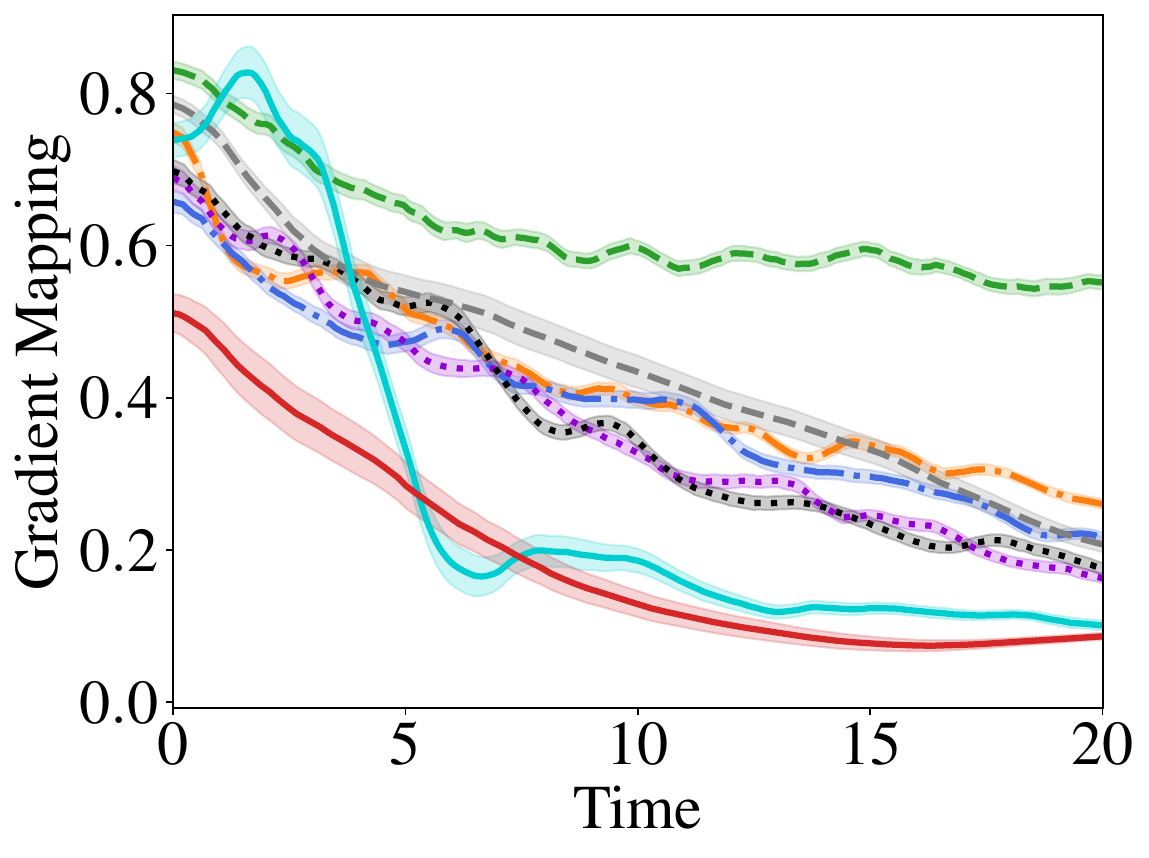}

		\subfigure{
			\includegraphics[width=0.99\textwidth]{./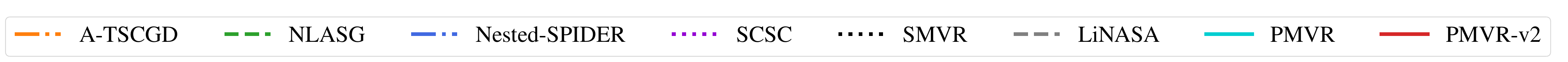}
		}

		\caption{Results for matrix optimization with low-rank constraints.}
		\label{fig:2}
	\end{center}
\vskip -0.2in
\end{figure*}

\begin{algorithm}[!tb]
	\caption{Stage-wise PMVR-v2}
	\label{alg:multi-storm4}
 Replace Step 9 of Algorithm~\ref{alg:1} with the following
 
	\begin{algorithmic}[1]
         \STATE Initialize $\w_1 = \x_t$
        \FOR{time step $n = 1$ {\bfseries to} $N$}
	\STATE Compute $ \s
      = \arg \min _{\hat{\s} \in \X}\left\langle \v_t+\frac{\lambda}{2}\left(\w_n-\x_t\right), \hat{\s}\right\rangle$
	\STATE Set $\w_{n+1}=\left(1-\gamma_{t}\right) \w_{n}+\gamma_{t} \s$
	\ENDFOR
     \STATE Set $\z_t = \w_{N+1}$
\end{algorithmic}
\end{algorithm}

\begin{theorem}\label{thm:4} 
Setting $B_1 = \mathcal{O}(1)$, $N=\mathcal{O}(\lambda \epsilon^{-1})$,  $\eta_s = \mathcal{O}(\lambda \epsilon_s)$, $\alpha_s = \mathcal{O}(\lambda \epsilon_s)$, and $T_s = \mathcal{O}(\lambda^{-1}\epsilon_s^{-1})$, we ensure $\E\left[F(\x_{S})\right]-\min_{\hat{\x}} F(\hat{\x}) \leq \epsilon$ in $S = \mathcal{O}(\log (\frac{\epsilon_1}{\epsilon}))$ stages.
\end{theorem}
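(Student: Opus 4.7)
The plan is to prove Theorem~\ref{thm:4} by induction on the stage index $s$, mirroring the stage-wise argument of Theorem~\ref{thm:3} but leveraging strong convexity to obtain a geometric contraction across stages. Let $\x_\star:=\arg\min_{\x\in\X}F(\x)$ and $\epsilon_s=\epsilon_1/2^{s}$. The inductive hypothesis is $\E[F(\x_{s-1})-F_\star]\le \epsilon_{s-1}$, and the inductive step is to show that the inner loop of Algorithm~\ref{alg:multi-storm4} driven by parameters $(\eta_s,\alpha_s,N,T_s)$ returns a point $\x_s$ with $\E[F(\x_s)-F_\star]\le \epsilon_{s-1}/2=\epsilon_s$. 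Iterating this for $S=\mathcal{O}(\log(\epsilon_1/\epsilon))$ stages then produces the target accuracy.

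Within a single stage I would first establish a per-step descent relation. Define $\w_t^\star:=\arg\min_{\w\in\X}g(\w,\x_t,\v_t)$. Since $g(\cdot,\x_t,\v_t)$ is $(\lambda/2)$-strongly convex and smooth on a domain of diameter $D$, the $N$ inner Frank--Wolfe iterations of Algorithm~\ref{alg:multi-storm4} yield $g(\z_t,\x_t,\v_t)-g(\w_t^\star,\x_t,\v_t)=\mathcal{O}(D^2/N)$ by the standard FW analysis of~\citet{pmlr-v28-jaggi13}. Comparing $g$ at $\z_t$ with $g$ at $\x_\star\in\X$, combining with the $L_F$-smoothness bound on $F$ and the strong convexity inequality
\begin{align*}
F(\x_\star)\ge F(\x_t)+\langle \nabla F(\x_t),\x_\star-\x_t\rangle+\tfrac{\lambda}{2}\|\x_\star-\x_t\|^2,
\end{align*}
and splitting $\nabla F(\x_t)=\v_t+(\nabla F(\x_t)-\v_t)$, I obtain a recursion of the form
\begin{align*}
\E[F(\x_{t+1})-F_\star]\le (1-c\lambda\eta_s)\,\E[F(\x_t)-F_\star]+\tfrac{\eta_s}{\lambda}\E[\|\v_t-\nabla F(\x_t)\|^2]+\mathcal{O}(\eta_s D^2/N)+\mathcal{O}(\eta_s^2)
\end{align*}
for some absolute constant $c>0$, where the contraction factor comes from the $\tfrac{\lambda}{2}\|\x_\star-\x_t\|^2$ term combined with the lower bound $F(\x_t)-F_\star\le \langle\nabla F(\x_t),\x_t-\x_\star\rangle$.

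The STORM error $\E[\|\v_t-\nabla F(\x_t)\|^2]$ is then controlled by the same nested variance-reduction analysis used for Theorem~\ref{thm:main} and Theorem~\ref{thm:3}, which delivers a per-iteration recursion whose averaged value over the stage is $\mathcal{O}(\alpha_s)+\mathcal{O}(\eta_s^2/\alpha_s)+\mathcal{O}(E_{s-1}/(T_s\alpha_s))$, where $E_{s-1}$ is the warm-started initial estimator error inherited from the previous stage. Unrolling the contraction $(1-c\lambda\eta_s)^{T_s}\approx e^{-\Theta(1)}$ and substituting $\eta_s=\alpha_s=\Theta(\lambda\epsilon_s)$, $T_s=\Theta(\lambda^{-1}\epsilon_s^{-1})$ and $N=\Theta(\lambda/\epsilon)\ge \Theta(\lambda/\epsilon_s)$, every additive term evaluates to $\mathcal{O}(\epsilon_s)$: the sub-problem residual contributes $\eta_s D^2/N=\mathcal{O}(\epsilon_s^2)$, the gradient noise floor contributes $(\eta_s/\lambda)(\alpha_s+\eta_s^2/\alpha_s)=\mathcal{O}(\epsilon_s^2)$, and the warm-start residual is $\mathcal{O}(\epsilon_{s-1}/(T_s\alpha_s))=\mathcal{O}(\epsilon_s)$. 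This closes the induction and yields the claimed stage count.

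The hard part is the simultaneous balancing of three approximations: the STORM estimator for $\v_t$, the inexact FW solver for $g(\cdot,\x_t,\v_t)$, and the warm start of $(\u,\v)$ across stage boundaries. All three must scale below $\epsilon_s$, which is what dictates the specific coupling $\eta_s=\alpha_s=\Theta(\lambda\epsilon_s)$, $T_s=\Theta(\lambda^{-1}\epsilon_s^{-1})$ and the stage-independent choice $N=\Theta(\lambda/\epsilon)$. Proving that the warm-started estimators from stage $s-1$ contribute only $\mathcal{O}(\epsilon_s)$ to the stage $s$ recursion---rather than being amplified---is the subtle technical point that distinguishes this proof from the purely convex analysis of Theorem~\ref{thm:3}, where no within-stage contraction is needed and the STORM recursion can be closed without accounting for inter-stage coupling.
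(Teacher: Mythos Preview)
Your stage-wise induction with warm starts is exactly the paper's strategy, and your identification of the three error sources (inner FW residual, STORM noise floor, warm-start carryover) is correct. However, the per-step contraction you write is off in a way that would break the argument for small $\lambda$. The paper obtains the recursion
\[
F(\x_{t+1})-F_\star \le (1-\eta_s)\bigl(F(\x_t)-F_\star\bigr)+\tfrac{4\eta_s}{\lambda}\|\nabla F(\x_t)-\v_t\|^2+\tfrac{\eta_s\lambda D^2}{N},
\]
with contraction factor $(1-\eta_s)$, not $(1-c\lambda\eta_s)$. The mechanism is: after comparing $g(\z_t,\x_t,\v_t)$ to $g(\x_\star,\x_t,\v_t)$ and invoking strong convexity, the term $\eta_s\langle\nabla F(\x_t),\x_\star-\x_t\rangle$ already yields $-\eta_s(F(\x_t)-F_\star)$ by plain convexity; the leftover $-\tfrac{\eta_s\lambda}{4}\|\x_t-\x_\star\|^2$ is spent entirely, via Young's inequality, on absorbing the cross term $\langle\nabla F(\x_t)-\v_t,\z_t-\x_\star\rangle$, and this is precisely where the $1/\lambda$ on the noise comes from. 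With your stated $(1-c\lambda\eta_s)$ and the prescribed $\eta_s=\Theta(\lambda\epsilon_s)$, $T_s=\Theta(\lambda^{-1}\epsilon_s^{-1})$, one gets $(1-c\lambda\eta_s)^{T_s}\approx e^{-\Theta(\lambda)}$, not $e^{-\Theta(1)}$, so the per-stage halving would fail when $\lambda$ is small.

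The paper also does not unroll the contraction: since Algorithm~\ref{alg:1} returns a uniformly random iterate, the stage output satisfies
\[
\E[F(\x^{s})-F_\star]\le \frac{\E[F(\x^{s-1})-F_\star]}{\eta_s T_s}+\frac{4}{\lambda T_s}\sum_{t=1}^{T_s}\E\|\nabla F(\x_t)-\v_t\|^2+\frac{\lambda D^2}{N},
\]
and one simply plugs in the STORM bound (Lemma~\ref{lem:66}). Finally, your last paragraph mischaracterizes the distinction from Theorem~\ref{thm:3}: the convex proof there also carries an auxiliary estimator-error quantity $\Gamma_s$ through the induction (with hypothesis $\E[\Gamma_s]\le\epsilon_s^2$), so the inter-stage coupling is handled the same way; the only genuine difference in the strongly convex case is the $1/\lambda$ factor on the noise and the matching hypothesis $\E[\Gamma_s]\le\lambda\epsilon_s$.
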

\textbf{Remark:} When employing constant batch sizes, we can achieve an SFO complexity of $\mathcal{O}(\lambda^{-1}\epsilon^{-1})$ and an LMO complexity of $\mathcal{O}(\epsilon^{-2})$. Notably, the $\mathcal{O} (\lambda^{-1}\epsilon^{-1})$ SFO complexity we obtained is already optimal for stochastic unconstrained strongly convex problems~\citep{Agarwal2012InformationTheoreticLB} and is thus unimprovable.

\begin{theorem}\label{thm:4_0} (Large Batch)
By setting $B_1 = \mathcal{O}(\epsilon_s^{-1})$, $N= \mathcal{O}(\lambda \epsilon^{-1})$,  $\eta_s = \mathcal{O}(\lambda)$, $\alpha_s = \mathcal{O}(\lambda)$, and $T_s = \mathcal{O}(\lambda^{-1})$, we can guarantee that $\E\left[F(\x_{S})\right]-\min_{\hat{\x}} F(\hat{\x}) \leq \epsilon$ within $S = \mathcal{O}(\log (\frac{\epsilon_1}{\epsilon}))$ stages.
\end{theorem}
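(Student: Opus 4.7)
The plan is to mirror the analysis of Theorem~\ref{thm:4} but exploit the large batch $B_1=\mathcal{O}(\epsilon_s^{-1})$ to allow larger step sizes $\eta_s=\alpha_s=\mathcal{O}(\lambda)$ while halving the optimality gap in each stage in only $T_s=\mathcal{O}(\lambda^{-1})$ iterations. The stage-wise framework is unchanged: starting from $\epsilon_1\ge F(\x_0)-F_\star$, it suffices to show that one invocation of the basic routine (Algorithm~\ref{alg:1} with Step~9 replaced by Algorithm~\ref{alg:multi-storm4}) produces $\x_s$ with $\E[F(\x_s)-F_\star]\le\tfrac{1}{2}\E[F(\x_{s-1})-F_\star]$; geometric summation then yields $\E[F(\x_S)-F_\star]\le\epsilon$ after $S=\mathcal{O}(\log(\epsilon_1/\epsilon))$ stages. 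Summing $B_1 T_s=\mathcal{O}(\lambda^{-1}\epsilon_s^{-1})$ across stages is dominated by the last one, giving $\mathcal{O}(\lambda^{-1}\epsilon^{-1})$ SFO calls, while $NT_s=\mathcal{O}(\epsilon^{-1})$ per stage accumulates to the stated LMO rate (up to the $\log$ factor absorbed in the theorem statement).

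\textbf{Per-iteration descent.} Three ingredients combine to yield the halving property. First, a standard Frank-Wolfe analysis of Algorithm~\ref{alg:multi-storm4} applied to the $\tfrac{\lambda}{2}$-strongly-convex quadratic $g(\w,\x_t,\v_t)$ with $\gamma_n=2/(n+1)$ gives, after $N=\mathcal{O}(\lambda\epsilon^{-1})$ inner iterations, a subproblem error $g(\z_t,\x_t,\v_t)-\min_{\w\in\X}g(\w,\x_t,\v_t)\le\mathcal{O}(\epsilon)$. Second, combining the optimality condition of the exact minimizer of $g(\cdot,\x_t,\v_t)$ with the strong convexity and smoothness of $F$ applied to the update $\x_{t+1}=\x_t+\eta_s(\z_t-\x_t)$, I would derive a recursion of the form
\begin{equation*}
\E[F(\x_{t+1})-F_\star]\le(1-c_1\eta_s)\E[F(\x_t)-F_\star]+\tfrac{c_2\eta_s}{\lambda}\E[\Norm{\v_t-\nabla F(\x_t)}^2]+c_3\eta_s\epsilon.
\end{equation*}
Third, a STORM-style variance analysis (adapted from Theorem~\ref{thm:main_0}) bounds the averaged tracking error via $\frac{1}{T_s}\sum_t\E[\Norm{\v_t-\nabla F(\x_t)}^2]\lesssim\frac{1}{\alpha_sT_sB_0}+\frac{\alpha_s}{B_1}+\frac{\eta_s^2}{\alpha_s B_1}$, up to constants depending on Assumptions~\ref{asm:1}--\ref{asm:stoc_smooth3}. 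With $\eta_s=\alpha_s=\Theta(\lambda)$ and $B_1=\Theta(\epsilon_s^{-1})$, every term on the right is $\mathcal{O}(\lambda\epsilon_s)$ (provided a sufficient warm-start batch $B_0$ is used at stage $1$ and subsequent stages inherit $\u_{s-1},\v_{s-1}$). Hence the additive error term in the recursion accumulates to $\mathcal{O}(T_s\eta_s\epsilon_s)=\mathcal{O}(\epsilon_s)$, while the contraction $(1-c_1\eta_s)^{T_s}\le e^{-c_1}\le\tfrac14$ over $T_s=\Theta(\lambda^{-1})$ steps leaves enough room to enforce the halving after choosing constants appropriately.

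\textbf{Main obstacle.} The delicate step is the third one: propagating the large-batch STORM bound through the nested multi-level structure so that the per-level errors on the inner trackers $\u_t^i$ compose without wiping out the $B_1^{-1}$ improvement. I would control this via a coupled Lyapunov potential
\begin{equation*}
\Phi_t=\E[F(\x_t)-F_\star]+\sum_{i=1}^{K}a_i\,\E[\Norm{\u_t^i-f_i(\u_t^{i-1})}^2]+b\,\E[\Norm{\v_t-\nabla F(\x_t)}^2],
\end{equation*}
choosing the coefficients $a_i,b$ inductively so that the cross-level drift introduced when $\x_t\mapsto\x_{t+1}$ (which scales with $\eta_s\Norm{\z_t-\x_t}\le\eta_s D$) is absorbed by the strongly-convex descent on $F$, and using the average-smoothness Assumption~\ref{asm:stoc_smooth3} to recursively convert $\Norm{\u_t^i-\u_{t-1}^i}$ into $\Norm{\x_t-\x_{t-1}}$ across levels. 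Once this coupled recursion is closed, substituting the parameters and summing the geometric series across stages mirrors the convex case of Theorem~\ref{thm:3_0}.
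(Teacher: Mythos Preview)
Your proposal is correct and shares the paper's skeleton---per-step descent plus STORM variance bound plus stage-wise halving---but packages the bookkeeping differently in two places. First, the paper does not unroll the contraction $(1-c_1\eta_s)^{T_s}$ to control the last iterate; since Algorithm~\ref{alg:1} returns a \emph{random} iterate $\x_\tau$, it instead sums the one-step recursion
\[
F(\x_{t+1})-F_\star\le(1-\eta)\bigl(F(\x_t)-F_\star\bigr)+\tfrac{4\eta}{\lambda}\Norm{\nabla F(\x_t)-\v_t}^2+\tfrac{\eta\lambda D^2}{N}
\]
to get an averaged bound $\frac{1}{T_s}\sum_t\E[F(\x_t)-F_\star]\le\frac{\E[F(\x^{s-1})-F_\star]}{\eta_sT_s}+\frac{4}{\lambda}\cdot\text{(avg.\ variance)}+\frac{\lambda D^2}{N}$, which directly controls $\E[F(\x_\tau)-F_\star]$. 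Second, the coupled Lyapunov $\Phi_t$ with per-level weights $a_i$ is heavier than what the paper needs: Lemmas~\ref{lem:2}--\ref{lem:66} already collapse the nested tracker errors into the single scalar $\Gamma_s=\frac{1}{T_s}\sum_t\bigl(\Norm{\v_t-\prod_i\nabla f_i(\u_t^{i-1})}^2+2\sum_i\Norm{\u_t^i-f_i(\u_t^{i-1})}^2\bigr)$, so the paper runs a plain two-variable induction showing $\E[F(\x^s)-F_\star]\le\epsilon_s$ and $\E[\Gamma_s]\le\lambda\epsilon_s$ simultaneously, with the warm start supplying $\Gamma_{s-1}$ as the initial tracking error of stage $s$. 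Your Lyapunov route would also close, but the paper's separation of the variance lemmas from the descent step is simpler and avoids having to tune the $a_i$'s.
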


\textbf{Remark:} By using a large batch size, the SFO complexity remains on the same order, and the LMO complexity can be further improved to $\mathcal{O}(\epsilon^{-1})$, matching the existing results for strongly convex functions in the single-level setting~\citep{doi:10.1137/140992382}.

\section{Experiments}\label{sec:4}
In this section, we evaluate the effectiveness of our proposed methods through numerical experiments on three different problems. We compare our methods with existing stochastic multi-level algorithms, including A-TSCGD~\citep{Yang2019MultilevelSG}, NLASG~\citep{balasubramanian2020stochastic}, Nested-SPIDER~\citep{Zhang2021MultiLevelCS}, SCSC~\citep{chen2021solving}, and SMVR~\citep{jiang2022optimal}. We also compare with the previous projection-free multi-level algorithm LiNASA+ICG~\citep{NEURIPS2022_7e16384b}. For our algorithm, we select the momentum parameter $\alpha$ from the set $\{0.01, 0.03, 0.05, 0.1,0.3\}$ and search the parameter $N$ for PMVR-v2 from the range $\{10,50,100\}$. For the other methods, we adopt the hyper-parameters recommended in their original papers or perform a grid search to select the best ones. The learning rate is fine-tuned within the range of $\{0.001,0.005, 0.01,0.05,0.1\}$. All experiments are conducted on a personal laptop.

\subsection{Matrix Optimization with Low-Rank Constraints}
Following the previous literature on projection-free multi-level optimization~\cite{NEURIPS2022_7e16384b}, we also conduct experiments on matrix optimization with low-rank constraints. Specifically, we consider the matrix-valued single-index model~\cite{pmlr-v70-yang17a} with a low-rank constraint, expressed as:
\begin{gather*}
    y=\left|\langle A, B^{\star}\rangle_{F}\right|^{2} + \epsilon, \quad
    \text{rank}(B^{\star}) \leq s,
\end{gather*}
where $A,B \in \R^{m\times n}$, $\epsilon \sim \mathcal{N}\left(0, \sigma^{2}\right),\langle\cdot, \cdot\rangle_{F}$ denotes the Frobenius inner product, and $s$ is a positive integer smaller than both $m$ and $n$. To recover a low-rank matrix $B^\star$ given $A$ and $y$, we can optimize the mean squared loss function with a nuclear norm constraint.  The objective function can be formulated as:
\begin{align*}
    \min_{B} F(B)&=\E\left[\left(y-\left|\langle A, B\rangle_{F}\right|^{2}\right)^{2}\right]\\
    &\text{s.t. } \ \|B\|_{\star} \leq s.
\end{align*}
Note that in this case, the projection operation onto the nuclear norm ball requires a full singular value decomposition~(SVD), while the linear optimization used by Frank-Wolfe update only requires computing the singular vector pair corresponding to the largest singular value, which is much cheaper~\cite{pmlr-v28-jaggi13}. In line with the setup in~\citet{NEURIPS2022_7e16384b}, we define the matrix $B^{\star}= v v^{\top} /\left\|v v^{\top}\right\|_{\star}$, and the matrix $A$ is generated by $A=I+E$, with $E_{i, j} \stackrel{i . i . d .}{\sim} \mathcal{N}\left(0, 0.3\right)$. 

In Figure~\ref{fig:2}, we plot the value of Frank-Wolfe gap, as well as gradient mapping, against the running time of each algorithm. All the curves are averaged over 50 runs. As can be seen, our PMVR and PMVR-v2 methods demonstrate a more rapid decrease in both criteria compared to other approaches, especially for the gradient mapping criterion, demonstrating the superiority of our proposed methods.

\begin{figure*}[t]
\vskip 0.2in
	\begin{center}
		\subfigure{
			\includegraphics[width=0.32\textwidth]{./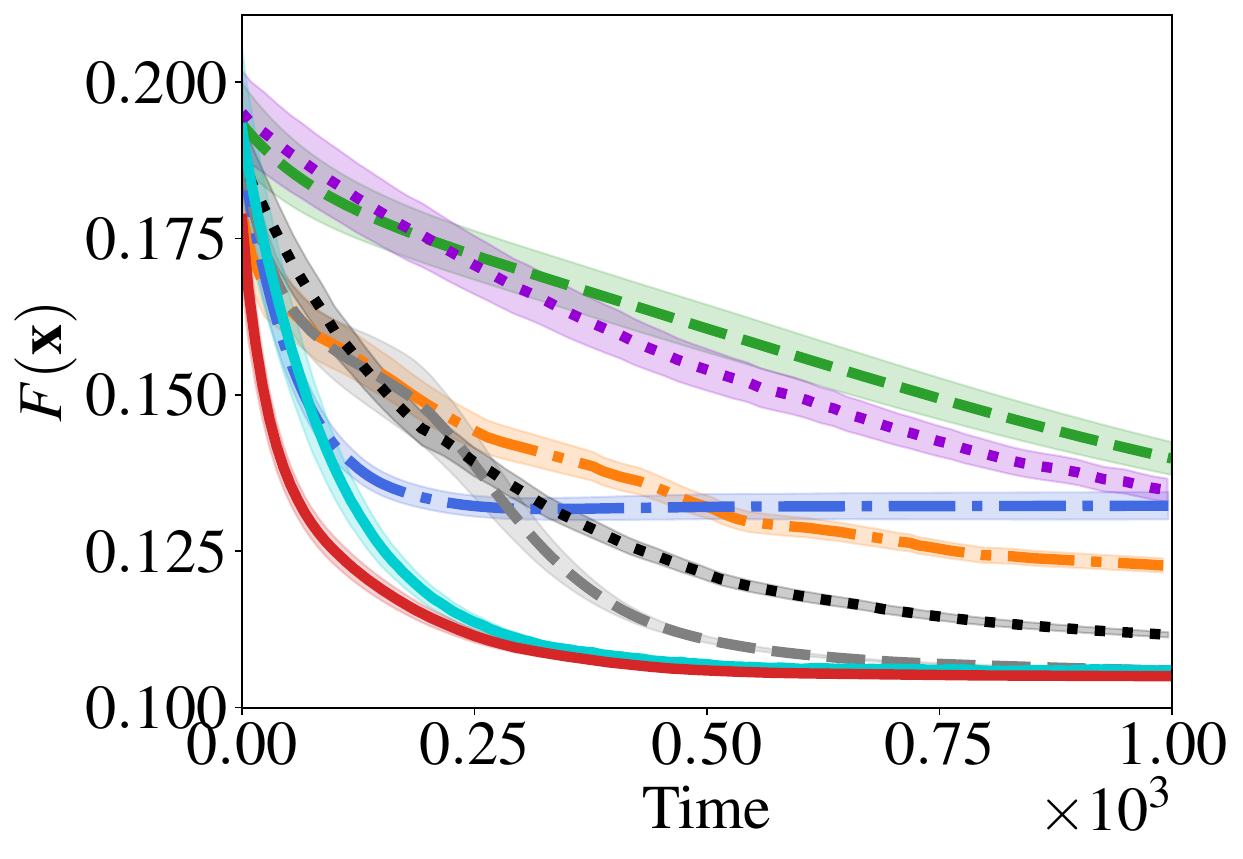}}
   \setcounter{subfigure}{0}
   \subfigure[ Industry-10]{
			\includegraphics[width=0.32\textwidth]{./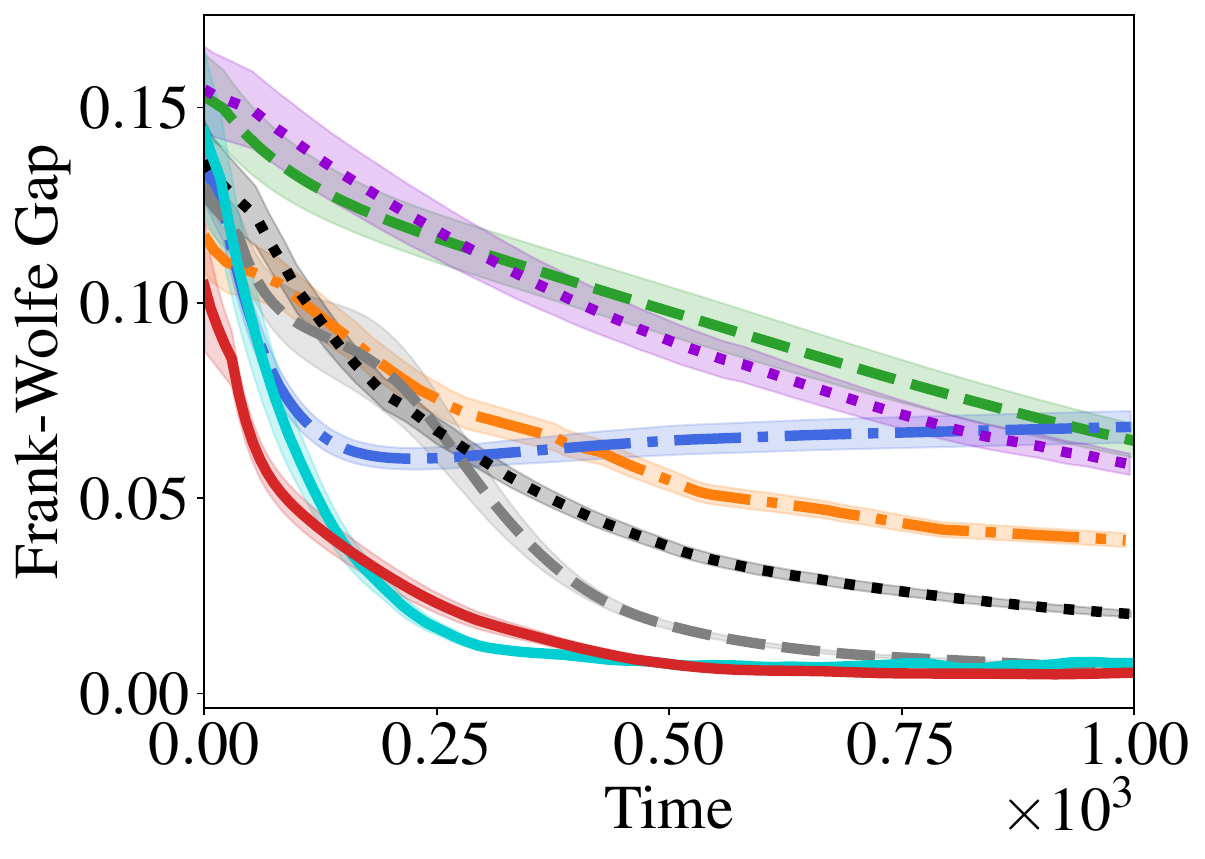}}
   \subfigure{
			\includegraphics[width=0.32\textwidth]{./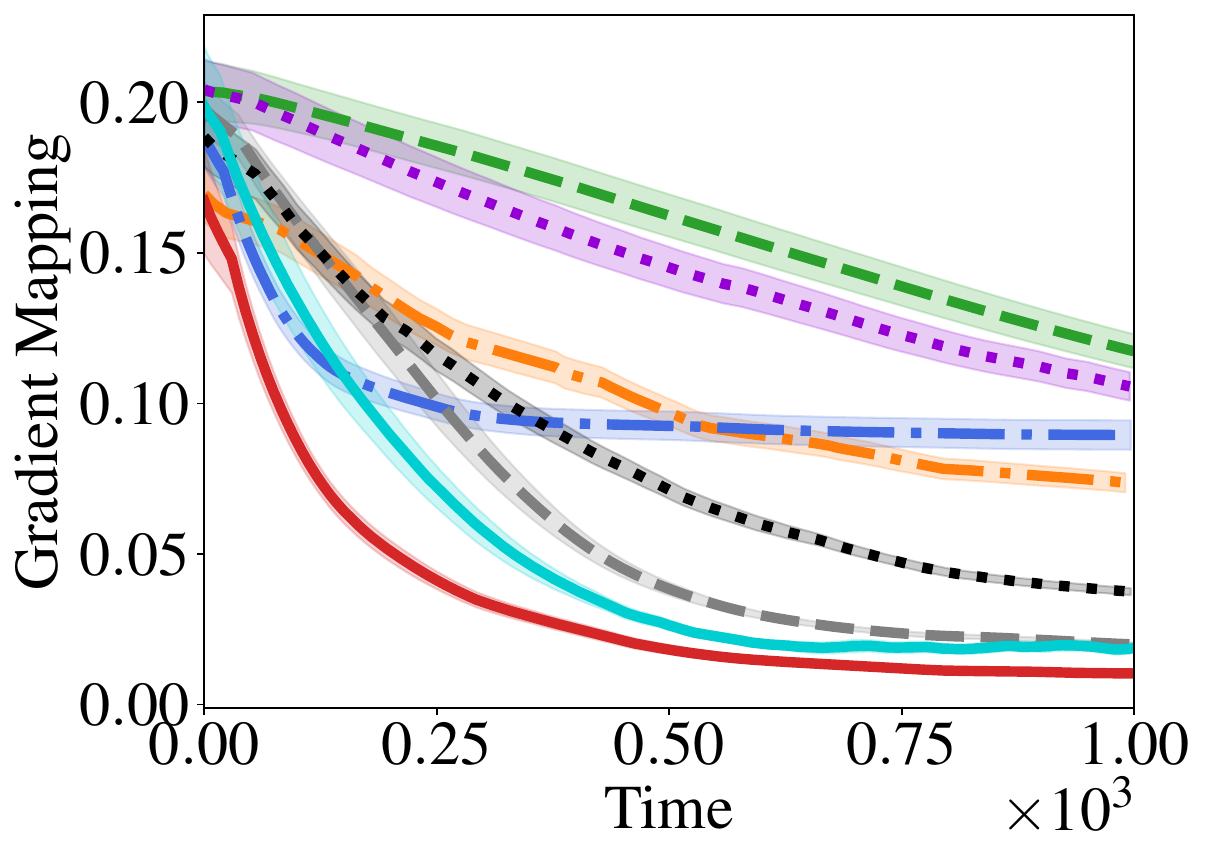}
		}
  \setcounter{subfigure}{0}
		\subfigure{
			\includegraphics[width=0.32\textwidth]{./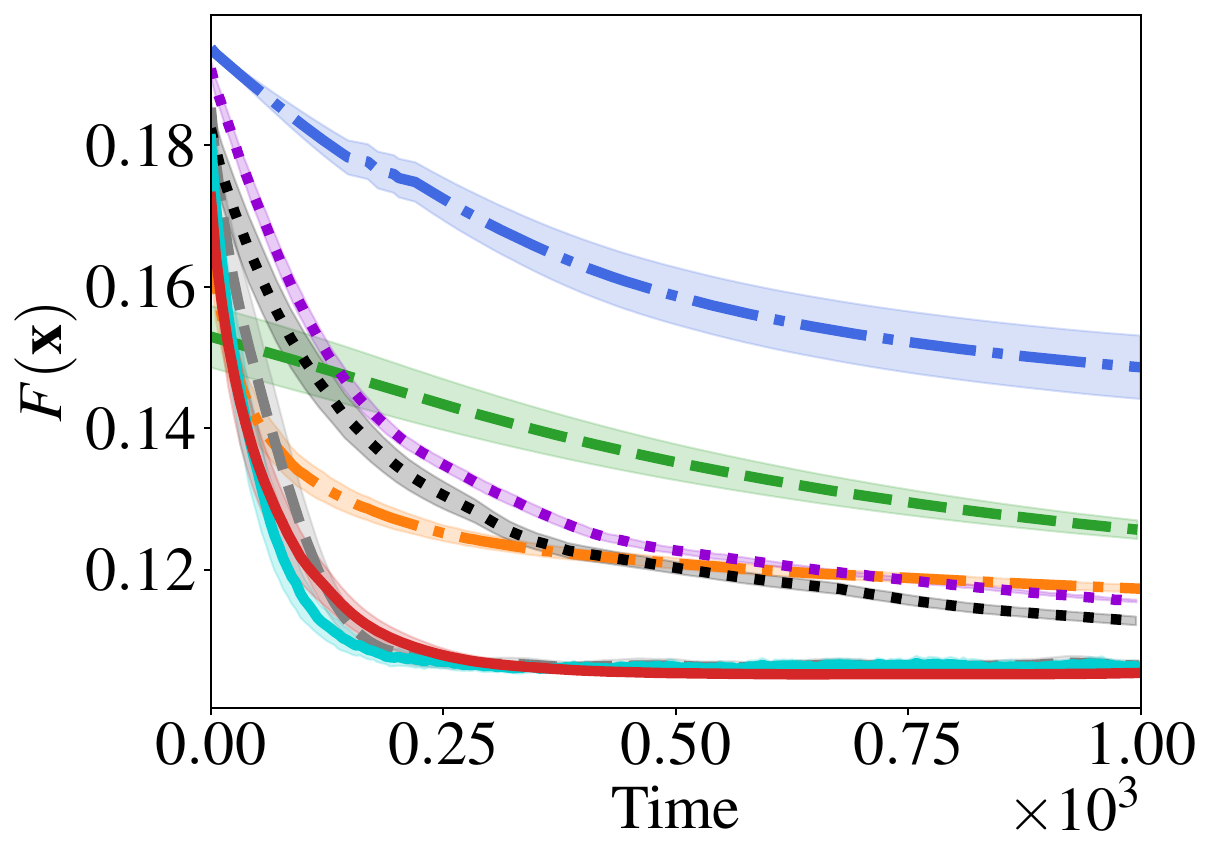}}
   \subfigure[ Industry-12]{
			\includegraphics[width=0.32\textwidth]{./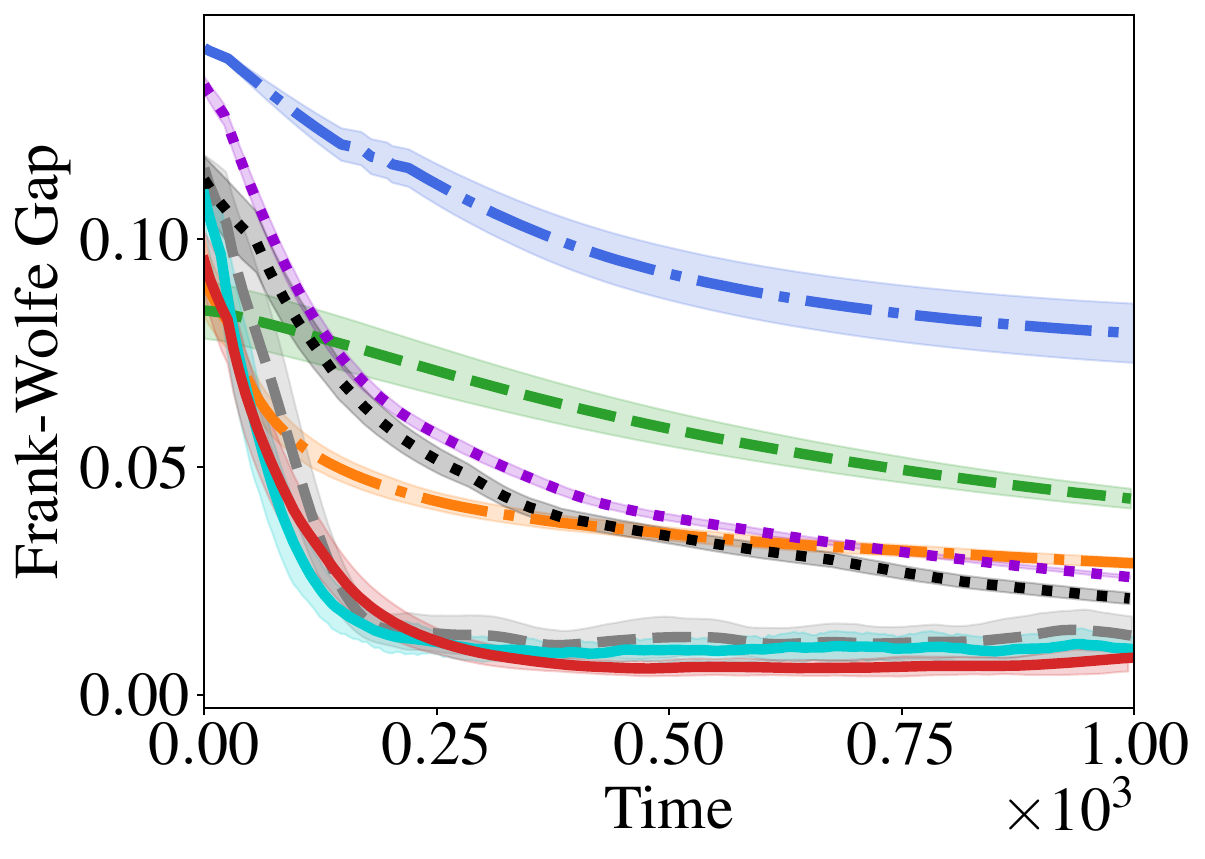}}
   \subfigure{
			\includegraphics[width=0.32\textwidth]{./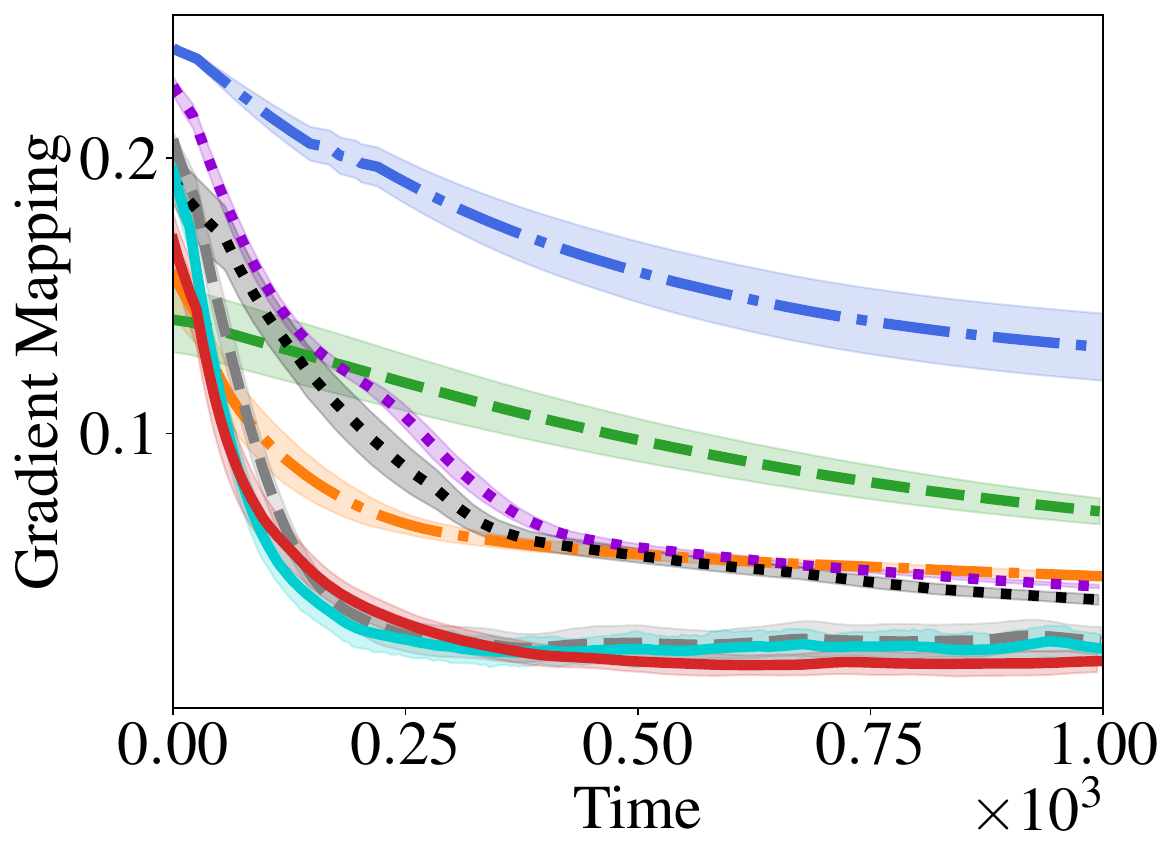}
		}
		\subfigure{
			\includegraphics[width=0.99\textwidth]{./Fig/legend.pdf}
		}
		\caption{Results for risk-averse portfolio optimization.}
		\label{fig:1}
	\end{center}
\vskip -0.2in
\end{figure*}

\begin{figure*}[t]
\vskip 0.2in
	\begin{center}
		\subfigure{
			\includegraphics[width=0.32\textwidth]{./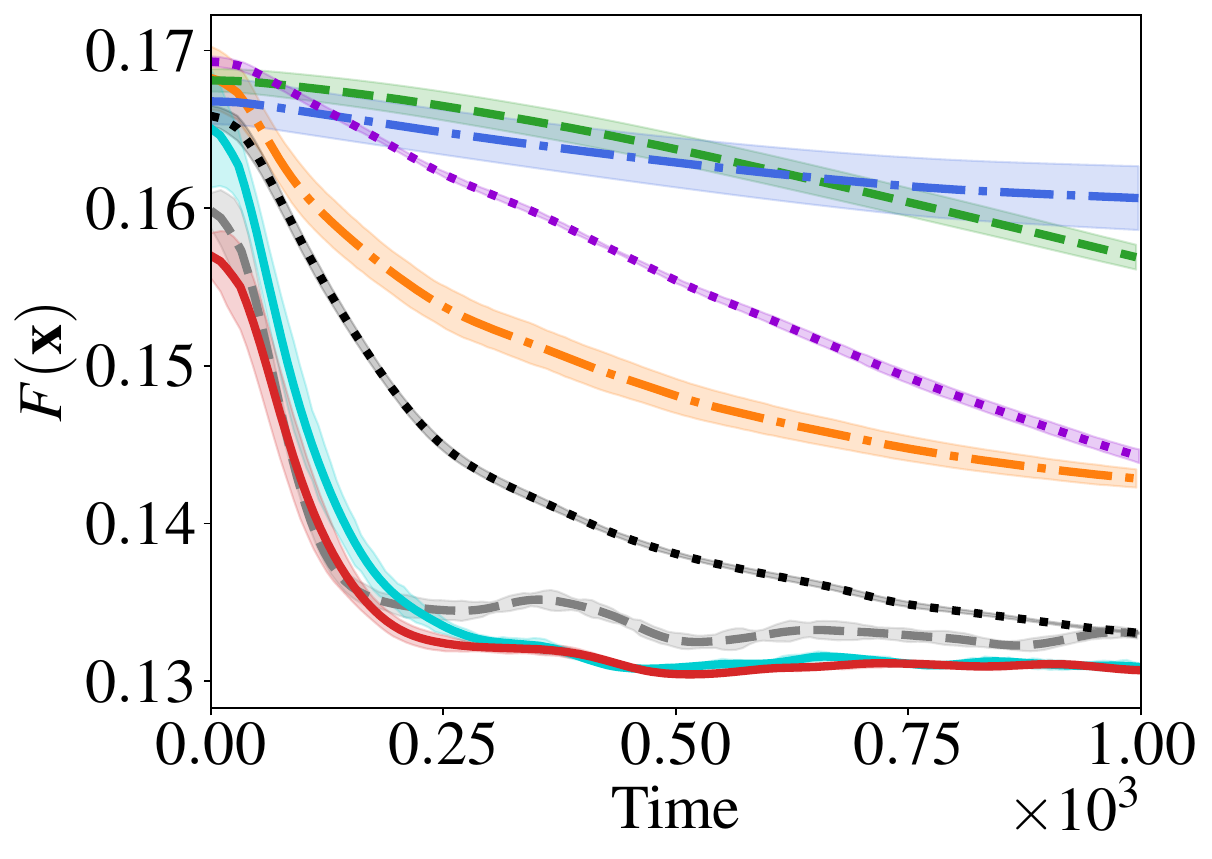}}
   \setcounter{subfigure}{0}
   \subfigure[ Industry-10]{
			\includegraphics[width=0.32\textwidth]{./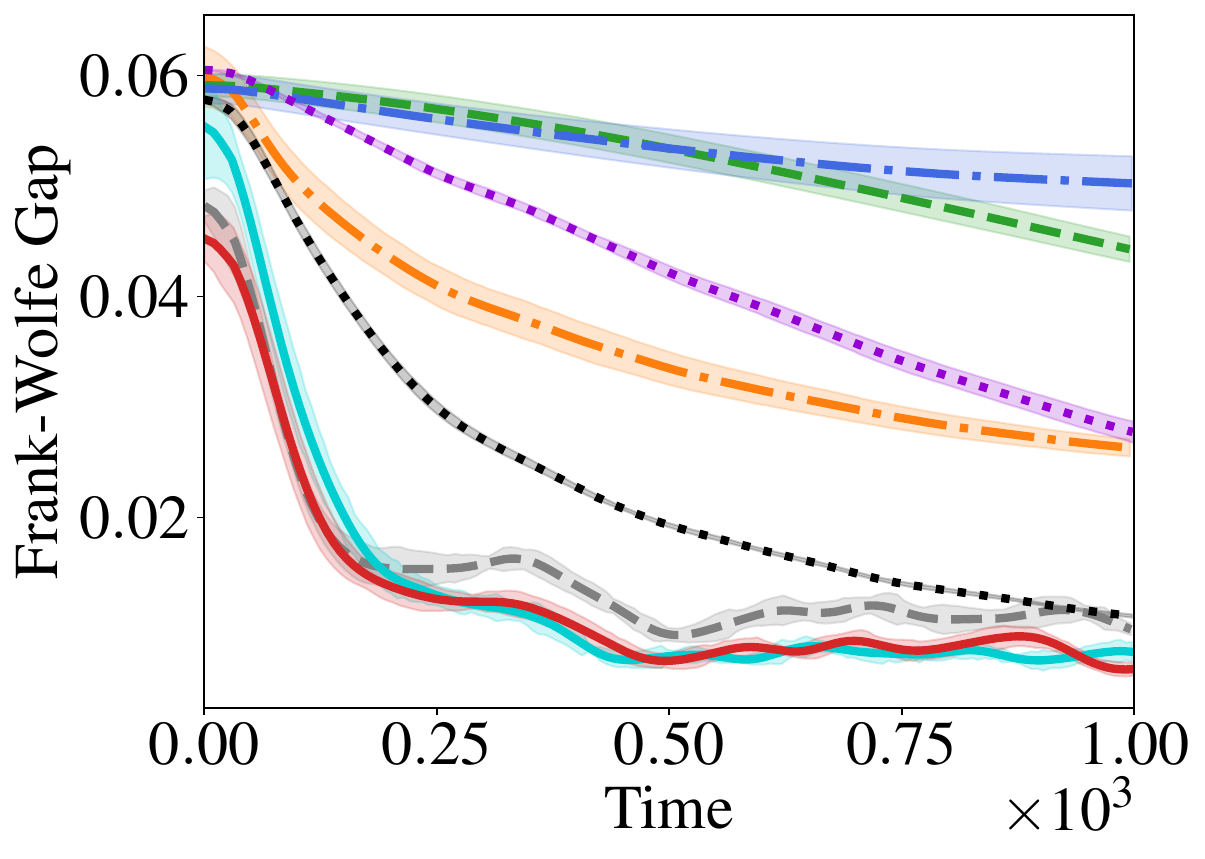}}
   \subfigure{
			\includegraphics[width=0.32\textwidth]{./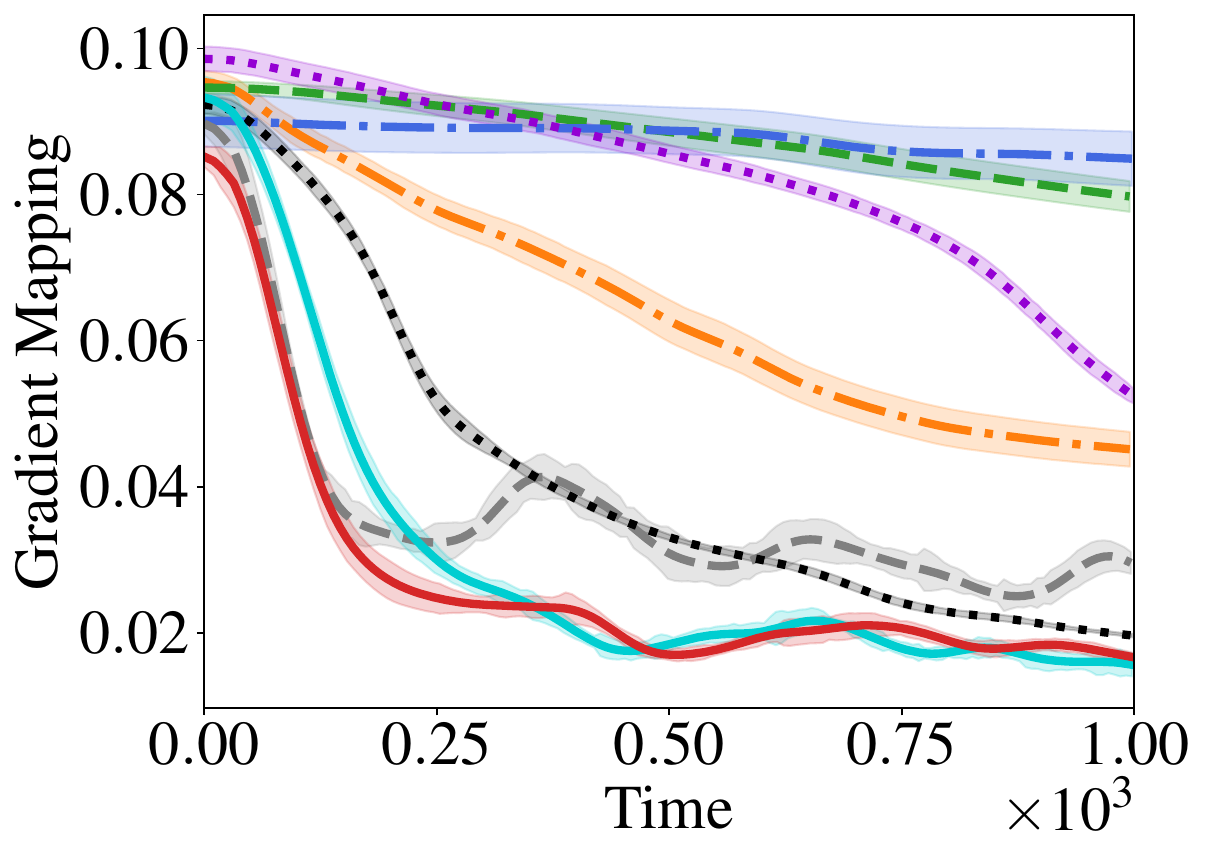}
		}
  \setcounter{subfigure}{0}
		\subfigure{
			\includegraphics[width=0.32\textwidth]{./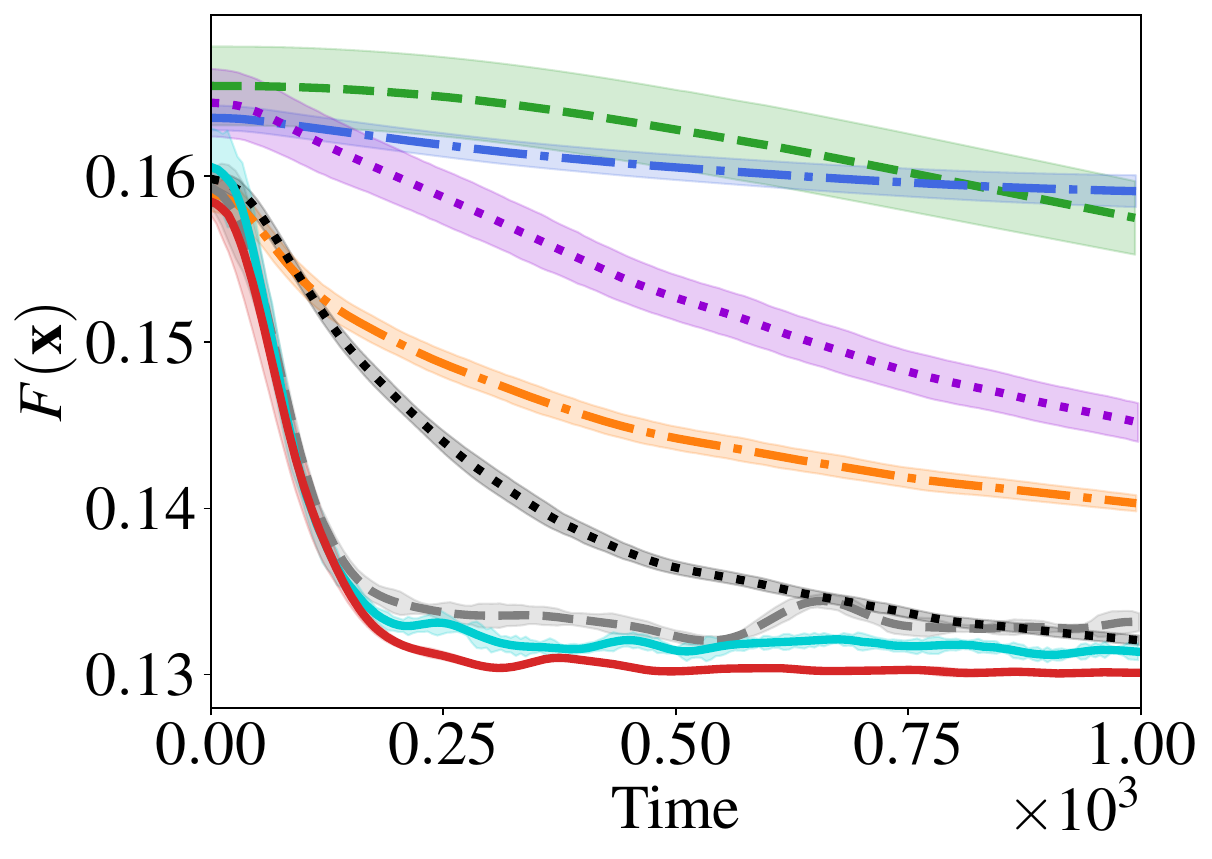}}
   \subfigure[ Industry-12]{
			\includegraphics[width=0.32\textwidth]{./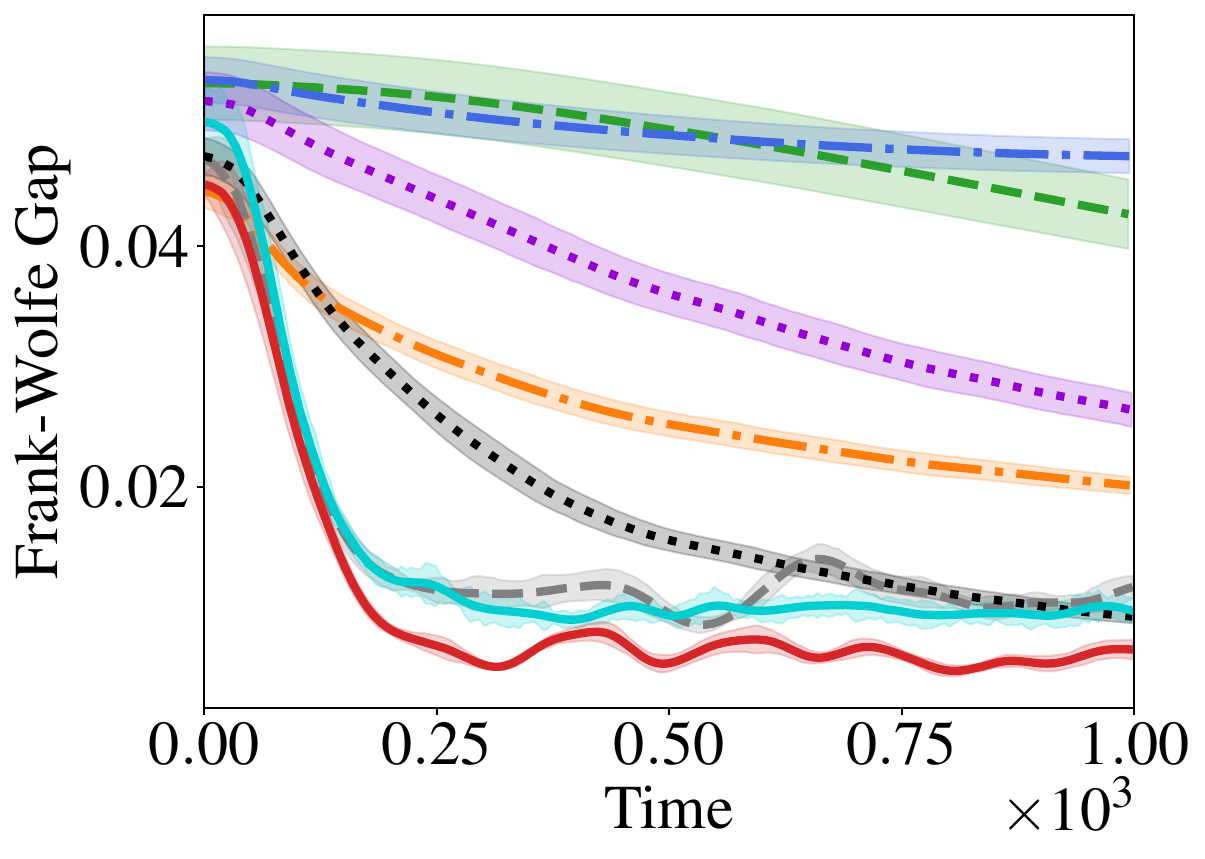}}
   \subfigure{
			\includegraphics[width=0.32\textwidth]{./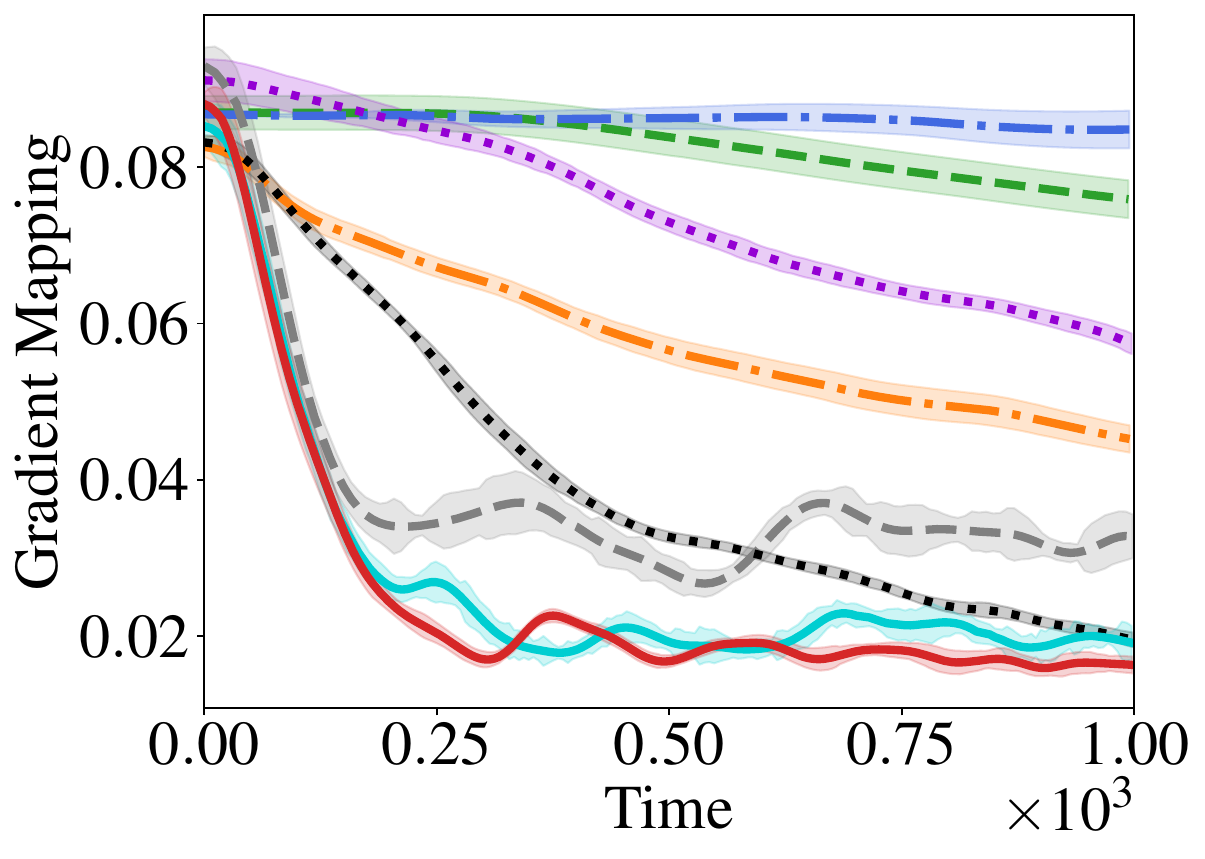}
		}
		\subfigure{
			\includegraphics[width=0.99\textwidth]{./Fig/legend.pdf}
		}
		\caption{Results for risk-averse portfolio optimization.}
		\label{fig:3}
	\end{center}
\vskip -0.2in
\end{figure*}
\subsection{Mean-variance Risk-averse Optimization}
We first consider the problem of risk-averse portfolio optimization~\citep{Shapiro2009LecturesOS}, a commonly used benchmark in comparing various multi-level algorithms~\citep{Yang2019MultilevelSG,balasubramanian2020stochastic,Zhang2021MultiLevelCS,chen2021solving}. Suppose we have $d$ assets to invest over time steps $\{1, \ldots, T\}$, and $\r_{t} \in \mathbb{R}^{d}$ represents the payoff of $d$ assets at time step $t$. The goal is to maximize investment returns and minimize risk simultaneously. A suitable approach for this purpose is the mean-variance risk-averse optimization model, where risk is defined as the variance. This optimization problem is formulated as:
\begin{gather*}
\min _{\x \in \mathcal{X}} F(\x) =  -\frac{1}{T} \sum_{t=1}^{T}\left\langle \r_{t}, \x\right\rangle+ \frac{\lambda}{T} \sum_{t=1}^{T}\left(\left\langle \r_{t}, \x\right\rangle-\left\langle \bar{\r}, \x\right\rangle\right)^{2},
\end{gather*}
where $\bar{\r} = \frac{1}{T}\sum_{t=1}^{T} \r_{t}$, and the decision variable $\x$ denotes the investment quantities in $d$ assets. The domain $\X$ is a simplex, ensuring that $\Norm{\x}_{1}=1$ for any $\x \in \X$. This problem can be modeled as a stochastic two-level constrained compositional optimization problem,  with each layer expressed as follows: 
\begin{gather*}
     f_{1}(\x)=\left(-\frac{1}{T} \sum_{t=1}^{T}\langle \r_{t}, \x\rangle, \x\right), \\
     f_{2}(\y_1, \y_2)=\y_1 + \frac{\lambda}{T} \sum_{t=1}^{T}\left(\left\langle \r_{t}, \y_2\right\rangle+\y_1\right)^{2}, 
\end{gather*}
where $f_1(\cdot)$ is the inner function and $f_2(\cdot)$ is the outer function such that $F(\x)=f_2(f_1(\x))$.

For experimental validation, we utilize real-world datasets Industry-10 and Industry-12 from the Kenneth R. French Data Library\footnote{https://mba.tuck.dartmouth.edu/pages/faculty/ken.french/}. These datasets include payoffs for 10 and 12 industrial assets over 25,105 consecutive periods. For non-projection-free methods, we implement the projection onto the simplex following a well-known efficient projection method~\cite{Duchi2008EfficientPO}. We report the loss value $F(\x)$, as well as the Frank-Wolfe gap and gradient mapping criteria in Figure~\ref{fig:1}, averaging all curves over 50 runs. We also include results concerning the number of iterations in Figure~\ref{fig:4} in the Appendix~\ref{EPAP}. It is observed that LiNASA+ICG, our PMVR and PMVR-v2, tend to converge more rapidly compared to other algorithms in  all tasks. Specifically, PMVR demonstrates similar performance to LiNASA+ICG in the Industry 12 dataset and performs better than LiNASA+ICG in Industry 10. The loss value, Frank-Wolfe gap, and gradient mapping of our PMVR-v2 decrease most quickly in both datasets, validating the effectiveness of the proposed method.

\subsection{Mean-deviation Risk-averse Optimization}
Finally, we further conduct the experiment on a three-level compositional optimization problem. Here, we still consider the problem of risk-averse portfolio optimization as in the previous subsection, and the risk  is quantified as the standard deviation this time. The mathematical formulation of this problem is:

\begin{align*} \max_{\mathbf{x} \in \mathcal{X}} \frac{1}{T} \sum_{t=1}^{T}\left\langle \mathbf r_t, \mathbf x \right\rangle-\lambda \sqrt{\frac{1}{T} \sum_{t=1}^{T}\left(\left\langle \mathbf r_{t}, 
\mathbf x \right\rangle-\left\langle \mathbf{\bar r}, \mathbf x \right\rangle\right)^{2}}, \end{align*}

where $\mathbf{\bar r}= \frac{1}{T}\sum_{t=1}^{T} \mathbf r_{t}$, $\mathcal{X}$ is the probability simplex, and the decision variable $ \mathbf x$ denotes the investment quantity vector in the $d$ assets. This is a three-level compositional optimization problem according to the analysis by \citet{jiang2022optimal}.

In the experiments, we also evaluated different methods on the real-world datasets Industry-10 and Industry-12. The results, including the loss value $F(\mathbf x)$, the Frank-Wolfe gap, and the gradient mapping criteria, are reported in Figure~\ref{fig:3}. These results are averaged over 10 runs. As can be seen, our methods (PMVR and PMVR-v2) demonstrated faster convergence compared to other algorithms across all tasks. The loss value, Frank-Wolfe gap, and gradient mapping of our PMVR-v2 decreased most rapidly in both datasets, indicating the effectiveness of our proposed approach.

\section{Conclusion}
In this paper, we investigate projection-free algorithms for stochastic constrained multi-level optimization. Our proposed methods not only yield better results than previous approaches under the gradient mapping criterion but also provide guarantees for the Frank-Wolfe gap, an aspect previously absent in projection-free multi-level research. Additionally, we provide theoretical guarantees for convex and strongly convex objective functions, and validate the effectiveness of our proposed methods through numerical experiments.

\section*{Acknowledgements}
This work was partially supported by NSFC (62122037, 61921006), the Collaborative Innovation Center of Novel Software Technology and Industrialization, and the Postgraduate Research \& Practice Innovation Program of Jiangsu Province (No. KYCX24\_0231).

\section*{Impact Statement}
This paper presents work whose goal is to advance the field of Machine Learning. There are many potential societal consequences of our work, none of which we feel must be specifically highlighted here.
\bibliography{ref}
\bibliographystyle{icml2024}

\newpage
\appendix
\onecolumn
\section{More Experimental Results}\label{EPAP}
We also report the loss value $F(\x)$, as well as the Frank-Wolfe gap and gradient mapping criteria concerning the number of iterations for the mean-variance risk-averse optimization in Figure~\ref{fig:4}. When the cost of the projection operation is not considered~(since we report the results based on iteration numbers rather than the time used), the SMVR, LiNASA+ICG, our PMVR and PMVR-v2, tend to converge more rapidly compared to other algorithms in all tasks. The loss value, Frank-Wolfe gap, and gradient mapping of our PMVR-v2 decrease most quickly in both datasets, validating the effectiveness of the proposed method.
\begin{figure*}[ht]
	\begin{center}
		\subfigure{
			\includegraphics[width=0.32\textwidth]{./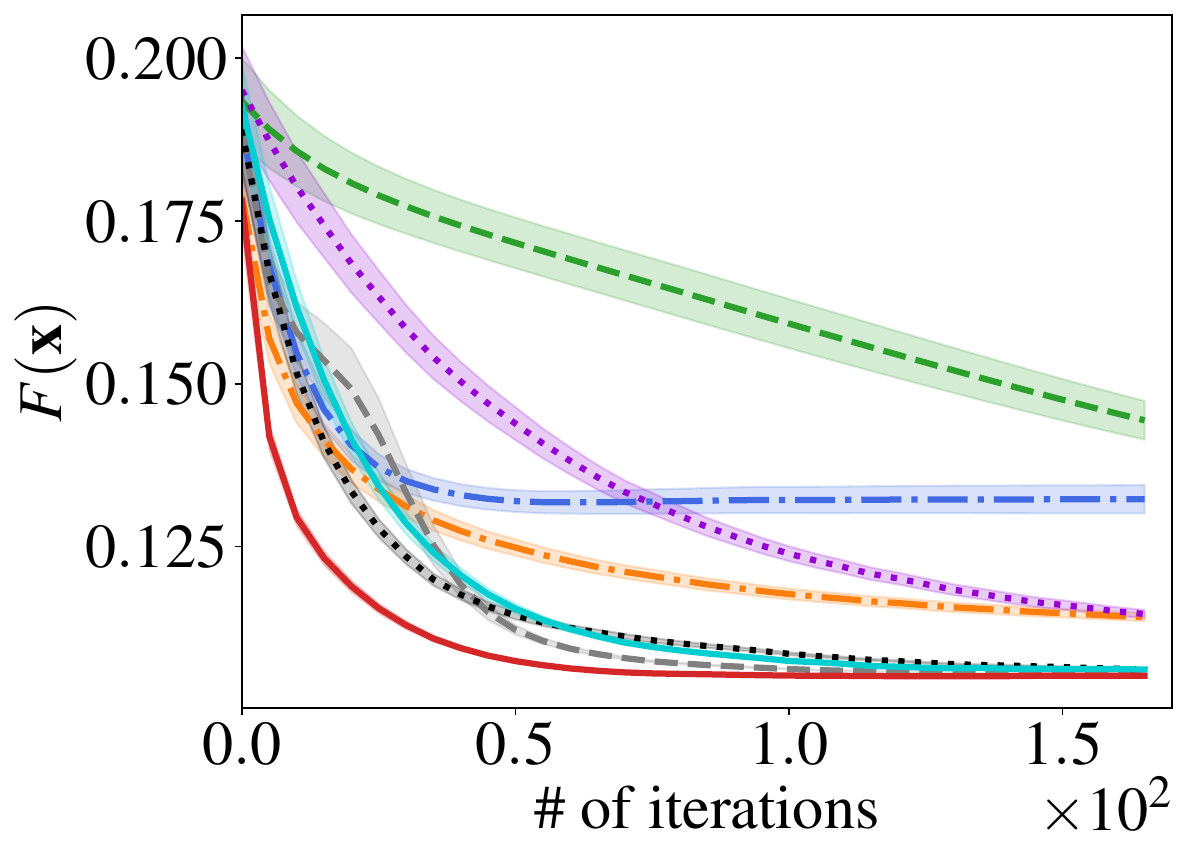}}
   \setcounter{subfigure}{0}
   \subfigure[ Industry-10]{
			\includegraphics[width=0.32\textwidth]{./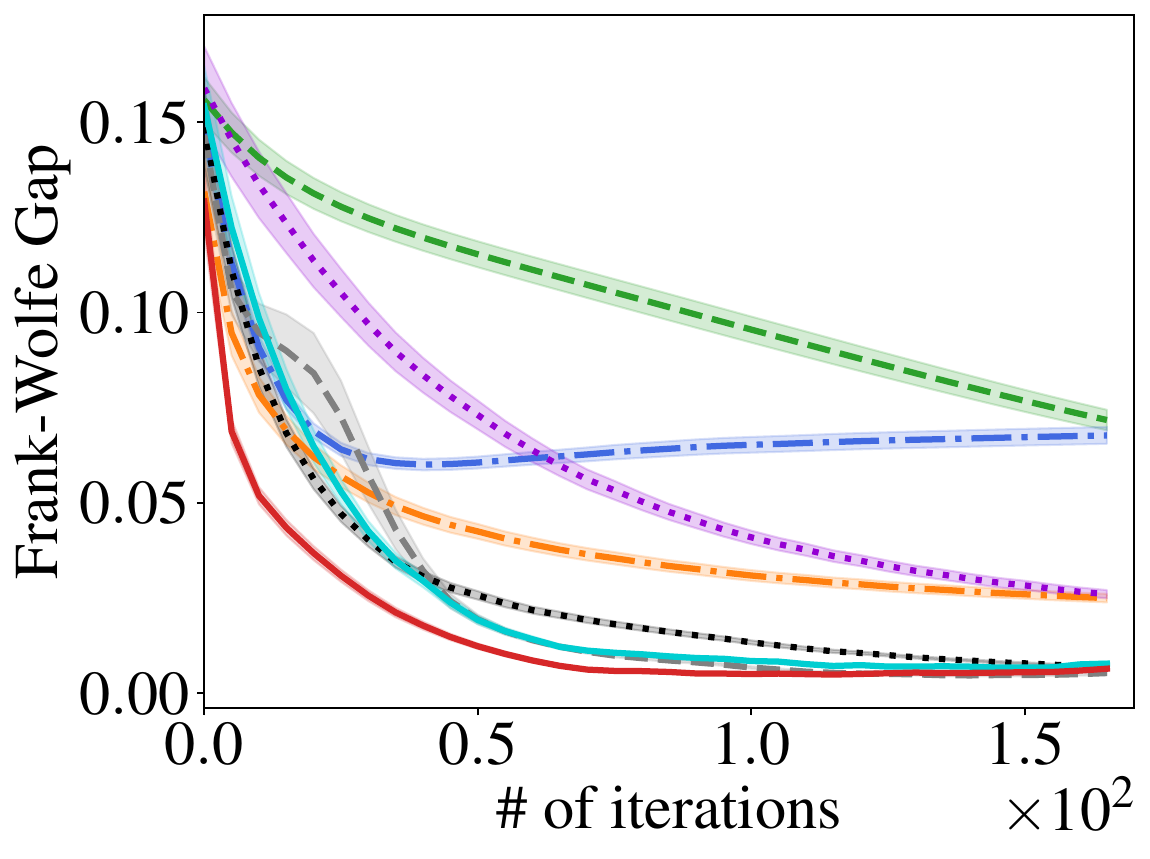}}
   \subfigure{
			\includegraphics[width=0.32\textwidth]{./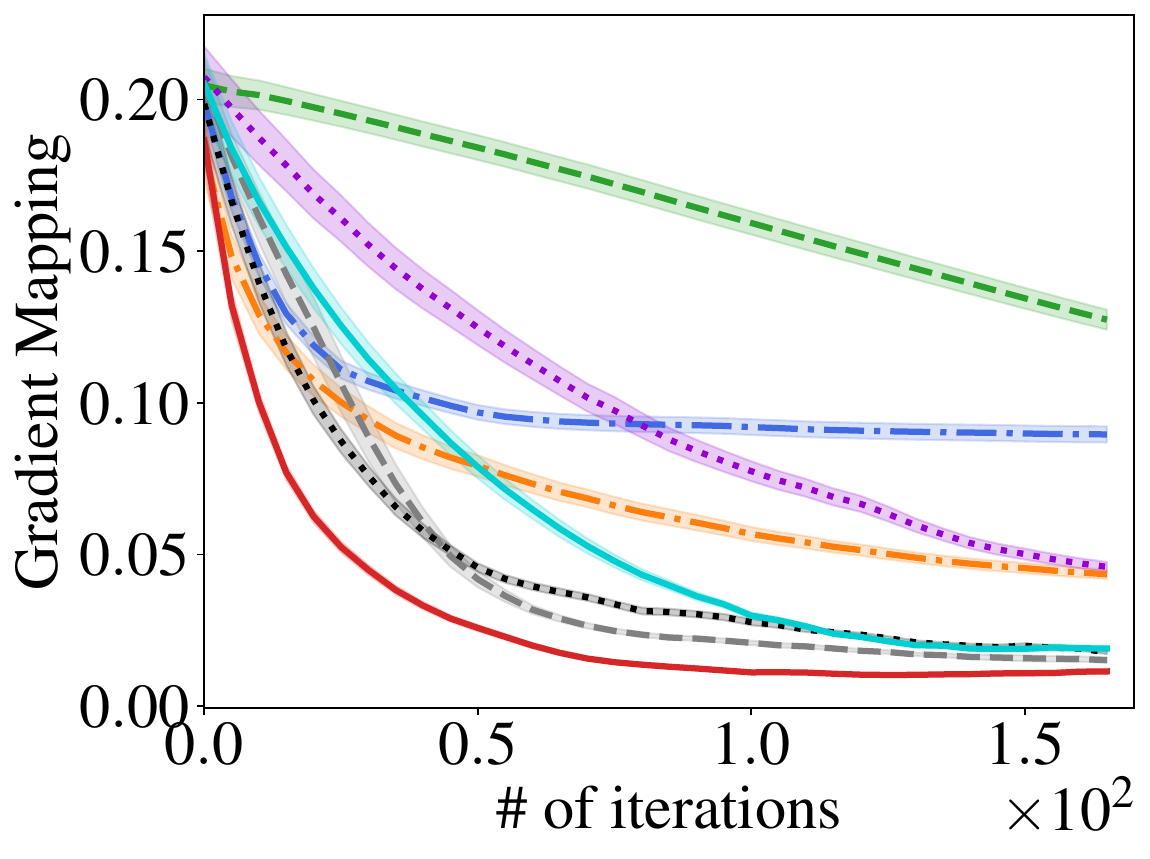}
		}
  \setcounter{subfigure}{0}
		\subfigure{
			\includegraphics[width=0.32\textwidth]{./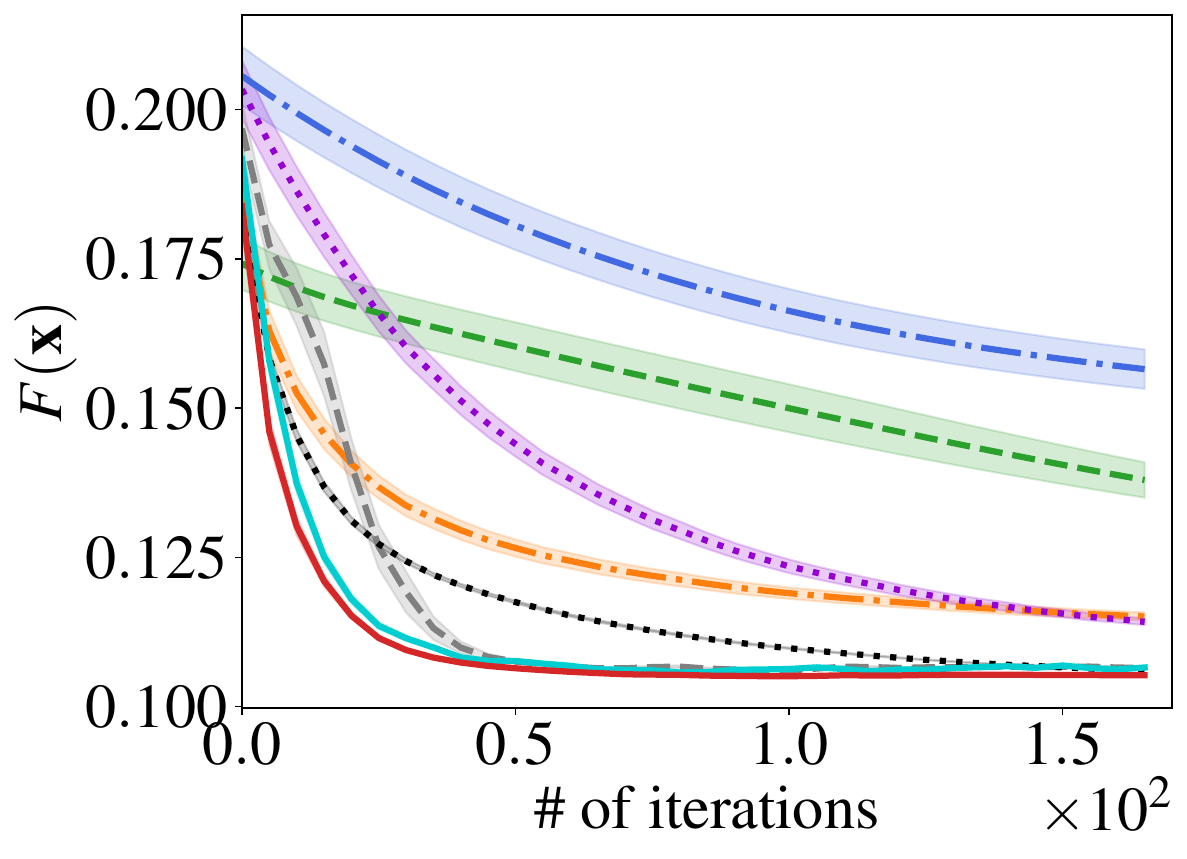}}
   \subfigure[ Industry-12]{
			\includegraphics[width=0.32\textwidth]{./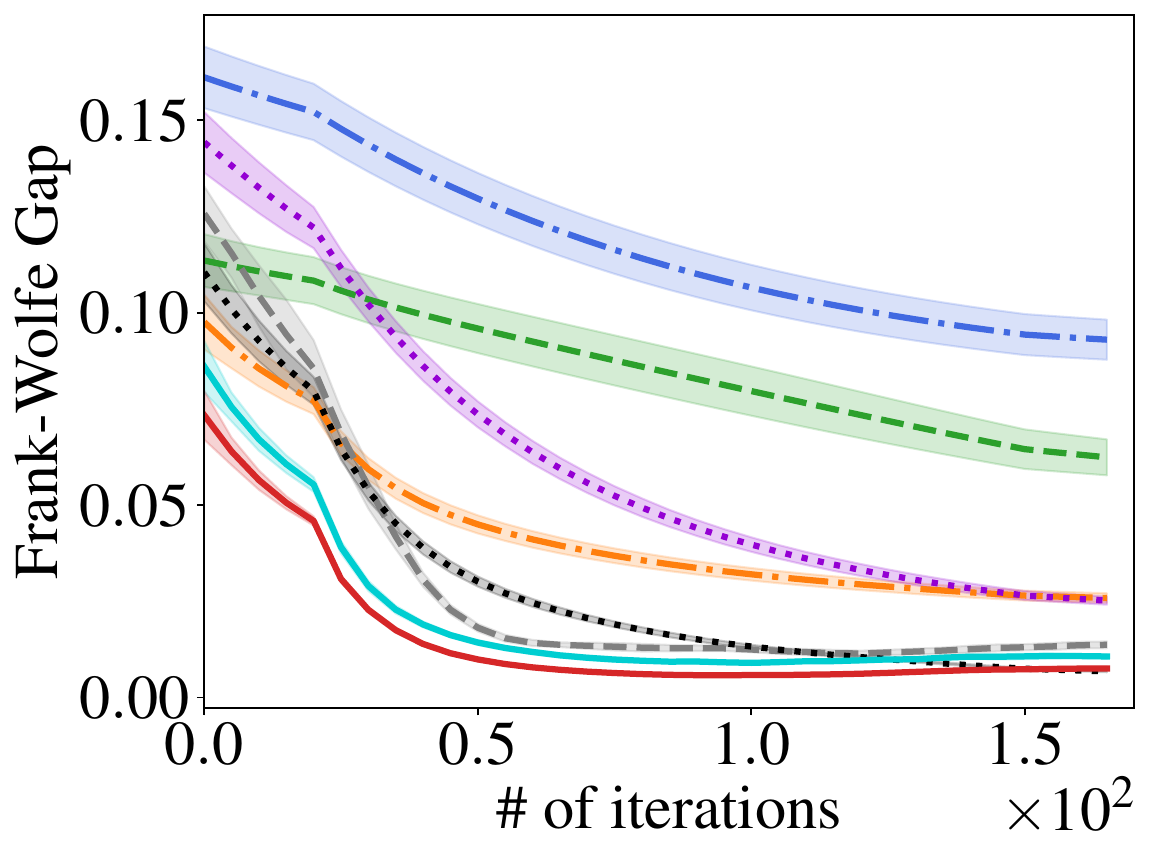}}
   \subfigure{
			\includegraphics[width=0.32\textwidth]{./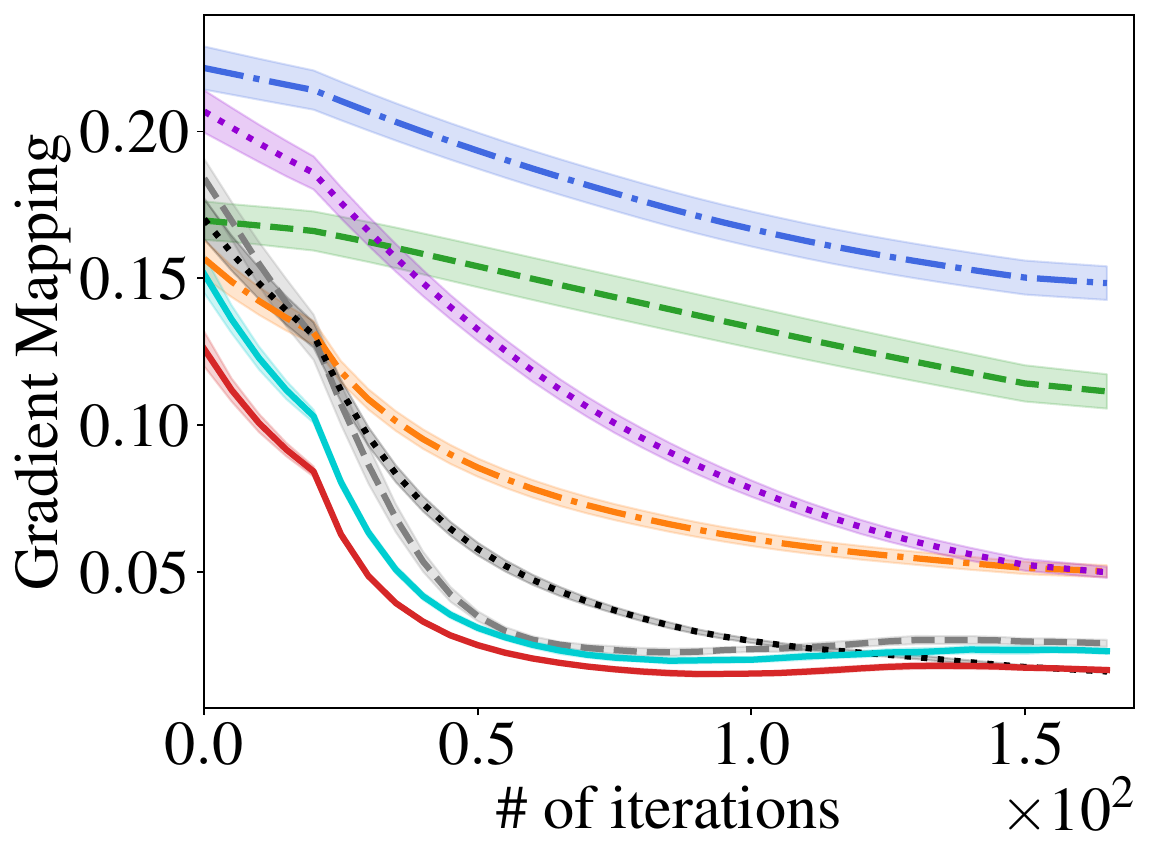}
		}
		\subfigure{
			\includegraphics[width=0.99\textwidth]{./Fig/legend.pdf}
		}
		\caption{Results for risk-averse portfolio optimization.}
		\label{fig:4}
	\end{center}
\end{figure*}

\section{Proof of Frank-Wolfe Gap~(Theorem~\ref{thm:main} and Theorem~\ref{thm:main_0})}
In this section, we present the proofs for the Frank-Wolfe gap. First, to bound the estimation error of the gradient, we have the following lemma.
\begin{lemma}\label{lem:2} The gradient estimation error can be divided into the following two parts.
\begin{align*}
\sum_{t=1}^T \E\left[\Norm{\nabla F(\x_t) - \v_t }^2\right] \leq  2 \sum_{t=1}^T\E\left[\Norm{\v_t - \prod_{i=1}^K \nabla f_i(\u_t^{i-1}) }^2\right] + 2KL_F^2 \sum_{t=1}^T \sum_{i=1}^{K-1} \E\left[\Norm{\u_t^i - f_i(\u_t^{i-1})}^2\right].
\end{align*}
\end{lemma}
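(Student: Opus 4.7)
The plan is to decompose the gradient estimation error into the two natural sources of error that the variance reduction estimators are designed to control, then handle each piece separately.

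\textbf{Step 1: Young-type split.} Let $G_t := \prod_{i=1}^K \nabla f_i(\u_t^{i-1})$ denote the ``target'' that the STORM estimator $\v_t$ is tracking. I add and subtract $G_t$ inside the norm and apply $\|a+b\|^2 \le 2\|a\|^2 + 2\|b\|^2$ to obtain
\begin{align*}
\Norm{\nabla F(\x_t) - \v_t}^2 \le 2\Norm{G_t - \v_t}^2 + 2\Norm{\nabla F(\x_t) - G_t}^2.
\end{align*}
Summing over $t$, the first summand is precisely the first term on the RHS of the lemma, so the remaining task is to bound $\Norm{\nabla F(\x_t) - G_t}^2$ by $K L_F^2 \sum_{i=1}^{K-1}\|\u_t^i - f_i(\u_t^{i-1})\|^2$ (up to constants absorbed into $L_F$).

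\textbf{Step 2: Telescoping the Jacobian product.} Writing $y_t^0 = \x_t$ and $y_t^i := f_i(y_t^{i-1})$ (the true composition), we have $\nabla F(\x_t) = \prod_{i=1}^K \nabla f_i(y_t^{i-1})$. I telescope the product
\begin{align*}
G_t - \nabla F(\x_t) = \sum_{k=1}^K \left(\prod_{i=k+1}^K \nabla f_i(\u_t^{i-1})\right)\bigl(\nabla f_k(\u_t^{k-1}) - \nabla f_k(y_t^{k-1})\bigr)\left(\prod_{i=1}^{k-1} \nabla f_i(y_t^{i-1})\right).
\end{align*}
Using Assumption~\ref{asm:1}, each $\|\nabla f_i\| \le L_f$ and each $\nabla f_k$ is $L_J$-Lipschitz, so the $k$-th summand has norm at most $L_f^{K-1} L_J \|\u_t^{k-1} - y_t^{k-1}\|$. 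Since $\u_t^0 = y_t^0 = \x_t$, the $k=1$ term vanishes. Applying Cauchy--Schwarz to $(\sum_{k=2}^K \cdot)^2$ produces the factor $K$ (or $K-1$) and yields
\begin{align*}
\Norm{\nabla F(\x_t) - G_t}^2 \le K L_f^{2(K-1)} L_J^2 \sum_{j=1}^{K-1} \Norm{\u_t^j - y_t^j}^2.
\end{align*}

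\textbf{Step 3: Converting $\|\u_t^j - y_t^j\|$ into the stated residuals.} The term that actually appears in the lemma is $\|\u_t^j - f_j(\u_t^{j-1})\|$, which differs from $\|\u_t^j - y_t^j\|$. I add and subtract $f_j(\u_t^{j-1})$ and use $L_f$-Lipschitzness of $f_j$:
\begin{align*}
\Norm{\u_t^j - y_t^j} \le \Norm{\u_t^j - f_j(\u_t^{j-1})} + L_f \Norm{\u_t^{j-1} - y_t^{j-1}},
\end{align*}
and unroll this recursion down to the base case $\u_t^0 = y_t^0$, giving $\|\u_t^j - y_t^j\| \le \sum_{\ell=1}^j L_f^{j-\ell} \|\u_t^\ell - f_\ell(\u_t^{\ell-1})\|$. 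A final Cauchy--Schwarz turns the squared sum into a sum of squares, allowing all constants $L_f$, $L_J$, $K$ to be absorbed into a single constant $L_F^2$.

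\textbf{Anticipated obstacle.} The main bookkeeping challenge is Step~3, where the innocent-looking substitution of the ``biased'' target $y_t^j$ by the ``realized'' target $f_j(\u_t^{j-1})$ introduces a recursive Lipschitz-propagation that must be flattened without blowing up the index set: the end result must depend only on $\sum_{i=1}^{K-1}$ (not on a double sum), which is achieved by swapping summation order and grouping all geometric factors into $L_F$. Everything else (Young split and Jacobian telescoping) is routine once the right intermediate quantity $G_t$ is identified.
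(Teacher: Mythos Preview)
Your proposal is correct and follows essentially the same route as the paper's proof: the paper performs the identical Young split with the intermediate quantity $G_t=\prod_{i=1}^K\nabla f_i(\u_t^{i-1})$, then bounds $\|\nabla F(\x_t)-G_t\|$ via the same two ingredients you identify---a telescoping/inductive Jacobian-swap argument yielding $L_f^{K-1}L_J\sum_{i=1}^{K-1}\|\u_t^i-y_t^i\|$, followed by the same Lipschitz recursion $\|\u_t^j-y_t^j\|\le\sum_{\ell=1}^j L_f^{j-\ell}\|\u_t^\ell-f_\ell(\u_t^{\ell-1})\|$ to reach the stated residuals. The only cosmetic difference is that the paper applies Cauchy--Schwarz once at the end (after combining the two stages into coefficients $C_i\le L_F$), whereas you apply it separately in Steps~2 and~3; this affects only how constants are packed into $L_F$.
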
	
\begin{proof}
First, it is easy to show that:
\begin{align*}
\E\left[\Norm{\v_t - \nabla F(\x_t)}^2\right] \leq & 2\E\left[\Norm{\v_t - \prod_{i=1}^K\nabla f_i(\u_t^{i-1})}^2\right]  + 2\E\left[\Norm{\prod_{i=1}^K\nabla f_i(\u_t^{i-1}) - \nabla F(\x_t)}^2\right].
\end{align*}

Then, we define that $\y_t^i = f_i\circ f_{i-1}\circ \dotsc \circ f_1(\w_t)$ and $\nabla \widehat{F}_i(\w_t) = \nabla f_1(\w_t) \cdots \nabla f_i(\u^{i-1}_t)$.  	
	\begin{align*}
		\Norm{\nabla F_1(\w_t) - \nabla \widehat{F}_1(\w_t)} &= 0,\\
		\Norm{\nabla F_2(\w_t) - \nabla \widehat{F}_2(\w_t)} &= \Norm{\nabla f_1(\w_t) \nabla f_2(\y^1_t) - \nabla f_1(\w_t) \nabla f_2(\u^1_t)}\\
		&\leq  L_f L_J\Norm{\y_t^1 - \u^1_t},	\\
  \Norm{\nabla F_3(\w_t) - \nabla \widehat{F}_3(\w_t)} &= \Norm{\nabla f_1(\w_t) \nabla f_2(\y^1_t) \nabla f_3(\y^2_t) -  \nabla f_1(\w_t) \nabla f_2(\u^1_t) \nabla f_3(\u^2_t)}\\
		&\leq L_f^2 L_J\left(\Norm{\y_t^2 - \u_t^2}  +  \Norm{\y_t^1 - \u^1_t}\right),\\
		& \cdots\\
		\Norm{\nabla F_K(\w_t) - \nabla\widehat{F}_K(\w_t)}  &\leq L_f^{K-1}L_J \sum_{i=1}^{K-1} \Norm{\y_t^i - \u_t^i},
	\end{align*}
	Besides, we also have:
	\begin{align*}
		\Norm{\y_t^2 - \u_t^2} &=\Norm{f_2\circ f_1(\w_t) - \u^2_t} \\
  &\leq \Norm{f_2\circ f_1(\w_t) - f_2(\u^1_t)} + \Norm{f_2(\u^1_t)- \u^2_t}\\
		& \leq L_f \Norm{f_1(\w_t) - \u^1_t} + \Norm{f_2(\u^1_t)- \u^2_t},\\
		\Norm{\y_t^3-\u_t^3} &= \Norm{f_3\circ f_2\circ f_1(\w_t) - \u^3_t} \\
  &\leq \Norm{f_3\circ f_2 \circ f_1(\w_t) - f_3(\u^2_t)} + \Norm{f_3(\u^2_t) - \u^3_t}\\
		& \leq L_f \Norm{\y_t^2 - \u^2_t}  + \Norm{f_3(\u^2_t) - \u^3_t}\\
		& \leq L_f(L_f \Norm{f_1(\w_t) - \u^1_t} + \Norm{f_2(\u_t^1) - \u_t^2}) +  \Norm{f_3(\u^2_t) - \u^3_t}\\
		& \cdots\\
		\Norm{\y_t^i-\u_t^i} & \leq L_f \Norm{\y_t^{i-1} - \u_t^{i-1}} + \Norm{f_i(\u_t^{i-1}) - \u_t^i}\\
		& \leq \sum_{j=1}^i L_f^{i-j} \Norm{f_j(\u_t^{j-1}) - \u_t^j}
	\end{align*}
	To this end, we can conclude that:
	\begin{align*}
		\Norm{\nabla F(\w_t) -\nabla \widehat{F}_K(\w_t)} & \leq \sum_{i=1}^{K-1} C_i \Norm{f_i(\u^{i-1}_t) - \u_t^i}, 
	\end{align*}
	where $C_i \coloneqq L_f^{K-1} L_J(1+L_f+\dotsc+L_f^{K-i-1})$. Since $C_i \leq L_F$, we know that:
\begin{align*}
\E\left[\Norm{\prod_{i=1}^K\nabla f_i(\u_t^{i-1}) - \nabla F(\x_t)}^2\right] \leq K\sum_{i=1}^{K-1} A_i^2\E\left[\Norm{f_i(\u_t^{i-1})-\u_t^i}^2\right].
\end{align*}
\end{proof}

Next, we bound the term $\E\left[\Norm{\v_t - \prod_{i=1}^K \nabla f_i(\u_t^{i-1}) }^2\right]$ and $\E\left[\Norm{\u_t^i - f_i(\u_t^{i-1})}^2\right]$ separately.


\begin{lemma}\label{lem:3} The gradient estimator $\v_t$ enjoys the following guarantee:
\begin{align*}
    \sum_{t=1}^T \E\left[\Norm{\v_t -\prod_{i=1}^K \nabla f_i(\u_t^{i-1})}^2\right] &\leq \frac{\E\left[\Norm{\v_1 -\prod_{i=1}^K \nabla f_i(\u_1^{i-1})}^2\right]}{\alpha }+ \frac{2 K^2 \alpha T\sigma_J^2}{B_1}  L_f^{2K-2}\\
    & \qquad +\frac{2 K}{\alpha B_1} \LL_J^2 L_f^{2K-2} \sum_{t=1}^T \sum_{i=1}^{K} \E\left[\Norm{\u_t^{i-1} - \u_{t-1}^{i-1}}^2\right]
\end{align*}
\end{lemma}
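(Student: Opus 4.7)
The estimator $\v_t$ is a STORM-style momentum update, so the plan is to derive a one-step contraction $a_t \le (1-\alpha)a_{t-1} + \E\Norm{A_t}^2$ for the error $a_t \coloneqq \E\Norm{\v_t - G_t}^2$, where $G_t \coloneqq \prod_{i=1}^K \nabla f_i(\u_t^{i-1})$ is the deterministic gradient product at the current inner-value estimates, and then telescope the recursion to convert the $(1-\alpha)$ contraction factor into the $1/\alpha$ prefactor that appears in the bound.

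\textbf{Deriving the recursion.} Let $\hat G_t \coloneqq \frac{1}{B_1}\sum_j \prod_i \nabla f_i(\u_t^{i-1};\xi_t^{i,j})$ and $\hat G_{t-1}' \coloneqq \frac{1}{B_1}\sum_j \prod_i \nabla f_i(\u_{t-1}^{i-1};\xi_t^{i,j})$; both use the \emph{same} fresh samples drawn at time $t$. Rearranging update~(\ref{ss2}) yields $\v_t - G_t = (1-\alpha)(\v_{t-1} - G_{t-1}) + A_t$, where $A_t \coloneqq (\hat G_t - G_t) - (1-\alpha)(\hat G_{t-1}' - G_{t-1})$. By Assumption~\ref{asm:stochastic2} the samples $\xi_t^{i,j}$ are fresh, independent across levels, and independent of the history through time $t-1$, so $\E[A_t \mid \text{past}]=0$. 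Taking expected squared norms makes the cross term vanish, and using $(1-\alpha)^2\le 1-\alpha$ produces the desired recursion.

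\textbf{Bounding $\E\Norm{A_t}^2$.} The key algebraic rewrite is $A_t = \alpha(\hat G_t - G_t) + (1-\alpha)\bigl[(\hat G_t - \hat G_{t-1}') - (G_t - G_{t-1})\bigr]$, so that the two pieces carry factors $\alpha$ and $(1-\alpha)$ respectively. Applying $\Norm{a+b}^2\le 2\Norm{a}^2+2\Norm{b}^2$: the first piece is the $B_1$-averaged variance of one gradient product, which via the telescoping identity $\prod_i X_i - \prod_i Y_i = \sum_k Y_1\cdots Y_{k-1}(X_k-Y_k)X_{k+1}\cdots X_K$ and cross-level independence (to isolate one noise factor at a time) is controlled by $K^2\sigma_J^2 L_f^{2K-2}/B_1$. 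The second piece is the engine of variance reduction: since $\hat G_t$ and $\hat G_{t-1}'$ share the same samples, applying the same telescoping identity, now with $X_k = \nabla f_k(\u_t^{k-1};\xi)$ and $Y_k = \nabla f_k(\u_{t-1}^{k-1};\xi)$, lets us invoke Assumption~\ref{asm:stoc_smooth3} to replace each $\E\Norm{X_k-Y_k}^2$ by $\LL_J^2\Norm{\u_t^{k-1}-\u_{t-1}^{k-1}}^2$, yielding $\frac{K L_f^{2K-2}\LL_J^2}{B_1}\sum_i\Norm{\u_t^{i-1}-\u_{t-1}^{i-1}}^2$.

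\textbf{Summation and the main obstacle.} Summing the one-step recursion from $t=2$ to $T$ and rearranging gives $\sum_{t=1}^T a_t \le a_1/\alpha + \frac{1}{\alpha}\sum_{t=2}^T \E\Norm{A_t}^2$; substituting the two pieces above produces exactly the claimed three terms, with the $\alpha^2$ in front of the variance piece cancelling against $1/\alpha$ to produce the $\alpha T$ dependence and the $1/\alpha$ appearing explicitly in front of the smoothness piece. The main technical obstacle is the ``variance of a product'' estimate, because the stochastic Jacobians $\nabla f_i(\cdot;\xi)$ are only controlled in mean and not almost surely; cross-level independence is what resolves this, letting us take conditional expectations one level at a time inside the telescoping identity so that the unpaired $X$-factors contribute only their deterministic means (of operator norm at most $L_f$) rather than demanding an a.s.\ bound on the stochastic Jacobians.
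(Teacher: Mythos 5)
Your proof is correct and follows essentially the same route as the paper's: the STORM recursion with a conditionally zero-mean correction term $A_t$, the split of $A_t$ into an $\alpha$-weighted minibatch-variance piece and a sample-sharing difference piece, the level-by-level telescoping of the Jacobian product combined with cross-level independence and average smoothness, and the $1/\alpha$ telescoping of the recursion. The only (shared) imprecision is the claim that the unpaired stochastic Jacobian factors contribute only their deterministic means to a second moment --- in fact $\E\Norm{VW}^2$ involves $\E[WW^\top]$ rather than $\E[W]$, so those factors cost $2L_f^2+2\sigma_J^2$ instead of $L_f^2$ --- but this only perturbs constants and the paper's own proof makes the identical leap.
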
	
\begin{proof}
According to the definition of $\v_t$, we know that:
\begin{align*}
\v_t = \frac{1}{B_1}\sum_{j=1}^{B_1} \prod_{i=1}^K \nabla f_i(\u_t^{i-1};\xi_t^{i,j})  + (1-\alpha_t)\left(\v_{t-1} - \frac{1}{B_1} \sum_{j=1}^{B_1} \prod_{i=1}^K \nabla f_i(\u_{t-1}^{i-1};\xi_t^{i,j})\right).    
\end{align*} 
Then we can obtain the following guarantee:
\begin{align*}
&\Norm{\v_t -\prod_{i=1}^K \nabla f_i(\u_t^{i-1})}^2\\
=&\left \| \frac{1}{B_1}\sum_{j=1}^{B_1}\prod_{i=1}^K \nabla f_i(\u_t^{i-1};\xi_t^{i,j})  + (1-\alpha_t)\left(\v_{t-1} - \frac{1}{B_1} \sum_{j=1}^{B_1} \prod_{i=1}^K \nabla f_i(\u_{t-1}^{i-1};\xi_t^{i,j})\right) - \prod_{i=1}^K \nabla f_i(\u_t^{i-1})\right \|^2 \\
=& \left \| (1-\alpha_t) \left( \v_{t-1} - \prod_{i=1}^K \nabla f_i(\u_{t-1}^{i-1}) \right) + \alpha_t \left( \frac{1}{B_1}\sum_{j=1}^{B_1}  \prod_{i=1}^K \nabla f_i(\u_{t-1}^{i-1};\xi_t^{i,j}) - \prod_{i=1}^K \nabla f_i(\u_{t-1}^{i-1})\right) \right.\\
& \left. + \left( \prod_{i=1}^K \nabla f_i(\u_{t-1}^{i-1}) -\prod_{i=1}^K \nabla f_i(\u_{t}^{i-1})   -  \frac{1}{B_1}\sum_{j=1}^{B_1}\prod_{i=1}^K \nabla f_i(\u_{t-1}^{i-1};\xi_t^{i,j}) + \frac{1}{B_1}\sum_{j=1}^{B_1}\prod_{i=1}^K \nabla f_i(\u_{t}^{i-1};\xi_t^{i,j})\right) \right\|^2 
\end{align*}

Since  the expectation of last two terms equals zero, such that 
\begin{gather*}
    E \left[  \frac{1}{B_1}\sum_{j=1}^{B_1} \prod_{i=1}^K \nabla f_i(\u_{t-1}^{i-1};\xi_t^{i,j}) - \prod_{i=1}^K \nabla f_i(\u_{t-1}^{i-1})\right] = 0, \\
\E\left[\prod_{i=1}^K \nabla f_i(\u_{t-1}^{i-1}) -\prod_{i=1}^K \nabla f_i(\u_{t}^{i-1}) -  \frac{1}{B_1}\sum_{j=1}^{B_1} \prod_{i=1}^K \nabla f_i(\u_{t-1}^{i-1};\xi_t^{i,j}) +  \frac{1}{B_1}\sum_{j=1}^{B_1} \prod_{i=1}^K \nabla f_i(\u_{t}^{i-1};\xi_t^{i,j}) \right] =0,
\end{gather*}
we would have that:
\begin{align*}
&\E\left[\Norm{\v_t -\prod_{i=1}^K \nabla f_i(\u_t^{i-1})}^2\right]\\
\leq & (1-\alpha)^2 \E\left[\Norm{ \v_{t-1} - \prod_{i=1}^K \nabla f_i(\u_{t-1}^{i-1})}^2\right] +  \E \left[ \left\| \alpha \left(\frac{1}{B_1}\sum_{j=1}^{B_1}  \prod_{i=1}^K \nabla f_i(\u_{t-1}^{i-1};\xi_t^{i,j}) - \prod_{i=1}^K \nabla f_i(\u_{t-1}^{i-1}) \right) \right.\right. \\
& \left.\left. +\left(\frac{1}{B_1}\sum_{j=1}^{B_1}  \prod_{i=1}^K \nabla f_i(\u_{t-1}^{i-1};\xi_t^{i,j}) -  \frac{1}{B_1}\sum_{j=1}^{B_1}  \prod_{i=1}^K \nabla f_i(\u_{t}^{i-1};\xi_t^{i,j}) -  \prod_{i=1}^K \nabla f_i(\u_{t-1}^{i-1}) + \prod_{i=1}^K \nabla f_i(\u_{t}^{i-1})\right) \right\|^2 \right] \\
\leq & (1-\alpha_t) \E\left[\Norm{ \v_{t-1} - \prod_{i=1}^K \nabla f_i(\u_{t-1}^{i-1})}^2\right] + 2\alpha_t^2\E\left[\Norm{\frac{1}{B_1}\sum_{j=1}^{B_1}  \prod_{i=1}^K \nabla f_i(\u_{t-1}^{i-1};\xi_t^{i,j}) - \prod_{i=1}^K \nabla f_i(\u_{t-1}^{i-1})}^2\right]\\
& +2\E\left[\Norm{\frac{1}{B_1}\sum_{j=1}^{B_1}  \prod_{i=1}^K \nabla f_i(\u_{t-1}^{i-1};\xi_t^{i,j}) -  \frac{1}{B_1}\sum_{j=1}^{B_1}  \prod_{i=1}^K \nabla f_i(\u_{t}^{i-1};\xi_t^{i,j}) -  \prod_{i=1}^K \nabla f_i(\u_{t-1}^{i-1}) + \prod_{i=1}^K \nabla f_i(\u_{t}^{i-1})}^2\right]
\end{align*}
Then, we would bound the last two terms, respectively. First, we have that:
\begin{align*}
    &\E\left[\Norm{ \frac{1}{B_1}\sum_{j=1}^{B_1}  \prod_{i=1}^K \nabla f_i(\u_{t-1}^{i-1};\xi_t^{i,j}) - \prod_{i=1}^K \nabla f_i(\u_{t-1}^{i-1})}^2\right]\\
    = & \frac{1}{B_1^2}\sum_{j=1}^{B_1}\E\left[\Norm{\prod_{i=1}^K \nabla f_i(\u_{t-1}^{i-1};\xi_t^{i,j}) - \prod_{i=1}^K \nabla f_i(\u_{t-1}^{i-1})}^2\right]\\
    = & \frac{1}{B_1}\E\left[\Norm{\prod_{i=1}^K \nabla f_i(\u_{t-1}^{i-1};\xi_t^{i,j}) - \prod_{i=1}^K \nabla f_i(\u_{t-1}^{i-1})}^2\right]\\
    =&\frac{1}{B_1} \E\left[\left\|\prod_{i=1}^K \nabla f_i(\u_{t-1}^{i-1};\xi_t^{i,j}) - \nabla f_1(\u_{t-1}^0) \prod_{i=2}^K \nabla  f_i(\u_{t-1}^{i-1};\xi_t^{i,j}) \right. \right. \\
    & \qquad\quad \left.\left. + \nabla f_1(\u_{t-1}^0) \prod_{i=2}^K \nabla  f_i(\u_{t-1}^{i-1};\xi_t^{i,j}) - \nabla f_1(\u_{t-1}^0) \nabla f_2(\u_{t-1}^1) \prod_{i=3}^K \nabla  f_i(\u_{t-1}^{i-1};\xi_t^{i,j}) \right. \right.\\ 
    &  \qquad\qquad \cdots \\
    & \qquad\quad \left.\left. +  \left(\prod_{i=1}^{K-1} \nabla f_i(\u_{t-1}^{i-1})\right) \nabla f_K(\u_{t-1}^{K-1};\xi_t^{i,j}) - \prod_{i=1}^K \nabla f_i(\u_{t-1}^{i-1}) \right\|^2\right] \\
    \leq & \frac{K}{B_1} \E\left[\Norm{\prod_{i=1}^K \nabla f_i(\u_{t-1}^{i-1};\xi_t^{i,j}) - \nabla f_1(\u_{t-1}^0) \prod_{i=2}^K \nabla  f_i(\u_{t-1}^{i-1};\xi_t^{i,j})}^2 \right] \\
    & + \frac{K}{B_1}\E\left[\Norm{\nabla f_1(\u_{t-1}^0) \prod_{i=2}^K \nabla  f_i(\u_{t-1}^{i-1};\xi_t^{i,j}) -\nabla f_1(\u_{t-1}^0) \nabla f_2(\u_{t-1}^1) \prod_{i=3}^K \nabla  f_i(\u_{t-1}^{i-1};\xi_t^{i,j})}^2 \right] \\
    & + ... \\
    & + \frac{K}{B_1}\E\left[\Norm{\left(\prod_{i=1}^{K-1} \nabla f_i(\u_{t-1}^{i-1})\right) \nabla f_K(\u_{t-1}^{K-1};\xi_t^{i,j})  - \prod_{i=1}^K \nabla f_i(\u_{t-1}^{i-1})}^2 \right]  \\
   \leq &  \frac{K}{B_1}\sigma_J^2(L_f^{2K-2} +  L_f^{2K-2}+...+L_f^{2K-2}) \\
   = & \frac{K^2 }{B_1} \sigma_J^2 L_f^{2K-2}
\end{align*}

When dealing with the second term, note that 
\begin{align*}
    \E\left[\left\|\frac{1}{B_1}\sum_{j=1}^{B_1} \left(  \prod_{i=1}^K \nabla f_i(\u_{t-1}^{i-1};\xi_t^{i,j}) -  \prod_{i=1}^K \nabla f_i(\u_{t}^{i-1};\xi_t^{i,j})   -  \prod_{i=1}^K \nabla f_i(\u_{t-1}^{i-1}) + \prod_{i=1}^K \nabla f_i(\u_{t}^{i-1})\right)\right\|\right]=0
\end{align*}
 So, we can have that:

\begin{align*}
    &\E\left[\Norm{\frac{1}{B_1}\sum_{j=1}^{B_1}  \prod_{i=1}^K \nabla f_i(\u_{t-1}^{i-1};\xi_t^{i,j}) -  \frac{1}{B_1}\sum_{j=1}^{B_1}  \prod_{i=1}^K \nabla f_i(\u_{t}^{i-1};\xi_t^{i,j}) -  \prod_{i=1}^K \nabla f_i(\u_{t-1}^{i-1}) + \prod_{i=1}^K \nabla f_i(\u_{t}^{i-1})}^2\right]\\
    = & \frac{1}{B_1^2} \E\left[\Norm{\sum_{j=1}^{B_1}\left(  \prod_{i=1}^K \nabla f_i(\u_{t-1}^{i-1};\xi_t^{i,j}) -  \prod_{i=1}^K \nabla f_i(\u_{t}^{i-1};\xi_t^{i,j}) -  \prod_{i=1}^K \nabla f_i(\u_{t-1}^{i-1}) + \prod_{i=1}^K \nabla f_i(\u_{t}^{i-1}) \right)}^2\right]\\
    = & \frac{1}{B_1^2}\sum_{j=1}^{B_1}\E\left[\Norm{  \prod_{i=1}^K \nabla f_i(\u_{t-1}^{i-1};\xi_t^{i,j}) -  \prod_{i=1}^K \nabla f_i(\u_{t}^{i-1};\xi_t^{i,j}) -  \prod_{i=1}^K \nabla f_i(\u_{t-1}^{i-1}) + \prod_{i=1}^K \nabla f_i(\u_{t}^{i-1}) }^2\right]\\
    = & \frac{1}{B_1}\E\left[\Norm{  \prod_{i=1}^K \nabla f_i(\u_{t-1}^{i-1};\xi_t^{i,j}) -  \prod_{i=1}^K \nabla f_i(\u_{t}^{i-1};\xi_t^{i,j}) -  \prod_{i=1}^K \nabla f_i(\u_{t-1}^{i-1}) + \prod_{i=1}^K \nabla f_i(\u_{t}^{i-1}) }^2\right]\\
    \leq & \frac{1}{B_1}\E\left[\Norm{  \prod_{i=1}^K \nabla f_i(\u_{t-1}^{i-1};\xi_t^{i,j}) -  \prod_{i=1}^K \nabla f_i(\u_{t}^{i-1};\xi_t^{i,j}) }^2\right]\\
    =&\frac{1}{B_1} \E\left[ \left\|\prod_{i=1}^K \nabla f_i(\u_{t-1}^{i-1};\xi_t^{i,j}) - \nabla f_1(\u_{t}^{0};\xi_t^{1,j}) \prod_{i=2}^K \nabla f_i(\u_{t-1}^{i-1};\xi_t^{i,j})   \right. \right. \\
    & \left. \left.\quad\quad + \nabla f_1(\u_{t}^{0};\xi_t^{1,j}) \prod_{i=2}^K \nabla f_i(\u_{t-1}^{i-1};\xi_t^{i,j})  - \nabla f_1(\u_{t}^{0};\xi_t^{1,j}) \nabla f_2(\u_{t}^1;\xi_t^{2,j}) \prod_{i=3}^K \nabla f_i(\u_{t-1}^{i-1};\xi_t^{i,j})\right. \right.  \\
    &\left. \left.\quad\quad \cdots \right. \right. \\
    & \left. \left.\quad\quad+ \left(\prod_{i=1}^{K-1} \nabla f_i(\u_{t}^{i-1};\xi_t^{i,j})\right)\nabla f_K(\u_{t-1}^{K-1};\xi_t^{K,j}) - \prod_{i=1}^K \nabla f_i(\u_{t}^{i-1};\xi_t^{i,j})  \right\|^2 \right]\\
    \leq & \frac{K}{B_1} \E\left[\Norm{\prod_{i=1}^K \nabla f_i(\u_{t-1}^{i-1};\xi_t^{i,j}) - \nabla f_1(\u_{t}^{0};\xi_t^{1,j}) \prod_{i=2}^K \nabla f_i(\u_{t-1}^{i-1};\xi_t^{i,j})}^2 \right] \\
    &\quad + \frac{K}{B_1}\E\left[\Norm{\nabla f_1(\u_{t}^{0};\xi_t^{1,j}) \prod_{i=2}^K \nabla f_i(\u_{t-1}^{i-1};\xi_t^{i,j}) - \nabla f_1(\u_{t}^{0};\xi_t^{1,j}) \nabla f_2(\u_{t}^1;\xi_t^{2,j}) \prod_{i=3}^K \nabla f_i(\u_{t-1}^{i-1};\xi_t^{i,j})}^2 \right] \\
    & \quad \cdots  \\
    & \quad +  \frac{K}{B_1}\E\left[\Norm{\left(\prod_{i=1}^{K-1} \nabla f_i(\u_{t}^{i-1};\xi_t^{i,j})\right)\nabla f_K(\u_{t-1}^{K-1};\xi_t^{i,j})- \prod_{i=1}^K \nabla f_i(\u_{t}^{i-1};\xi_t^{i,j})}^2 \right]\\
    \leq & \frac{K}{B_1} \LL_J^2L_f^{2K-2} \sum_{i=1}^{K} \E\left[\Norm{\u_t^{i-1} - \u_{t-1}^{i-1}}^2\right]
\end{align*}

To this end, we can conclude that:
\begin{align*}
&\E\left[\Norm{\v_t -\prod_{i=1}^K \nabla f_i(\u_t^{i-1})}^2\right] \\
\leq & (1-\alpha_t) \E\left[\Norm{ \v_{t-1} - \prod_{i=1}^K \nabla f_i(\u_{t-1}^{i-1})}^2\right]+ \frac{2K^2\alpha_t^2\sigma_J^2}{B_1} L_f^{2K-2} +\frac{2K}{B_1}  \LL_J^2L_f^{2K-2} \sum_{i=1}^{K} \E\left[\Norm{\u_t^{i-1} - \u_{t-1}^{i-1}}^2\right],
\end{align*}
which implies that 
\begin{align*}
    &\sum_{t=1}^T \E\left[\Norm{\v_t -\prod_{i=1}^K \nabla f_i(\u_t^{i-1})}^2\right] \leq \sum_{t=2}^T\left(\frac{1}{\alpha_{t+1}}-\frac{1}{\alpha_t} \right)\E\left[\Norm{\v_1 -\prod_{i=1}^K \nabla f_i(\u_1^{i-1})}^2\right]\\
    & +\frac{1}{\alpha_2}\E\left[\Norm{\v_1 -\prod_{i=1}^K \nabla f_i(\u_1^{i-1})}^2\right]+ \frac{2 \sigma_J^2 K^2  L_f^{2K-2}}{B_1}   \sum_{t=1}^T\alpha_{t+1} +\frac{2 K}{B_1}  \LL_J^2L_f^{2K-2} \sum_{t=1}^T \frac{1}{\alpha_{t+1}}\sum_{i=1}^{K} \E\left[\Norm{\u_t^{i-1} - \u_{t-1}^{i-1}}^2\right]
\end{align*}
Note that  $1 / \alpha_{t+1}-1 / \alpha_{t}=t^{1 / 2}-(t-1)^{1 / 2} \leq 1 / 2$ and for $t \geq 2$, $\sum_{t=1}^{T} t^{-2 / 3} \leqslant 3 T^{1 / 3}+1 $.
As a result, we have that 
\begin{align*}
    &\sum_{t=1}^T \E\left[\Norm{\v_t -\prod_{i=1}^K \nabla f_i(\u_t^{i-1})}^2\right] \leq \\
    & 4\E\left[\Norm{\v_1 -\prod_{i=1}^K \nabla f_i(\u_1^{i-1})}^2\right]+ \frac{16 \sigma_J^2 K^2  L_f^{2K-2} }{B_1}T^{1/3}    +\frac{4 K}{B_1}  \LL_J^2L_f^{2K-2} \sum_{t=1}^T \frac{1}{\alpha_{t+1}}\sum_{i=1}^{K} \E\left[\Norm{\u_t^{i-1} - \u_{t-1}^{i-1}}^2\right]
\end{align*}
\end{proof}
\begin{lemma}\label{lem:4} The inner function estimator $\u_t$ ensures that:
\begin{align*}
	\sum_{t=1}^T \sum_{i=1}^K \E\left[\Norm{\u_t^i -  f_i(\u_t^{i-1})}^2\right] \leq &\frac{\sum_{i=1}^K \E\left[\Norm{\u_1^i -  f_i(\u_1^{i-1})}^2\right]}{\alpha} + \frac{2 \alpha K T  \sigma^2}{B_1}  + \frac{2\LL_f^2}{\alpha B_1}  \sum_{t=1}^T \sum_{i=1}^K\E\left[\Norm{\u_{t}^{i-1} - \u_{t-1}^{i-1}}^2\right]
\end{align*}
\end{lemma}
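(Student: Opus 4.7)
The plan is to mirror the proof of Lemma~\ref{lem:3}, but level-by-level on the single-function residual $\u_t^i - f_i(\u_t^{i-1})$ rather than on the product of Jacobians. Denoting the mini-batch average $\bar f_i(\x) \coloneqq \frac{1}{B_1}\sum_{j=1}^{B_1} f_i(\x;\xi_t^{i,j})$ and plugging the STORM update~\eqref{ss1} into the residual, I would first derive the identity
\[
\u_t^i - f_i(\u_t^{i-1}) = (1-\alpha)\bigl(\u_{t-1}^i - f_i(\u_{t-1}^{i-1})\bigr) + \alpha\bigl(\bar f_i(\u_{t-1}^{i-1}) - f_i(\u_{t-1}^{i-1})\bigr) + R_t^i,
\]
where $R_t^i \coloneqq \bigl(\bar f_i(\u_t^{i-1}) - f_i(\u_t^{i-1})\bigr) - \bigl(\bar f_i(\u_{t-1}^{i-1}) - f_i(\u_{t-1}^{i-1})\bigr)$. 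This is the standard STORM-style decomposition: a contracting term that is measurable before the current batch, plus two zero-mean stochastic terms built from the same mini-batch.

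Next I would exploit conditional independence. Conditioned on the $\sigma$-field generated just before the batch $\{\xi_t^{i,j}\}_{j=1}^{B_1}$ is drawn, the iterates $\u_{t-1}^i$, $\u_{t-1}^{i-1}$, and $\u_t^{i-1}$ are all measurable, while both the $\alpha$-weighted term and $R_t^i$ have zero conditional mean. The cross terms between the measurable leading part and the two zero-mean parts therefore vanish in expectation. Using $(1-\alpha)^2 \leq 1-\alpha$ on the contracting term and $\|b+c\|^2 \leq 2\|b\|^2 + 2\|c\|^2$ on the zero-mean block yields the one-step recursion
\[
\E\bigl[\|\u_t^i - f_i(\u_t^{i-1})\|^2\bigr] \leq (1-\alpha)\E\bigl[\|\u_{t-1}^i - f_i(\u_{t-1}^{i-1})\|^2\bigr] + 2\alpha^2 \E\bigl[\|\bar f_i(\u_{t-1}^{i-1}) - f_i(\u_{t-1}^{i-1})\|^2\bigr] + 2\E\bigl[\|R_t^i\|^2\bigr].
\]
The mini-batch variance bound from Assumption~\ref{asm:stochastic2} gives $\E[\|\bar f_i(\u_{t-1}^{i-1}) - f_i(\u_{t-1}^{i-1})\|^2] \leq \sigma^2/B_1$, and writing $R_t^i$ as an average of $B_1$ independent mean-zero increments and applying the sample-wise average-smoothness bound from Assumption~\ref{asm:stoc_smooth3} gives $\E[\|R_t^i\|^2] \leq \LL_f^2\,\E[\|\u_t^{i-1}-\u_{t-1}^{i-1}\|^2]/B_1$.

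Finally I would telescope the one-step inequality. Summing over $t=1,\dots,T$ and moving the $(1-\alpha)$ sum onto the left-hand side leaves a factor of $\alpha$, so dividing through by $\alpha$ produces the initial-error term $\E[\|\u_1^i - f_i(\u_1^{i-1})\|^2]/\alpha$, the noise term $2\alpha T \sigma^2/B_1$, and the drift term $(2\LL_f^2/(\alpha B_1))\sum_t \E[\|\u_t^{i-1}-\u_{t-1}^{i-1}\|^2]$. Summing the resulting inequality over the levels $i=1,\dots,K$ yields exactly the stated bound.

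The only step that requires real care is the conditioning argument that kills the cross terms, since both zero-mean pieces share the same batch $\{\xi_t^{i,j}\}$; once the correct filtration is identified this is immediate, and the rest of the argument is routine and strictly simpler than Lemma~\ref{lem:3} because each level decouples and no product-of-Jacobians telescope is needed.
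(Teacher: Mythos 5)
Your proposal is correct and follows essentially the same route as the paper: the identity you derive is exactly the paper's STORM decomposition of the residual (written with the two batch-noise differences grouped into $R_t^i$), the conditioning argument killing the cross terms, the $(1-\alpha)^2\le 1-\alpha$ and $\|b+c\|^2\le 2\|b\|^2+2\|c\|^2$ steps, the $\sigma^2/B_1$ and $\LL_f^2\|\u_t^{i-1}-\u_{t-1}^{i-1}\|^2/B_1$ bounds, and the telescoping-then-divide-by-$\alpha$ conclusion all match the paper's proof. No gaps.
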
	
\begin{proof}
Since $\u_t^i = \frac{1}{B_1}\sum_{j=1}^{B_1}f_i(\u_t^{i-1};\xi_t^{i,j}) + (1-\alpha)(\u_{t-1}^i - \frac{1}{B_1}\sum_{j=1}^{B_1}f_i(\u_{t-1}^{i-1};\xi_t^{i,j}))$, we have:
\begin{align*}
& \E\left[\Norm{f_i(\u_t^{i-1}) - \u_t^i}^2\right]\\
=& \E\left[\left\|(1-\alpha)\left(\u_{t-1}^i - f_i(\u_{t-1}^{i-1})\right) +  \frac{\alpha}{B_1}\sum_{j=1}^{B_1} \left(  f_i(\u_{t-1}^{i-1};\xi_t^{i,j}) -  f_i(\u_{t-1}^{i-1})\right) \right.\right.\\
&\quad\quad\quad \left.\left.   +  f_i(\u_{t-1}^{i-1}) - f_i(\u_t^{i-1})  -\frac{1}{B_1}\sum_{j=1}^{B_1} \left(f_i(\u_{t-1}^{i-1};\xi_t^{i,j}) -  f_i(\u_t^{i-1};\xi_t^{i,j}) \right) \right\|^2\right] \\
\leq & (1-\alpha)^2 \E \Norm{\u_{t-1}^i - f_i(\u_{t-1}^{i-1})}^2 + \frac{2\alpha^2}{B_1} \E\left[\Norm{f_i(\u_{t-1}^{i-1};\xi_t^{i,j}) - f_i(\u_{t-1}^{i-1})}^2\right] \\
& \qquad  +2\E\left[\left\|\frac{1}{B_1}\sum_{j=1}^{B_1}\left(f_i(\u_{t-1}^{i-1};\xi_t^{i,j}) - f_i(\u_t^{i-1};\xi_t^{i,j} )- f_i(\u_{t-1}^{i-1}) + f_i(\u_t^{i-1})\right)\right\|^2\right]
\end{align*}
where the last inequality is due to: $\E\left[f_i(\u_{t-1}^{i-1})   - f_i(\u_t^{i-1})  -  \frac{1}{B_1}\sum_{j=1}^{B_1}\left( f_i(\u_{t-1}^{i-1};\xi_t^{i,j}) - f_i(\u_t^{i-1};\xi_t^{i,j})\right)\right] = 0 $, as well as $\E\left[\frac{\alpha}{B_1}\sum_{j=1}^{B_1} \left(  f_i(\u_{t-1}^{i-1};\xi_t^{i,j}) -  f_i(\u_{t-1}^{i-1})\right)\right]=0$. 

Also, since we know the fact that $\E\left[f_i(\u_{t-1}^{i-1})   - f_i(\u_t^{i-1})  -  f_i(\u_{t-1}^{i-1};\xi_t^{i,j}) - f_i(\u_t^{i-1};\xi_t^{i,j})\right] = 0 $, we have:
\begin{align*}
&\E\left[\left\|\frac{1}{B_1}\sum_{j=1}^{B_1}\left(f_i(\u_{t-1}^{i-1};\xi_t^{i,j})  - f_i(\u_t^{i-1};\xi_t^{i,j} )- f_i(\u_{t-1}^{i-1}) + f_i(\u_t^{i-1})\right)\right\|^2\right]\\
\leq &  \frac{1}{B_1^2}\sum_{j=1}^{B_1}\E\left[\left\|\left(f_i(\u_{t-1}^{i-1};\xi_t^{i,j})  - f_i(\u_{t-1}^{i-1}) + f_i(\u_t^{i-1}) - f_i(\u_t^{i-1};\xi_t^{i,j})\right)\right\|^2\right]\\
\leq & \frac{1}{B_1^2}\sum_{j=1}^{B_1}\E\left[\left\|\left(f_i(\u_{t-1}^{i-1};\xi_t^{i,j})  -f_i(\u_t^{i-1};\xi_t^{i,j})\right)\right\|^2\right]\\
\leq &  \frac{1}{B_1}\E\left[\left\|f_i(\u_{t-1}^{i-1};\xi_t^{i,j})  -f_i(\u_t^{i-1};\xi_t^{i,j})\right\|^2\right]\\
\leq &  \frac{1}{B_1}\LL_f^2 \left\|\u_{t-1}^{i-1} - \u_t^{i-1}\right\|^2.
\end{align*}
As a result, we can conclude that:
\begin{align*}
\E\left[\Norm{f_i(\u_t^{i-1}) - \u_t^i}^2\right] \leq&  (1-\alpha) \E \Norm{\u_{t-1}^i - f_i(\u_{t-1}^{i-1})}^2 + \frac{2\alpha^2 \sigma^2}{B_1} + \frac{2}{B_1}\LL_f^2 \left\|\u_{t-1}^{i-1} - \u_t^{i-1}\right\|^2  
\end{align*}
This leads to the fact that:
\begin{align*}
\sum_{i=1}^K \E\left[\Norm{f_i(\u_t^{i-1}) - \u_t^i}^2\right] \leq&  (1-\alpha) \sum_{i=1}^K \E \Norm{\u_{t-1}^i - f_i(\u_{t-1}^{i-1})}^2 + \frac{2\alpha^2 \sigma^2 K}{B_1} + \frac{2}{B_1}\LL_f^2 \sum_{i=1}^K \left\|\u_{t-1}^{i-1} - \u_t^{i-1}\right\|^2  
\end{align*}
By summing up and rearranging, we can get:
\begin{align*}
	\sum_{t=1}^T \sum_{i=1}^K \E\left[\Norm{\u_t^i -  f_i(\u_t^{i-1})}^2\right] \leq &\frac{\sum_{i=1}^K \E\left[\Norm{\u_1^i -  f_i(\u_1^{i-1})}^2\right]}{\alpha} + \frac{2 \alpha K T  \sigma^2}{B_1} + \frac{2\LL_f^2}{\alpha B_1}  \sum_{t=1}^T \sum_{i=1}^K\E\left[\Norm{\u_{t}^{i-1} - \u_{t-1}^{i-1}}^2\right],
\end{align*}
which finishes the proof.
\end{proof}


Then, we try to bound the term $\sum_{i=1}^K\E\left[\Norm{\u_{t+1}^{i-1} - \u_{t}^{i-1}}^2\right]$.
\begin{lemma}
\begin{align*}
\sum_{i=1}^K  \E\left[\Norm{\u_{t+1}^{i-1} - \u_{t}^{i-1}}^2 \right]  
 \leq&  \left(\sum_{i=1}^{K}\left(2\LL_f^2\right)^{i-1}\right) \left(\E \left[\eta^2 \Norm{\z_t - \x_t}^2\right] + \frac{2\alpha^2\sigma^2 K}{B_1}   + 2\alpha^2K \sum_{i=1}^{K} \E\left[\Norm{\u_t^i - f_i(\u_t^{i-1})}^2\right]  \right) 
\end{align*}
\end{lemma}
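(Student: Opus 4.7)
The plan is to derive a one-step recursion of the form $D_i \le 2\LL_f^2\, D_{i-1} + (\text{noise/bias terms})$ for $D_i := \E[\Norm{\u_{t+1}^i-\u_t^i}^2]$ and then unroll it across the $K$ levels. The base case is immediate: since $\u_t^0 = \x_t$ and the algorithm's update gives $\x_{t+1}-\x_t = \eta(\z_t-\x_t)$, we have $D_0 = \eta^2\,\E[\Norm{\z_t-\x_t}^2]$, which supplies the first leading term on the right-hand side of the lemma.

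For the recursion at level $i\ge 1$, substituting the STORM update~(\ref{ss1}) yields the decomposition $\u_{t+1}^i - \u_t^i = A + \alpha B$, where $A := \tfrac{1}{B_1}\sum_{j}[f_i(\u_{t+1}^{i-1};\xi_{t+1}^{i,j}) - f_i(\u_t^{i-1};\xi_{t+1}^{i,j})]$ captures the effect of shifting the inner variable and $B := \tfrac{1}{B_1}\sum_{j} f_i(\u_t^{i-1};\xi_{t+1}^{i,j}) - \u_t^i$ is the STORM correction. Applying $\Norm{A+\alpha B}^2 \le 2\Norm{A}^2 + 2\alpha^2\Norm{B}^2$ and using the fact that the samples $\xi_{t+1}^{i,j}$ are drawn fresh and independently of $\u_{t+1}^{i-1}, \u_t^{i-1}, \u_t^i$ (Assumption~\ref{asm:stochastic2}), the average-smoothness bound (Assumption~\ref{asm:stoc_smooth3}) together with Jensen's inequality gives $\E[\Norm{A}^2] \le \LL_f^2\, D_{i-1}$. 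For $B$, I would split it as $[\tfrac{1}{B_1}\sum_j f_i(\u_t^{i-1};\xi_{t+1}^{i,j}) - f_i(\u_t^{i-1})] + [f_i(\u_t^{i-1}) - \u_t^i]$; the first piece is mean-zero conditional on the history with variance at most $\sigma^2/B_1$, while the second piece is history-measurable, so the cross-term vanishes and $\E[\Norm{B}^2] \le \sigma^2/B_1 + E_i$, where $E_i := \E[\Norm{\u_t^i - f_i(\u_t^{i-1})}^2]$. Combining, this gives the sought recursion $D_i \le 2\LL_f^2\, D_{i-1} + \tfrac{2\alpha^2\sigma^2}{B_1} + 2\alpha^2 E_i$.

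Iterating the recursion yields $D_i \le (2\LL_f^2)^i D_0 + \sum_{j=1}^i(2\LL_f^2)^{i-j}\bigl(\tfrac{2\alpha^2\sigma^2}{B_1} + 2\alpha^2 E_j\bigr)$. Summing over $i$ from $0$ to $K-1$ and swapping the order of summation, I would bound the inner geometric tail $\sum_{k=0}^{K-1-j}(2\LL_f^2)^k$ by the full geometric sum $\sum_{i=1}^K(2\LL_f^2)^{i-1}$, and then use the crude bounds $\sum_{j=1}^{K-1}\tfrac{2\alpha^2\sigma^2}{B_1} \le K\cdot \tfrac{2\alpha^2\sigma^2}{B_1}$ and $\sum_{j=1}^{K-1} E_j \le K\sum_{j=1}^K E_j$ to arrive at exactly the form stated in the lemma, with the common prefactor $\sum_{i=1}^K(2\LL_f^2)^{i-1}$ multiplying the initial term $\E[\eta^2\Norm{\z_t-\x_t}^2]$, the noise term $2\alpha^2\sigma^2 K/B_1$, and the bias term $2\alpha^2 K\sum_{i=1}^K E_i$.

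The main obstacle is carefully tracking the conditional-independence structure at step $t+1$: the samples $\xi_{t+1}^{i,j}$ must be identified as drawn \emph{after} $\u_t^i$ and $\u_{t+1}^{i-1}$ are fixed but \emph{independently} of the samples used at other levels. This independence is what makes the cross-term in $\Norm{B}^2$ vanish and allows the substitution $\E_{\xi}[\Norm{f_i(\u_{t+1}^{i-1};\xi) - f_i(\u_t^{i-1};\xi)}^2] \le \LL_f^2\Norm{\u_{t+1}^{i-1}-\u_t^{i-1}}^2$ under the outer expectation, which is precisely what closes the geometric recursion in $i$ with the clean factor $2\LL_f^2$ and no mixing across levels.
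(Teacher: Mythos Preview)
Your proposal is correct and follows essentially the same approach as the paper: both derive the identical recursion $D_i \le 2\LL_f^2 D_{i-1} + \tfrac{2\alpha^2\sigma^2}{B_1} + 2\alpha^2 E_i$ from the STORM update (your $A+\alpha B$ split is just a regrouping of the paper's three-term decomposition), then unroll across levels and sum with the same crude bounds to extract the common geometric prefactor. The only cosmetic difference is that the paper indexes the recursion by $Q^i=\E[\Norm{\u_{t+1}^{i-1}-\u_t^{i-1}}^2]$ rather than your $D_i=\E[\Norm{\u_{t+1}^{i}-\u_t^{i}}^2]$.
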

\begin{proof}
We discuss the following two cases, separately.
\begin{itemize}
	\item[1.] For the first level, i.e., $i=1$, we have: 
    \begin{align*}
	    \E\left[\Norm{\u_{t+1}^{i-1} - \u_{t}^{i-1}}^2\right] = \E\left[\Norm{\x_{t+1} - \x_{t}}^2\right] = \E \left[\eta^2 \Norm{\z_t - \x_t}^2\right].
	\end{align*}
	\item[2.] For other levels, i.e., $2\leq i\leq K$, we have:
	\begin{align*}
		& \E\left[\Norm{\u_{t+1}^{i-1} - \u_{t}^{i-1}}^2\right]\\
		=&\E\left[\left\|\alpha \left(f_{i-1}(\u_{t}^{i-2}) - \u_{t}^{i-1}\right) + \frac{1}{B_1}\sum_{j=1}^{B_1}(f_{i-1}(\u_{t+1}^{i-2};\xi_{t+1}^{i-1}) - f_{i-1}(\u_{t}^{i-2};\xi_{t+1}^{i-1}))\right.\right.\\
		 &\quad \left.\left.+ \alpha \left( \frac{1}{B_1}\sum_{j=1}^{B_1}f_{i-1} (\u_{t}^{i-2};\xi_{t+1}^{i-1})-f_{i-1}(\u_{t}^{i-2})\right) \right\|^2 \right]\\
\leq& 2\E\left[\Norm{\alpha\left(f_{i-1}(\u_{t}^{i-2}) - \u_{t}^{i-1}\right) + \alpha \left(\frac{1}{B_1}\sum_{j=1}^{B_1}f_{i-1} (\u_{t}^{i-2};\xi_{t+1}^{i-1})-f_{i-1}(\u_{t}^{i-2})\right)}^2\right] \\
& \qquad + 2\LL_f^2\E\left[\Norm{\u_{t+1}^{i-2} - \u_{t}^{i-2}}^2 \right]\\
	\leq&  2\alpha^2 \Norm{f_{i-1}(\u_{t}^{i-2}) - \u_{t}^{i-1}}^2 + \frac{2\alpha^2 \sigma^2}{B_1} + 2\LL_f^2\E\left[\Norm{\u_{t+1}^{i-2} - \u_{t}^{i-2}}^2 \right].
	\end{align*} 
\end{itemize}

Denote $\Upsilon_t^i =\E\left[\Norm{\u_t^i - f_i(\u_t^{i-1})}^2\right]$ and  $Q^{i} = \E\left[\Norm{\u_{t+1}^{i-1} - \u_{t}^{i-1}}^2\right]$, we have $Q^{i} \leq 2\LL_f^2 Q^{i-1} + 2\alpha^2 \Upsilon_t^{i-1} + \frac{2\alpha^2\sigma^2}{B_1}$ for $i\geq 2$. Then we can get:
\begin{align*}
Q^{1} &\leq \E \left[\eta^2 \Norm{\z_t - \x_t}^2\right] \\
Q^{2} &\leq \left(2\LL_f^2\right) \E \left[\eta^2 \Norm{\z_t - \x_t}^2\right] +  \frac{2\alpha^2\sigma^2}{B_1} +2\alpha^2 \Upsilon_t^{1} \\
Q^{3} &\leq \left(2\LL_f^2\right)^2 \E \left[\eta^2 \Norm{\z_t - \x_t}^2\right]  +  \frac{2\alpha^2\sigma^2 \left(1+2\LL_f^2\right)}{B_1} + 2\alpha^2\left( 2\LL_f^2  \Upsilon_t^{1}+ \Upsilon_t^{2}\right) \\
& \cdots\\
Q^{i} &\leq \left(2\LL_f^2\right)^{i-1} \E \left[\eta^2 \Norm{\z_t - \x_t}^2\right] + \frac{2\alpha ^2\sigma^2}{B_1} \sum_{j=1}^{i-1}\left(2\LL_f^2\right)^{j-1}  + 2\alpha^2 \sum_{j=1}^{i-1} \left(2\LL_f^2\right)^{i-1-j} \Upsilon_t^{j}\\
&\leq \left(2\LL_f^2\right)^{i-1} \E \left[\eta^2 \Norm{\z_t - \x_t}^2\right] + \frac{2\alpha^2\sigma^2}{B_1} \sum_{j=1}^{K}\left(2\LL_f^2\right)^{j-1}  + 2\alpha^2 \sum_{j=1}^{K} \sum_{l=1}^{K} \left(2L_f^2\right)^{K-l} \Upsilon_t^{j}.
\end{align*}
When summing up, we have:
\begin{align*}
    \sum_{i=1}^{K} Q^{i} &\leq \sum_{i=1}^{K}\left(2\LL_f^2\right)^{i-1} \E \left[\eta^2 \Norm{\z_t - \x_t}^2\right] + \frac{2\alpha^2\sigma^2 K}{B_1} \sum_{i=1}^{K}\left(2\LL_f^2\right)^{i-1} + 2\alpha^2 K \sum_{j=1}^{K} \sum_{l=1}^{K} \left(2L_f^2\right)^{K-l} \Upsilon_t^{j}  \\
    &\leq \left(\sum_{i=1}^{K}\left(2\LL_f^2\right)^{i-1}\right) \left(\E \left[\eta^2 \Norm{\z_t - \x_t}^2\right] + \frac{2\alpha^2\sigma^2 K}{B_1}   + 2\alpha^2K \sum_{i=1}^{K}  \Upsilon_t^{i}\right)
\end{align*}
So we have :
\begin{align*}
  \sum_{i=1}^K \E\left[\Norm{\u_{t+1}^{i-1} - \u_{t}^{i-1}}^2 \right]  
 \leq&  \left(\sum_{i=1}^{K}\left(2\LL_f^2\right)^{i-1}\right) \left(\E \left[\eta^2 \Norm{\z_t - \x_t}^2\right] + \frac{2\alpha^2\sigma^2 K}{B_1}   + 2\alpha^2K \sum_{i=1}^{K} \E\left[\Norm{\u_t^i - f_i(\u_t^{i-1})}^2\right]  \right) 
\end{align*}
\end{proof}
\begin{lemma} \label{lem:66}
Denote that the constants as $L_1 = \left(2K \LL_J^2 L_f^{2K-2} + 4\LL_f^2\right)\left(\sum_{i=1}^{K}\left(2\LL_f^2\right)^{i-1}\right)$, $L_2 = \max\{ 2,2KL_F^2\}$, $L_3 =  2L_2(2K\sigma_J^2L_f^{2K-2} + 4K \sigma^2 + 2L_1 \sigma^2 K + L_1 D^2 + L_1) $, we can ensure that:
\begin{align*}
	\sum_{t=1}^T \E\left[\Norm{\nabla F(\x_t) - \v_t }^2\right] \leq \frac{L_3}{\alpha B_0} + \frac{\alpha L_3 T}{B_1} + \frac{L_3\eta^2 T}{\alpha B_1}.
\end{align*}
\end{lemma}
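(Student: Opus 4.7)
The plan is to chain Lemmas~\ref{lem:2}, \ref{lem:3}, \ref{lem:4} together with the one-step drift bound stated immediately before Lemma~\ref{lem:66}, so as to telescope $\sum_t \E\|\nabla F(\x_t)-\v_t\|^2$ down to an initial-error piece, a pure STORM-noise piece, and a piece driven by the iterate displacements. First I would invoke Lemma~\ref{lem:2} to split the target into the ``outer'' gradient-tracking error $\sum_t \E\|\v_t-\prod_{i=1}^K \nabla f_i(\u_t^{i-1})\|^2$ and the ``inner'' function-tracking error $\sum_t \sum_i \E\|\u_t^i-f_i(\u_t^{i-1})\|^2$, pulling out the common factor $L_2=\max\{2,2KL_F^2\}$.

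Next I would apply Lemma~\ref{lem:3} to the outer term and Lemma~\ref{lem:4} to the inner term. Each of these produces three pieces: an initial-error piece $E_{\cdot,1}/\alpha$, a STORM noise piece of the form $\alpha T \sigma^2/B_1$ (respectively $\alpha T K^2 \sigma_J^2 L_f^{2K-2}/B_1$), and a ``drift'' piece $\tfrac{C}{\alpha B_1}\sum_t\sum_i\E\|\u_t^{i-1}-\u_{t-1}^{i-1}\|^2$ with $C\in\{2K\LL_J^2 L_f^{2K-2},\,2\LL_f^2\}$. Since both drift pieces share the same summation, I collect them into one drift term with combined constant $2K\LL_J^2 L_f^{2K-2}+2\LL_f^2$.

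The drift summation is then bounded by summing the one-step lemma above Lemma~\ref{lem:66} over $t$. Using Assumption~\ref{asm:0} to replace $\|\z_t-\x_t\|$ by $D$ turns the $\eta^2\|\z_t-\x_t\|^2$ contribution into $\eta^2 D^2 T$, while the other two contributions are $2\alpha^2 \sigma^2 K T/B_1$ and $2\alpha^2 K \sum_t\sum_i \E\|\u_t^i-f_i(\u_t^{i-1})\|^2$, each scaled by $C_K := \sum_{i=1}^K (2\LL_f^2)^{i-1}$. Substituting back into the inner-tracking bound yields a bound on $\sum_t\sum_i\E\|\u_t^i-f_i(\u_t^{i-1})\|^2$ in terms of itself, with self-referential coefficient $\tfrac{4\alpha \LL_f^2 K C_K}{B_1}$; the constant $L_1=(2K\LL_J^2 L_f^{2K-2}+4\LL_f^2)C_K$ in the statement is exactly this combined prefactor.

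Finally, for $\alpha$ small enough (which the parameter choices in Theorems~\ref{thm:main}--\ref{thm:4_0} all satisfy) the self-referential coefficient is at most $1/2$ and can be absorbed into the left-hand side. The initial errors $\E\|\v_1-\prod_i \nabla f_i(\u_1^{i-1})\|^2$ and $\sum_i\E\|\u_1^i-f_i(\u_1^{i-1})\|^2$ are controlled by an $\mathcal{O}(1/B_0)$ bound from the fresh initialization batch (for $\v_1$ this uses a telescoping argument identical to the one already appearing in the proof of Lemma~\ref{lem:3}), giving the $L_3/(\alpha B_0)$ term. Bundling every remaining multiplicative factor ($K$, Lipschitz parameters, $\sigma^2$, $\sigma_J^2$, $D^2$, and the geometric sum $C_K$) into $L_3$ produces the stated bound. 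The main obstacle is precisely the circular coupling: the one-step drift lemma expresses $\sum_t\sum_i\|\u_t^{i-1}-\u_{t-1}^{i-1}\|^2$ in terms of $\sum_t\sum_i \E\|\u_t^i-f_i(\u_t^{i-1})\|^2$, while Lemma~\ref{lem:4} expresses the latter in terms of the former; untangling this is what dictates the exact form of $L_1$ and the implicit smallness requirement on $\alpha/B_1$.
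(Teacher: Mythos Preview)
Your proposal is correct and follows essentially the same route as the paper: split via Lemma~\ref{lem:2}, apply Lemmas~\ref{lem:3} and~\ref{lem:4}, feed in the one-step drift lemma (with $\|\z_t-\x_t\|\le D$), and absorb the resulting self-referential inner-error term under a smallness condition on $\alpha/B_1$, then use the $B_0$-batch initialization to control the $t=1$ errors. The only cosmetic difference is that the paper carries $2\times$ the inner sum on the left (so the self-referential coefficient only needs to be $\le 1$, via $2\alpha L_1 K\le B_1$) rather than keeping coefficient $1$ and requiring the self-referential coefficient $\le 1/2$ as you do; this is also why the paper's $L_1$ has a $+\,4\LL_f^2$ where your combined drift constant has $+\,2\LL_f^2$, but the discrepancy is immaterial since everything is ultimately absorbed into $L_3$.
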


\begin{proof}

Based on Lemma \ref{lem:2}, we have:
\begin{align*}
\sum_{t=1}^T \E\left[\Norm{\nabla F(\x_t) - \v_t }^2\right] \leq & 2 \sum_{t=1}^T\E\left[\Norm{\v_t - \prod_{i=1}^K \nabla f_i(\u_t^{i-1}) }^2\right]\quad + 2KL_F^2 \sum_{t=1}^T \sum_{i=1}^{K-1} \E\left[\Norm{\u_t^i - f_i(\u_t^{i-1})}^2\right].
\end{align*}
Noting that $\E\left[\Norm{\v_1 -\prod_{i=1}^K \nabla f_i(\u_1^{i-1})}^2\right] \leq \frac{K^2 \sigma_J^2 L_f^{2K-2}}{B_0}$, $\sum_{i=1}^K \E\left[\Norm{\u_t^i -  f_i(\u_t^{i-1})}^2\right] \leq \frac{K \sigma^2}{B_0}$ and setting $2 \alpha L_1 K \leq B_1$, we can deduce that:
\begin{align*}
 & \sum_{t=1}^T\E\left[\Norm{\v_t - \prod_{i=1}^K \nabla f_i(\u_t^{i-1}) }^2\right] + 2\sum_{t=1}^T \sum_{i=1}^{K-1} \E\left[\Norm{\u_t^i - f_i(\u_t^{i-1})}^2\right]\\
 \leq & \frac{K^2 \sigma_J^2 L_f^{2K-2} + 2K \sigma^2}{\alpha B_0} + \frac{2\alpha K^2 T\sigma_J^2L_f^{2K-2} + 4 \alpha K T  \sigma^2}{B_1} +\frac{2K\LL_J^2L_f^{2K-2}+4\LL_f^2}{\alpha B_1}\sum_{t=1}^T \sum_{i=1}^{K} \E\left[\Norm{\u_t^{i-1} - \u_{t-1}^{i-1}}^2\right]\\
\leq & \frac{K^2 \sigma_J^2 L_f^{2K-2} + 2K \sigma^2}{\alpha B_0} + \frac{2\alpha K^2 T\sigma_J^2L_f^{2K-2} + 4 \alpha K T  \sigma^2}{B_1} \\
 & +\frac{L_1}{\alpha B_1} \left( T \eta^2 D^2 + \frac{2T\alpha^2\sigma^2 K}{B_1}   + 2\alpha^2K \sum_{t=1}^T \sum_{i=1}^{K} \E\left[\Norm{\u_t^i - f_i(\u_t^{i-1})}^2\right]  \right) \\
 \leq & \frac{K^2 \sigma_J^2 L_f^{2K-2} + 2K \sigma^2}{\alpha B_0} + \frac{2\alpha K^2 T\sigma_J^2L_f^{2K-2} + 4 \alpha K T  \sigma^2}{B_1}  + \frac{L_1\eta^2D^2 T}{\alpha B_1} + \frac{2\alpha L_1 \sigma^2 KT}{B_1^2}\\
 & + \sum_{t=1}^T \sum_{i=1}^{K}\E\left[\Norm{\u_t^i - f_i(\u_t^{i-1})}^2\right] 
\end{align*}

So, we have that:
\begin{align*}
 & \sum_{t=1}^T \E\left[\Norm{\nabla F(\x_t) - \v_t }^2\right] \\
 \leq & L_2 \sum_{t=1}^T  \E\left[\Norm{\v_t - \prod_{i=1}^K \nabla f_i(\u_t^{i-1}) }^2\right] + L_2 \sum_{t=1}^T \sum_{i=1}^{K-1} \E\left[\Norm{\u_t^i - f_i(\u_t^{i-1})}^2\right]\\
\leq & L_2 \left(\frac{K^2 \sigma_J^2 L_f^{2K-2} + 2K \sigma^2}{\alpha B_0} + \frac{2\alpha K^2 T\sigma_J^2L_f^{2K-2} + 4 \alpha K T  \sigma^2}{B_1}  + \frac{L_1\eta^2D^2 T}{\alpha B_1} + \frac{2\alpha L_1 \sigma^2 KT}{B_1^2} \right) \\
\leq &  \frac{L_3}{\alpha B_0} + \frac{\alpha L_3 T}{B_1} + \frac{L_3\eta^2 T}{\alpha B_1}
\end{align*}
\end{proof}	

Now we can finish the proof as follows. Denote the Frank-Wolfe Gap as $\F(\x):=\max _{\hat{\x} \in \X}\langle \hat{\x}-\x,-\nabla F(\x)\rangle$ and $\z_t^{\star} = \arg \max _{\hat{\x} \in \X}\langle \hat{\x}-\x,-\nabla F(\x)\rangle$.

\begin{align*} 
F\left(\x_{t+1}\right) 
& \leq F\left(\x_t\right)+\left\langle\nabla F\left(\x_t\right), \x_{t+1}-\x_t\right\rangle+\frac{L_F}{2}\left\|\x_{t+1}-\x_{t}\right\|^{2} \\ 
& =F\left(\x_{t}\right)+\eta\left\langle \v_{t}, \z_{t}-\x_{t}\right\rangle+\eta\left\langle\nabla F\left(\x_{t}\right)-\v_t, \z_t-\x_t\right\rangle+\eta^{2} \frac{L_F}{2} D^{2} \\ 
& \leq F\left(\x_{t}\right)+\eta\left\langle \v_t, \z^{\star}_{t}-\x_{t}\right\rangle+\eta\left\langle\nabla F\left(\x_{t}\right)-\v_{t}, \z_{t}-\x_{t}\right\rangle+\eta^{2} \frac{L_F}{2} D^{2} \\ 
& =F\left(\x_t\right)+\eta\left\langle\nabla F\left(\x_t\right), \z^{\star}_{t}-\x_{t}\right\rangle+\eta\left\langle\nabla F\left(\x_{t}\right)-\v_{t}, \z_t-\z^{\star}_{t}\right\rangle+\eta^{2} \frac{L_F}{2} D^{2} \\ 
& \leq F\left(\x_{t}\right)-\eta \F\left(\x_{t}\right)+\eta D\left\|\nabla F\left(\x_t\right)-\v_t\right\|+\eta^{2} \frac{L_F}{2} D^{2}
\end{align*}
That is to say:
\begin{align*} 
\quad \frac{1}{T}\sum_{t=1}^{T}  \F\left(\x_{t}\right)& \leq \frac{F\left(\x_{1}\right)-F(\x_{T+1})}{\eta T}+\frac{D}{T} \sum_{t=1}^T \left\|\nabla F\left(\x_t\right)-\v_t\right\|+\eta \frac{L_F}{2} D^{2} 
\end{align*} 

By setting $T = \mathcal{O} \left(\epsilon^{-2}\right)$, $\eta =\mathcal{O} \left(\epsilon\right), \alpha = \mathcal{O} \left(\epsilon\right)$, $B_0 = B_1 = \mathcal{O} \left(\epsilon^{-1}\right)$, we can ensure that $\F(\x) \leq \epsilon$. Moreover, we can obtain the same guarantee by setting that $\alpha =\mathcal{O}\left( \epsilon^2\right), \eta = \mathcal{O}\left( \epsilon^2\right), B_1 =  \mathcal{O}\left(1 \right), B_0 = \mathcal{O} \left(\epsilon^{-1}\right), T = \mathcal{O}\left( \epsilon^{-3} \right)$.

\section{Proof of Gradient Mapping~(Theorem~\ref{thm:2} and Theorem~\ref{thm:2_0})}
In the previous analysis of Lemma~\ref{lem:66}, we simply reduce ${\eta^2}\Norm{\z_t - \x_t}^{2} \leq \eta^2 D^2$. To obtain the optimal rate, we have to keep this term. That is to say, we rewrite the Lemma~\ref{lem:66} as follows
\begin{align*}
			\frac{1}{T} \sum_{t=1}^{T} \E\left[\Norm{\nabla F(\x_t) - \v_t }^2\right] \leq \frac{L_3}{\alpha B_0 T}+ \frac{\alpha L_3}{B_1}+  \frac{\eta^2 L_3} {\alpha B_1T} \sum_{t=1}^{T}\Norm{\z_t - \x_t}^{2}.
\end{align*}

According to Proposition 2 of~\citet{NEURIPS2022_7e16384b}, we know that the gradient mapping 
\begin{align*}
  \left\|\mathcal{G}(\x_t, \beta)\right\|^{2} \leq -4 \beta g(\x_t, \v_t)+2\E\left[\Norm{\nabla F(\x_t) - \v_t }^2\right],  
\end{align*}  
where $g(\x_t, \v_t)=\min _{\y \in \X}\left\{\langle \v_t, \y-\x \rangle+\frac{\beta}{2}\|\y-\x_t\|^{2}\right\}$.

Due to the convergence of Frank-Wolfe algorithm~\cite{pmlr-v28-jaggi13}, we know that 
\begin{align*}
    \langle \v_t, \z_t-\x \rangle+\frac{\beta}{2}\|\z_t-\x_t\|^{2} \leq g(\x_t, \v_t) + \frac{2 \beta D^2}{N+2},
\end{align*}
which is widely used in the analysis of Frank-Wolfe algorithm~\cite{NEURIPS2022_7e16384b, Zhang2019OneSS,AAAI:2021:Wan:B,AAAI:2021:Wan:C}.
Now, we begin our proof
\begin{align*} 
&F\left(\x_{t+1}\right) \\
& \leq F\left(\x_t\right)+\left\langle\nabla F\left(\x_t\right), \x_{t+1}-\x_t\right\rangle+\frac{L_F}{2}\left\|\x_{t+1}-\x_{t}\right\|^{2} \\ 
& \leq F\left(\x_{t}\right)+\eta\left\langle\nabla F\left(\x_{t}\right), \z_{t}-\x_{t}\right\rangle+\eta^{2} \frac{L_F}{2} \Norm{\z_t - \x_t}^{2} \\ 
& =F\left(\x_{t}\right)+\eta\left\langle \v_{t}, \z_{t}-\x_{t}\right\rangle+\eta\left\langle\nabla F\left(\x_{t}\right)-\v_t, \z_t-\x_t\right\rangle+\eta^{2} \frac{L_F}{2} \Norm{\z_t - \x_t}^{2} \\ 
& =F\left(\x_{t}\right)+\eta\left\langle \v_{t}, \z_{t}-\x_{t}\right\rangle+\frac{\eta\beta}{2}\|\z_t-\x_t\|^{2}+\eta\left\langle\nabla F\left(\x_{t}\right)-\v_t, \z_t-\x_t\right\rangle - \frac{\eta\beta}{2}\|\z_t-\x_t\|^{2} +\eta^{2} \frac{L_F}{2} \Norm{\z_t - \x_t}^{2}
\end{align*}    
Denote $\y^{\star}= \min _{\y \in \X}\left\{\langle \v_t, \y-\x \rangle+\frac{\beta}{2}\|\y-\x_t\|^{2}\right\}$. Set  $\eta \leq \frac{\beta}{2 L_F}$~(note that $\beta$ is a positive constant), and then we have, 
\begin{align*} 
F\left(\x_{t+1}\right) 
& \leq F\left(\x_{t}\right)+\eta\left\langle \v_{t}, \y^{\star}-\x_{t}\right\rangle+\frac{\eta\beta}{2}\|\y^{\star}-\x_t\|^{2}+\frac{2 \eta \beta D^2}{N+2}\\
&\quad +\eta\left\langle\nabla F\left(\x_{t}\right)-\v_t, \z_t-\x_t\right\rangle - \frac{\eta\beta}{2}\|\z_t-\x_t\|^{2} +\eta^{2} \frac{L_F}{2} \Norm{\z_t - \x_t}^{2} \\
& \leq F(\x_t) + \eta g(\x_t, \v_t) + \frac{2\beta D^2  \eta}{N+2} +\eta\left\langle\nabla F\left(\x_{t}\right)-\v_t, \z_t-\x_t\right\rangle - \frac{\eta\beta}{2}\|\z_t-\x_t\|^{2} \\
& \qquad + \eta^{2} \frac{L_F}{2} \Norm{\z_t - \x_t}^{2} \\
& \leq F(\x_t) + \eta g(\x_t, \v_t) + \frac{2\beta D^2 \eta}{N+2} +\eta\left\langle\nabla F\left(\x_{t}\right)-\v_t, \z_t-\x_t\right\rangle - \frac{\eta\beta}{4}\|\z_t-\x_t\|^{2} \\
& \leq F(\x_t) + \eta g(\x_t, \v_t) + \frac{2\beta D^2  \eta}{N+2} +\frac{2\eta}{\beta} \E\left[\Norm{\nabla F(\x_t) - \v_t }^2\right] - \frac{\eta\beta}{8}\|\z_t-\x_t\|^{2}
\end{align*}    
As a result,
\begin{align*} 
-g(\x_t, \v_t)
& \leq \frac{F(\x_t) - F\left(\x_{t+1}\right)}{\eta}  + \frac{2\beta D^2 }{N+2} +\frac{2}{\beta} \E\left[\Norm{\nabla F(\x_t) - \v_t }^2\right] - \frac{\beta}{8}\|\z_t-\x_t\|^{2}
\end{align*}
So, we have:
\begin{align*}
  &\left\|\mathcal{G}(\x_t, \beta)\right\|^{2} \\
  \leq &-4 \beta g(\x_t, \v_t)+2\E\left[\Norm{\nabla F(\x_t) - \v_t }^2\right]  \\
  \leq &\frac{4\beta (F(\x_t) - F\left(\x_{t+1}\right))}{\eta}  + \frac{8\beta^2 D^2 }{N+2} + 10 \E\left[\Norm{\nabla F(\x_t) - \v_t }^2\right] - \frac{\beta^2}{2}\|\z_t-\x_t\|^{2}
\end{align*}  
Finally, we have
\begin{align*}
  &\quad \frac{1}{T}\sum_{t=1}^{T}\left\|\mathcal{G}(\x_t, \beta)\right\|^{2}   \\
  & \leq \frac{4\beta \Delta_F}{\eta T}  + \frac{8\beta^2 D^2 }{N} + \frac{10}{T}\sum_{t=1}^{T}\E\left[\Norm{\nabla F(\x_t) - \v_t }^2\right] - \frac{\beta^2}{2} \frac{1}{T}\sum_{t=1}^{T}\|\z_t-\x_t\|^{2} \\
  & \leq \frac{4\beta \Delta_F}{\eta T}  + \frac{8\beta^2 D^2 }{N} + \frac{10L_3}{ \alpha B_0 T}+ \frac{10 L_3\alpha}{B_1} + \left( 10 L_3 \frac{\eta^2} {\alpha B_1} -\frac{\beta^2}{2}\right)\frac{1}{T}  \sum_{t=1}^{T}\Norm{\z_t - \x_t}^{2} \\
  & \leq \frac{4\beta \Delta_F}{\eta T}  + \frac{8\beta^2 D^2 }{N} + \frac{10L_3}{ \alpha B_0 T}+ \frac{10 L_3\alpha}{B_1}
\end{align*}  
The last inequality holds with $\eta \leq \sqrt{\frac{\beta^2 \alpha B_1}{20 L_3}}$.
By setting $\alpha = \mathcal{O}(\sqrt{\epsilon})$, $\eta = \mathcal{O}(1)$, $T=\mathcal{O}(\epsilon^{-1})$, $B_0 = \mathcal{O}(\epsilon^{-0.5}), B_1= \mathcal{O}(\epsilon^{-0.5})$, and $N = \mathcal{O}(\epsilon^{-1})$, We can ensure that $\E\left[\Norm{\G(\x_t,\beta)}^2\right]\leq \epsilon$. This guarantee can also be satisfied by setting $\alpha =\mathcal{O}\left( \epsilon\right), \eta = \mathcal{O}\left( \sqrt{\epsilon}\right),T = \mathcal{O}\left( \epsilon^{-1.5} \right), B_0 = \mathcal{O}(\epsilon^{-0.5}), B_1 =  \mathcal{O}\left(1 \right), N = \mathcal{O}(\epsilon^{-1})$.

\section{Proof of Optimal Gap~(Theorem~\ref{thm:3},~\ref{thm:3_0},~\ref{thm:4} and \ref{thm:4_0})}
In this section, we investigate the optimal gap for convex and strongly convex objective functions.
\subsection{Convex}
According to the equation~(C.21) of \citet{pmlr-v97-yurtsever19b}, for algorithm 1 with convex objectives, we have that
\begin{align*}
\mathbb{E}\left[F\left(\x_{t+1}\right)\right]-F_{\star} \leq\left(1-\eta\right)\left(\mathbb{E}\left[F\left(\x_{t}\right)\right]-F_{\star}\right)+\eta D \mathbb{E}\left\|\nabla F\left(\x_{t}\right)-\v_{t}\right\|+\eta^{2}\frac{L_F D^2}{2}  
\end{align*}
Then we have:
\begin{align*}
 &\frac{1}{T}\sum_{t=1}^T \mathbb{E}\left[F\left(\x_{t+1}\right)\right]-F_{\star} \\
 \leq &\frac{\left(\mathbb{E}\left[F\left(\x_{1}\right)\right]-F_{\star}\right)}{\eta T}+\frac{ D}{T} \sum_{t=1}^T \mathbb{E}\left\|\nabla F\left(\x_{t}\right)-\v_{t}\right\|+\eta \frac{L_F D^2}{2} \\
  \leq & \frac{\left(\mathbb{E}\left[F\left(\x_{1}\right)\right]-F_{\star}\right)}{\eta T}+D\sqrt{\frac{ 1}{T} \sum_{t=1}^T \mathbb{E}\left\|\nabla F\left(\x_{t}\right)-\v_{t}\right\|^2}+\eta \frac{L_F D^2}{2} \\
  \leq & \frac{\left(\mathbb{E}\left[F\left(\x_{1}\right)\right]-F_{\star}\right)}{\eta T}+D\sqrt{\frac{\E\left[L_2\Norm{\v_1 -\prod_{i=1}^K \nabla f_i(\u_1^{i-1})}^2\right] + 2L_2\sum_{i=1}^K \E\left[\Norm{\u_1^i -  f_i(\u_1^{i-1})}^2\right] }{\alpha T}}\\
  & +D\sqrt{\frac{\alpha L_3}{B_1} + \frac{\eta^2  L_3}{\alpha B_1}} +\eta \frac{L_F D^2}{2} 
\end{align*}
Next, we denote that $\Gamma_s = \frac{1}{T}\sum_{t=1}^{T} \Norm{\v_t -\prod_{i=1}^K \nabla f_i(\u_t^{i-1})}^2 + \frac{2}{T}\sum_{t=1}^{T}\sum_{i=1}^K \Norm{\u_t^i -  f_i(\u_t^{i-1})}^2 $ for stage $s$ and we denote $\x^s$ as the output of Algorithm 3 for the stage $s$. Then, we have:
\begin{align*}
\mathbb{E}\left[F\left(\x^{s}\right)-F_{\star}\right] \leq \frac{\mathbb{E}\left[F\left(\x^{s-1}\right)-F_{\star}\right]}{\eta_s T_s}+D\sqrt{\frac{L_2 \Gamma_{s-1} }{\alpha_s  T_s}} +D\sqrt{\frac{\alpha_s L_3}{B_1^s} + \frac{\eta_s^2  L_3}{\alpha_s B_1^s}} +\eta_s \frac{L_F D^2}{2} 
\end{align*}
Also, we have that:
\begin{align*}
    \Gamma_{s} \leq&  \frac{2\Gamma_{s-1}}{\alpha_{s} T_{s}} + \frac{\alpha_{s} L_3}{B_1^{s}} + \frac{L_3 \eta_{s}^2}{\alpha_{s} B_1^{s}}
\end{align*}
Set $\epsilon_s = \left(\frac{1}{2}\right)^{s-1} $, $\eta_s = \alpha_s \leq \frac{\epsilon_s}{4L_F D^2}, T_s \geq \max\left\{\frac{(4\Delta_F+24+64D^2L_2)}{\eta_s}, \frac{\Gamma_0 (16D^2L_2+6)}{\eta_s}\right\},B_1^s \geq \max \left\{\frac{12\eta_s L_3}{\epsilon_s^2},\frac{128 L_3 \eta_s D^2}{\epsilon_s^2} \right\}$. We can guarantee that $\mathbb{E}\left[F\left(\x^{s}\right)-F_{\star}\right] \leq \epsilon_s$ and $\E\left[\Gamma_{s} \right] \leq \epsilon_s^2$.
We will use the  induction to give the proof:
\begin{proof}
When $s=1$, we have:
\begin{align*}
 \mathbb{E}\left[F\left(\x^{1}\right)-F_{\star}\right] & \leq \frac{\Delta_F}{\eta_1 T_1}+D\sqrt{\frac{L_2 \Gamma_{0} }{\alpha_1  T_1}} +D\sqrt{\frac{\alpha_1 L_3}{B_1^1} + \frac{\eta_1^2  L_3}{\alpha_1 B_1^1}} +\eta_1 \frac{L_F D^2}{2} \\
 & \leq \epsilon_1 =1 
\end{align*}
where $\Gamma_0 = K^2 \sigma_J^2 L_f^{2K-2} + 2K \sigma^2$. Also, we have that:
\begin{align*}
     \Gamma_1 &\leq \frac{2\Gamma_0}{\alpha_1 T_1} + \frac{\alpha_1 L_3}{B_1^1} + \frac{L_3 \eta_1^2}{\alpha_1 B_1^1}\leq  \epsilon_1^2 = 1 
 \end{align*}

Then, assume  $\mathbb{E}\left[F\left(\x^{s}\right)-F_{\star}\right] \leq \epsilon_s$ and $\mathbb{E}\left[\Gamma_s\right] \leq \epsilon_s^2$, we would prove that it holds for stage $s+1$.
\begin{align*}
& \mathbb{E}\left[F\left(\x^{s+1}\right)-F_{\star}\right]  \\
&\leq \frac{\epsilon_s}{\eta_{s+1} T_{s+1}}+D\sqrt{\frac{L_2 \epsilon_s^2 }{\alpha_{s+1}  T_{s+1}}} +D\sqrt{\frac{\alpha_{s+1} L_3}{B_1^{s+1}} + \frac{\eta_{s+1}^2  L_3}{\alpha_{s+1} B_1^{s+1}}} +\eta_{s+1} \frac{L_F D^2}{2} \\
&\leq \frac{\epsilon_s}{8}+\frac{\epsilon_s}{8}+\frac{\epsilon_s}{8}+\frac{\epsilon_s}{8} = \epsilon_{s+1}
\end{align*}
We also know that
\begin{align*}
    \Gamma_{s+1} \leq&  \frac{2\epsilon_s^2}{\alpha_{s+1} T_{s+1}} + \frac{\alpha_{s+1} L_3}{B_1^{s+1}} + \frac{L_3 \eta_{s+1}^2}{\alpha_{s+1} B_1^{s+1}}\\
    \leq & \frac{\epsilon_s^2}{12}+\frac{\epsilon_s^2}{12}+\frac{\epsilon_s^2}{12}
    =  \epsilon_{s+1}^2
\end{align*}

So we prove $\mathbb{E}\left[F\left(\x^{s}\right)-F_{\star}\right] \leq \left(\frac{1}{2}\right)^{s-1}$ with $\eta_s = \alpha_s \leq \frac{\epsilon_s}{4L_F D^2},B_1^s \geq \max \left\{\frac{12\eta_s L_3}{\epsilon_s^2},\frac{128 L_3 \eta_s D^2}{\epsilon_s^2} \right\}$, $T_s \geq \max\left\{\frac{(4\Delta_F+24+64D^2L_2)}{\eta_s}, \frac{\Gamma_0 (16D^2L_2+6)}{\eta_s}\right\}$. This condition can be satisfied by setting that $\eta_s = \mathcal{O} (\epsilon_s)$, $T_s = \mathcal{O}(\epsilon_s^{-1})$, $\alpha = \mathcal{O} (\epsilon_s)$ and $B_1^s = \mathcal{O} (\epsilon_s^{-1})$ [Large batch version]. This can also be achieved by setting that $\eta_s = \mathcal{O} (\epsilon_s^2)$, $T_s = \mathcal{O}(\epsilon_s^{-2})$, $\alpha = \mathcal{O} (\epsilon_s^2)$ and $B_1^s = \mathcal{O} (1)$ [Constant Batch].

To ensure $\mathbb{E}\left[F\left(\x^{s}\right)-F_{\star}\right] \leq \epsilon$, set $S=\mathcal{O}(\log_2 (\frac{1}{\epsilon}))$, and the SFO rate is $\sum_{s=1}^S T^s B_1^s = \mathcal{O}\left(\sum_{s=1}^S 2^{(2s)}\right) = \mathcal{O}\left(\frac{1}{\epsilon^2}\right)$. 
\end{proof}

\subsection{Strongly Convex}
In this section, we assume $F(\x)$ is $\lambda$-strongly convex function and we set $\eta \leq \frac{\lambda}{4L_F}$:

\begin{align*} 
&F\left(\x_{t+1}\right)\\ 
\leq & F\left(\x_t\right)+\left\langle\nabla F\left(\x_t\right), \x_{t+1}-\x_t\right\rangle+\frac{L_F}{2}\left\|\x_{t+1}-\x_{t}\right\|^{2} \\ 
\leq &F\left(\x_{t}\right)+\eta\left\langle\nabla F\left(\x_{t}\right), \z_{t}-\x_{t}\right\rangle+\eta^{2} \frac{L_F}{2} \Norm{\z_t - \x_t}^{2} \\ 
=&F\left(\x_{t}\right)+\eta\left\langle \v_{t}, \z_{t}-\x_{t}\right\rangle+\eta\left\langle\nabla F\left(\x_{t}\right)-\v_t, \z_t-\x_t\right\rangle + \frac{\eta \lambda}{4} \Norm{\z_t - \x_t}^{2} +(\eta^{2} \frac{L_F}{2} - \frac{\eta \lambda}{4})\Norm{\z_t - \x_t}^{2} \\ 
\leq & F(\x_t) + \eta \left\langle \nabla F(\x_t) - \v_t, \z_t - \x^{\star} \right\rangle + \eta \left\langle \nabla F(\x_t) , \x^{\star} - \x_t \right\rangle +  \frac{\eta\lambda}{4} \Norm{\x^{\star}-\x_t}^2 + \frac{\eta \lambda D^2}{N} - \frac{\eta \lambda}{8} \Norm{\z_t - \x_t}^2,
\end{align*}    
where the last inequality is due to the fact that
\begin{align*}
    \eta\left\langle \v_{t}, \z_{t}-\x_{t}\right\rangle + \frac{\eta \lambda}{4} \Norm{\z_t - \x_t}^{2}  \leq \eta\left\langle \v_{t}, \x^{\star}-\x_{t}\right\rangle + \frac{\eta \lambda}{4} \Norm{\x^{\star} - \x_t}^{2}  + \frac{\eta \lambda D^2}{N}
\end{align*}
For $\lambda$-strongly convex function, we have $\left\langle \nabla F(\x_t), \x^{\star} - \x_t \right\rangle \leq F_{\star} - F(\x_t) - \frac{\lambda}{2} \Norm{\x_t - \x^{\star}}^2$. As a result:

\begin{align*} 
F\left(\x_{t+1}\right) 
& \leq F(\x_t) + \eta (F_{\star} - F(\x_t)) - \frac{\eta \lambda}{4}\Norm{\x^{\star} - \x_t}^2 + \frac{\eta \lambda}{16} \Norm{\z_t - \x^{\star}}^2 + \frac{4\eta}{\lambda} \Norm{\nabla F(\x_t) - \v_t}^2\\
& \qquad + \frac{\eta \lambda D^2}{N} - \frac{\eta \lambda}{8} \Norm{\z_t - \x_t}^2
\end{align*}    

So we have:
$$F\left(\x_{t+1}\right)  - F_{\star} \leq (1-\eta) (F\left(\x_{t}\right) -F_{\star} ) +\frac{4\eta}{\lambda} \Norm{\nabla F(\x_t) - \v_t}^2 + \frac{\eta \lambda D^2}{N}$$

Finally, we have: 
\begin{align*}
    \frac{1}{T}\sum_{i=1}^T (F\left(\x_{t}\right) -F_{\star} ) & \leq \frac{F\left(\x_1\right) - F_{\star}}{\eta T} + \frac{4}{\lambda T}\sum_{t=1}^T \left\|\nabla F\left(\x_t\right)-\v_t\right\|^2+\frac{ \lambda D^2}{N}\\
\end{align*}
Next, we denote $\x^s$ as the output for stage $s$. Then, we have:
\begin{align*}
\mathbb{E}\left[F\left(\x^{s}\right)\right]-F_{\star} \leq \frac{F\left(\x_{s-1}\right) - F_{\star}}{\eta_s T_s} + \frac{4\Gamma_{s-1}}{\lambda \alpha_s T_s } + \frac{4 \alpha_s L_3}{\lambda B_1^s} + \frac{4 \eta^2_s  L_3}{\lambda \alpha_s B_1^s} +  \frac{\lambda D^2}{N}
\end{align*}
Also, we have that:
\begin{align*}
    \Gamma_{s} \leq&  \frac{2\Gamma_{s-1}}{\alpha_{s} T_{s}} + \frac{\alpha_{s} L_3}{B_1^{s}} + \frac{L_3 \eta_{s}^2}{\alpha_{s} B_1^{s}}
\end{align*}

Set that $\epsilon_s  = (\frac{1}{2})^{s-1},    B_1^s \geq  \frac{ 72 \alpha_s L_3 }{ \lambda \epsilon_s }, N \geq \frac{6\lambda D^2}{\epsilon_s}, T_s \geq \max \{ \frac{72(\Delta_F+1)}{\eta_s}, \frac{ 72 (D^2 +1)\Gamma_0}{\eta_s} \}$, $B_0 =\max \{ \lambda^{-1},1\}$. We can guarantee that $\mathbb{E}\left[F\left(\x^{s}\right)-F_{\star}\right] \leq \epsilon_s$ and $\E\left[\Gamma_{s} \right] \leq \lambda \epsilon_s$.
We will use the  induction to give the proof:

\begin{proof}
When $s=1$, we have: 
\begin{align*}
\Gamma_{1}  \leq & \frac{2\Gamma_{0}}{\alpha_{1} T_{1} B_0} + \frac{\alpha_{1} L_3}{B_1^{1}} + \frac{L_3 \eta_{1}^2}{\alpha_{1} B_1^{1}}
\leq   \lambda \epsilon_1 = \lambda ,
\end{align*}
where $B_0$ denotes the batch size used only in the first iteration of the first stage. Also, we have that:
\begin{align*}
 \mathbb{E}\left[F\left(\x^{1}\right)-F_{\star}\right] & \leq \frac{\Delta_F}{\eta_1 T_1}+\frac{4\Gamma_0}{\lambda \alpha_1 T_1 B_0} + \frac{4 \alpha_1 L_3}{\lambda B_1^1} + \frac{4 \eta^2_1  L_3}{\lambda \alpha_1 B_1^1} +  \frac{\lambda D^2}{N} \\
 & \leq \epsilon_1 =1 
\end{align*}

Then assume $\mathbb{E}\left[\Gamma_s\right] \leq \lambda \epsilon_{s}$ and $\mathbb{E}\left[F\left(\x^{s}\right)-F_{\star}\right] \leq \epsilon_s$, we would prove that it holds for stage $s+1$.
\begin{align*}
    \Gamma_{s+1} \leq&  \frac{2\Gamma_{s}}{\alpha_{s+1} T_{s+1}} + \frac{\alpha_{s+1} L_3}{B_1^{s+1}} + \frac{L_3 \eta_{s+1}^2}{\alpha_{s+1} B_1^{s+1}}\\
    \leq & \frac{2\lambda \epsilon_s}{\alpha_{s+1} T_{s+1}} + \frac{\alpha_{s+1} L_3}{B_1^{s+1}} + \frac{L_3 \eta_{s+1}^2}{\alpha_{s+1} B_1^{s+1}}\\
    \leq & \frac{\lambda \epsilon_s}{2} = \lambda \epsilon_{s+1}
\end{align*}
Besides, we know that
\begin{align*}
\mathbb{E}\left[F\left(\x^{s+1}\right)-F_{\star} \right]
& \leq \frac{\epsilon_s}{\eta_{s+1} T_{s}} + \frac{4\Gamma_{s}}{\lambda \alpha_{s+1} T_{s+1}} +  \frac{4 \alpha_{s+1} L_3}{\lambda  B_1^{s+1}} + \frac{4 \eta^2_{s+1}  L_3}{\lambda \alpha_{s+1} B_1^{s+1}} +  \frac{\lambda D^2}{N}\\
& \leq  \frac{\epsilon_{s}}{2} = \epsilon_{s+1}
\end{align*}
\end{proof}

So we prove $\mathbb{E}\left[F\left(\x^{s}\right)-F_{\star}\right] \leq \left(\frac{1}{2}\right)^{s-1}$ with $\epsilon_s  = (\frac{1}{2})^{s-1},    B_1^s \geq  \frac{ 72 \alpha_s L_3 }{ \lambda \epsilon_s }, N \geq \frac{6\lambda D^2}{\epsilon_s}, T_s \geq \frac{72(\Delta_F+1+\Gamma_0)}{\eta_s}$, $B_0 =\max \{ \lambda^{-1},1\}$.

This condition can be satisfied by setting that $\eta_s= \alpha_s = \mathcal{O} (\lambda)$, $T_s = \mathcal{O}(\lambda^{-1})$, $B_1 = \mathcal{O} (\epsilon_s^{-1}), N=\mathcal{O}(\frac{\lambda}{\epsilon_s})$  (Large Batch) or by setting that $\eta_s = \alpha_s= \mathcal{O} (\lambda \epsilon_s)$, $T_s = \mathcal{O}(\lambda^{-1}\epsilon_s^{-1})$, $B_1 = \mathcal{O} (1), N=\mathcal{O}(\frac{\lambda}{\epsilon_s})$  (Constant Batch). To ensure $\mathbb{E}\left[F\left(\x^{s}\right)-F_{\star}\right] \leq \epsilon$, set $S=\mathcal{O}(\log_2 (\frac{1}{\epsilon}))$, and the SFO rate is $\sum_{s=1}^S T_s B_1^s = \sum_{s=1}^S \frac{\mathcal{O}(1)}{\lambda} 2^{s-1} =\mathcal{O}(\frac{1}{\lambda \epsilon})$.

\end{document}